\newcommand{\R}{\mathbb{R}}
\newcommand{\N}{\mathbb{N}}
\mathchardef\emptyset="001F
\newtheorem{maintheorem}{Theorem}
\newtheorem{theorem}{Theorem}[section]
\newtheorem{lemma}[theorem]{Lemma}
\newtheorem{definition}[theorem]{Definition}
\newtheorem{proposition}[theorem]{Proposition}
\numberwithin{equation}{section}
\newcommand{\up}{\uparrow}
\newcommand{\down}{\downarrow}
\newcommand{\weaksto}{\rightharpoonup^*}
\newcommand{\QED}{\mbox{}\hfill\rule{5pt}{5pt}\medskip\par}
 \def\calE{{\mathcal E}} \def\calF{{\mathcal F}}
  \def\calI{{\mathcal I}}
  \def\calR{{\mathcal R}}
 \def\calT{{\mathcal T}} 
\def\calY{{\mathcal Y}} 
  \def\rmC{{\mathrm C}}
\def\rmD{{\mathrm D}}
\def\dd{\;\!\mathrm{d}} % differential for integration
\newcommand{\eps}{\varepsilon}
\newcommand{\teta}{\vartheta}
\newcommand{\foraa}{\text{for a.a.\ }}
\newcommand{\Xs}{X}
 \newcommand{\AC}{\mathrm{AC}}
   \newcommand{\BV}{\mathrm{BV}}
\newcommand{\mdn}{\mathsf{d}}
\newcommand{\corn}{\delta}
\newcommand{\md}[2]{\mathsf{d}(#1,#2)}
\newcommand{\ene}[2]{\mathcal{E}(#1,#2)}
\newcommand{\pertn}{\mathcal{F}}
\newcommand{\pert}[2]{\mathcal{F}(#1,#2)}
\newcommand{\perto}[1]{\mathcal{F}_0(#1)}
\newcommand{\pwn}{\mathcal{P}}
\newcommand{\pw}[2]{\mathcal{P}(#1,#2)}
\newcommand{\ds}[3]{{#1}_{#2}^{#3}}
\newcommand{\cmdn}{\mathsf{D}}
\newcommand{\cmd}[2]{\mathsf{D}(#1,#2)}
\newcommand{\corr}[2]{\delta(#1,#2)}
\newcommand{\corrn}[2]{\delta_n(#1,#2)}
\newcommand{\Vars}[3]{\mathrm{Var}_{#1}(#2,#3)}
\newcommand{\Vari}[4]{\mathrm{Var}_{#1}(#2,[#3,#4])}
\newcommand{\Varname}[1]{\mathrm{Var}_{#1}}
\newcommand{\Vf}[2]{V_{#1}(#2)}
\newcommand{\lli}[2]{{#1}({#2}{-})}
\newcommand{\rli}[2]{{#1}({#2}{+})}
\newcommand{\jump}[1]{\mathrm{J}_{#1}}
\newcommand{\tjump}{\widetilde{\mathrm{J}}}
\newcommand{\Jvar}[4]{\mathrm{Jmp}_{#1}(#2;[#3,#4])}
\newcommand{\mum}[2]{\nu_{#1}^{#2}}
\newcommand{\gcostname}{\mathsf{e}}
\newcommand{\gcost}[3]{\mathsf{e}(#1,#2,#3)}
\newcommand{\vecostname}{\mathsf{c}}
\newcommand{\vecost}[3]{\mathsf{c}(#1,#2,#3)}
\newcommand{\vecostnamep}[1]{\mathsf{c}_{#1}}
\newcommand{\bvcostname}{\mathsf{v}}
\newcommand{\bvcost}[3]{\mathsf{v}(#1,#2,#3)}
\newcommand{\idelta}[4]{\Delta_{#1}(#2,#3,#4)}
\newcommand{\aidelta}[5]{\Delta_{#1}(#2,#3,#4,#5)}
\newcommand{\RIS}{(\Xs,\calE,\mdn)}
\newcommand{\VE}{\mathrm{VE}}
\newcommand{\VEa}[1]{\mathrm{VE}_{#1}}
\newcommand{\VEn}{\VEa{\mu_n}}
\newcommand{\stab}[1]{\mathscr{S}_{#1}}
\newcommand{\stabi}[2]{\mathscr{S}_{#1}(#2)}
\newcommand{\rstab}[2]{\mathcal{R}(#1,#2)}
\newcommand{\slope}[3]{|\rmD {#1}|(#2,#3)}
\newcommand{\hole}[1]{\mathfrak{h}(#1)}
\newcommand{\tcost}[4]{\mathrm{Trc}_{#1}(#2,#3,#4)}
\newcommand{\tcostname}[1]{\mathrm{Trc}_{#1}}
\newcommand{\Gap}[3]{\mathrm{GapVar}_{#1}(#2,#3)}
\newcommand{\limE}{\mathsf{E}}
\newcommand{\limV}{\mathsf{V}}
\newcommand{\limP}{\mathsf{P}}
\newcommand{\subl}[1]{S_{#1}}
\newcommand{\nresc}[1]{\mathfrak{s}_{#1}}
\newcommand{\ninvresc}[1]{\mathfrak{t}_{#1}}
\newcommand{\invcur}[1]{\mathfrak{u}_{#1}}
\newcommand{\invcu}{\mathfrak{u}}
\newcommand{\newmresc}[1]{\mathfrak{m}_{#1}}
\newcommand{\rresc}[2]{\mathsf{\sigma}_{#1}^{#2}}
\newcommand{\siresc}[2]{\mathsf{\tau}_{#1}^{#2}}
\newcommand{\piecewiseConstant}[2]{\overline{#1}_{\kern-1pt#2}}
\newcommand{\pwc}{\piecewiseConstant}
\newcommand{\kliminf}{\mathrm{Li}}
\newcommand{\klimsup}{\mathrm{Ls}}
\definecolor{dmagenta}{rgb}{0.8,0,0.8}
\newcommand{\RRR}{\color{red}}
\newcommand{\EEE}{\color{black}}
\definecolor{vgreen}{rgb}{0.1,0.5,0.2}
\title[From Visco-Energetic to Energetic and $\BV$ solutions]{From Visco-Energetic to Energetic and Balanced Viscosity solutions of rate-independent systems}
\author{Riccarda Rossi}
\address{R.\ Rossi, DIMI, Universit\`a degli studi di Brescia, via Branze 38, 25133 Brescia - Italy}
\email{riccarda.rossi\,@\,unibs.it}
\author{Giuseppe Savar\'e}
\address{G.\ Savar\'e, Dipartimento di Matematica, Universit\`a degli studi di Pavia, via Ferrata 1, 27100 Pavia - Italy}
\email{giuseppe.savare\,@\,unipv.it}
\thanks{
  %the  ERC grant no.267802 AnaMultiScale.
 % R.R.\ and G.S.\ have been partially supported by a MIUR-PRIN'10-11 grant
  %for the project ``Calculus of Variations''.  
  R.R.\   acknowledges
  support from the  Gruppo Nazionale per  l'Analisi Matematica, la
  Probabilit\`a  e le loro Applicazioni (GNAMPA) of the Istituto Nazionale di Alta Matematica (INdAM)}
\date{April 04, 2017}
\begin{document}

%running title:
\begin{abstract} This  paper focuses on  weak solvability concepts for rate-independent systems in  a metric setting.
\emph{Visco-Energetic} solutions have been recently obtained by passing to the time-continuous limit in  a time-incremental scheme, akin to that for Energetic solutions, but perturbed by a `viscous' correction term, as in the case of Balanced Viscosity solutions. However, for Visco-Energetic solutions this viscous correction  is tuned by a \emph{fixed} parameter $\mu$. The resulting solution notion 
is characterized by a stability condition and an energy balance analogous to those for Energetic solutions, but, in addition, it provides a fine description of the system behavior at jumps as Balanced Viscosity solutions do. Visco-Energetic evolution can be thus thought as `in-between' Energetic and Balanced Viscosity evolution.
\par
Here
we aim to formalize this intermediate character of Visco-Energetic solutions by studying their singular limits as $\mu\down 0$ and $\mu\uparrow \infty$. We shall prove convergence to Energetic solutions in the former case, and to Balanced Viscosity solutions in the latter situation. 
\end{abstract}
\subjclass[2000]{Primary: 49Q20; Secondary: 58E99}
\keywords{Rate-Independent Systems, Energetic solutions, Balanced Viscosity solutions, 
Visco-Energetic solutions, time discretization,
 vanishing
viscosity, singular limits}

\maketitle

\centerline{\emph{Dedicated to Gianni Gilardi on the occasion of his 70th birthday}}

%%%%%%%%%%%%%%%%%%%%%%%%%%%%%%%%%%%%%%%
%%%%%%%%%%%%%%%%%%%%%%%%%%%%%%%%%%%%%%%

\section{Introduction}
A large class of  \emph{rate-independent systems} are driven by
 \begin{itemize}
 \item[-]
  a time-dependent energy functional $\calE: [0,T]\times \Xs \to (-\infty,\infty]$, with $[0,T]$ the time span during which the system is observed, and $\Xs$ the space of the states of the system, 
\item[-]
 a (positive) dissipation functional $\mathcal{D}: \Xs \times \Xs \to [0,\infty)$, keeping track of the energy dissipated by the curve $u:[0,T]\to \Xs$ describing the evolution of the system, %and with a `growth' property 
  that satisfies suitable structural properties 
 peculiar of rate-independence. 
%positively homogeneous of degree $1$.  
\end{itemize} 
%They occur as particular cases of Generalized Gradient Systems.
\par
When $\Xs$ is a (separable) Banach space, 
a natural class of dissipations is provided by translation invariant functionals of the form $\mathcal{D}
(u_1,u_2): = \Psi(u_2{-}u_1)$, where 
$\Psi: \Xs \to [0,\infty)$ is a  (convex, lower semicontinuous) dissipation potential, 
  positively homogeneous of degree $1$, namely $\Psi(\lambda v) = \lambda \Psi(v)$ for all $\lambda\geq0$ and $v\in \Xs$.
The evolution of the rate-independent system % $ (\Xs,\calE,\Psi)$ 
is governed by the   doubly nonlinear differential inclusion 
\begin{equation}
\label{DNE}
\partial \Psi(u'(t)) + \partial_u \calE(t,u(t)) \ni 0 \quad \text{in } \Xs^* \quad \foraa\, t \in (0,T),
\end{equation}
where 
 $\partial\Psi: X \rightrightarrows X^*$ is the subdifferential in the sense of convex analysis, while  $\partial_u \calE : [0,T] \times X \rightrightarrows X^*$ is a suitable notion of subdifferential  of $\calE$ w.r.t.\ the variable $u$. 
As it will be apparent from the forthcoming discussion,  in general \eqref{DNE} is only formally written. % the discussion in the next lines. %In \eqref{DNE}, 
%In fact, the balance equation \eqref{DNE} 
%between the dissipational friction forces in $\partial \Psi(u'(t))$ and the restoring forces in $ \partial_u \calE(t,u(t))$ 
%reflects a trade-off between the two competing mechanisms of energy conservation, and energy dissipation. 
\par
More generally, throughout
 this paper  we shall assume that 
 the dissipation $\mathcal{D}$ is induced by a distance $\mdn$ on 
 the space $\Xs$, such that
\begin{equation}
\label{met-sp}
\tag{$\mathrm{X}$}
\text{$(\Xs,\mdn)$ is a \emph{complete} metric space}.
\end{equation}
%and that $\Psi$ is of the form $\Psi = \psi(|u'|)$, with $|u'|: (0,T) \to [0,\infty)$ the (almost everywhere defined) \emph{metric derivative} of the curve $u$ induced by the metric $\mdn$, cf.\ \eqref{m-derivative} ahead, and $\psi: [0,\infty) \to [0,\infty)$  a  lower semicontinuous convex dissipation function fulfilling  $\psi(\lambda r) = \lambda \psi(r)$ for all $\lambda,\, r \in [0,\infty)$, which in fact gives $\psi(r) = r $. Therefore, the dissipation ultimately only depends on $\mdn$, and 
We will henceforth denote a (metric) rate-independent system by $\RIS$. 
%We shall thus denote the induced (metric)  Generalized Gradient System by $(\Xs,\calE,\mdn,\psi)$.
%We shall not enter here into the \emph{metric} formulation of the evolution equation governing  $(\Xs,\calE,\mdn,\psi)$, referring to \cite{RMS08}, which is in turn
 %based on the theory for metric gradient flows from \cite{AGS08}. Rather, while staying in the metric setting \eqref{met-sp},  we will  focus on the  time discretization of the evolutionary process, in the  \emph{rate-independent}  case. 
%\par
%Indeed, two distinguished cases arise according to the growth properties of the scalar function $\psi$:
%\begin{enumerate}
%\item 
%\emph{rate-dependent} evolution, if $\psi$ has superlinear growth at infinity, i.e.\ $\lim_{r\to\infty}\frac{\psi(r)}r =\infty$;
%\item
%\emph{rate-independent} evolution, corresponding to a $1$-positively homogeneous dissipation function, i.e.\ 
%
%\end{enumerate}
\par
Rate-independent evolution occurs in manifold  problems in physics and engineering, cf.\ \cite{Miel05ERIS,MieRouBOOK} for a survey. In addition to its wide range of applicability, over the last two decades the analysis of rate-independent systems has attracted considerable interest due to its intrinsic mathematical challenges: first and foremost, the quest of a proper solvability concept for the system $\RIS$. In fact, since the dissipation potential has linear growth at infinity, one can in general expect only $\BV$-time regularity for the curve $u$ (unless the energy functional is uniformly convex). Thus $u$ may  have jumps as a function of time. Therefore, the pointwise derivative $u'$ in the subdifferential inclusion \eqref{DNE} in the  Banach setting, and the metric  derivative $|u'|$ in the general metric setup \eqref{met-sp}, need not be defined. This calls for  a suitable weak formulation of rate-independent evolution, also able to satisfactorily capture the behavior of the system in the jump regime. 
\par
In what follows we illustrate the three solution concepts this paper is concerned with, referring to Sections \ref{s:2} and \ref{s:main-results} for more details and precise statements.
%%%
\subsection{Energetic, Balanced Viscosity, and Visco-Energetic solutions}
The pioneering papers \cite{MieThe99MMRI,MieThe04RIHM}  advanced the by now classical concept of (Global) \underline{\emph{Energetic}} solution to the rate-independent system $\RIS$ (cf.\ also  the notion of `quasistatic
evolution' in the realm of crack propagation, dating back to \cite{DM-Toa2002}), which can be in fact given in a more general topological setting \cite{MaiMie05EREM}. 
 It is a curve $u: [0,T]\to \Xs$ with bounded variation, complying
 for every $t\in [0,T]$  with 
 \begin{itemize}
 	\item[-] the global stability condition
 	\begin{equation}
 	\label{global-stab-intro}
 	\tag{$\mathrm{S}_{\mdn}$}
 	\ene t{u(t)} \leq \ene tv + \md{u(t)}{v} \quad \text{for every } v \in \Xs,
 	\end{equation}
 	\item[-]  the energy balance
 	\begin{equation}
 	\label{global-enbal-intro}
 	\tag{$\mathrm{E}_{\mdn}$}
 	\ene t{u(t)} + \Vari {\mdn}{u}0{t} = \ene 0{u(0)} +\int_0^t \partial_t \ene s{u(s)} \dd s \,.
 	\end{equation}
 \end{itemize}
Here, $ \Vari {\mdn}{u}0{t} $ denotes the (pointwise) total
variation of the curve $u$ induced by the metric $\mdn$, which is related to `energy dissipation': in fact, 
\eqref{global-enbal-intro}  balances the stored energy at the process time $t$ and the energy dissipated up to   $t$
with the initial energy and the work of the external loadings, encoded in the second integral on the right-hand side. 
%\emph{power functional} $\pw tu: = \partial_t \ene tu$.
 Existence results for Energetic solutions may be proved by resorting to a well understood time discretization procedure. Indeed, for every fixed partition $\calT_\tau: = \{ \ds t \tau 0=0< 
\ds t  \tau1 <\ldots<\ds t \tau{N-1}<\ds t \tau N=T\}$ of the interval 
$[0,T]$, with fineness $\tau: = \max_{i=1,\ldots,N} (\ds t \tau i {-} \ds t \tau{i-1})$, 
discrete solutions $(\ds Un\tau)_{n=1}^{N}$  are constructed as solutions of the time-incremental minimization scheme  
\begin{equation}
\label{tims-ris}
\tag{$\mathrm{IM}_\tau$}
\min_{U\in \Xs} \left( \ene{\ds t  \tau n}{U}+ \md{\ds U\tau{n-1}}{U} \right).
\end{equation}
Under suitable conditions it can be shown that, for every null sequence $(\tau_k)_k$, up to a subsequence the
  piecewise constant interpolants $(\pwc U{\tau_k})_k$ of the discrete solutions converge to  an Energetic solution.
  %%%
  While widely applied,  the Energetic concept has also been criticized on the grounds that the global stability condition 
  \eqref{global-stab-intro} is too strong a requirement, when dealing with nonconvex energies. To avoid violating it, the system may in fact have to 
change instantaneously in a very drastic way, jumping into very far-apart energetic
  configurations, possibly `too early'. In this connection, we refer to the discussions from  \cite[Ex.\ 6.3]{KnMiZa07?ILMC}, \cite[Ex.\ 6.1]{MRS09}, as well as to  \cite{RosSav12}, providing  a characterization of Energetic solutions to one-dimensional 
  rate-independent systems (i.e., with $\Xs=\R$), driven by a fairly broad class of nonconvex  energies. 
  In \cite{RosSav12}, the input-output relation associated with the Energetic concept is shown to be related to the so-called \emph{Maxwell rule} for hysteresis processes \cite{Visintin94}. 
  These features are also reflected in the jump conditions satisfied by an Energetic solution $u$ at every jump point $t
  \in \jump u$ ($\lli u t$, $\rli u t $ denoting the left/right limits of $u$ at $t$ and  $\jump u$ its jump set),  namely
  \begin{equation}
  \label{jump-intro-energetic}
  \md {\lli u t }{u(t)} = \ene t{\lli u t } -\ene t{u(t)}, \qquad \md {u(t)}{\rli u t} = \ene t{u(t)} - \ene t {\rli u t },
  \end{equation}
  which show the influence of the global energy landscape of $\calE$.
\par
The global stability  condition \eqref{global-stab-intro} in fact stems from the 
  global minimization problem \eqref{tims-ris}, whereas a scheme based on \emph{local} minimization would be preferable, cf.\ \cite{DT02MQGB} for a first discussion of this  in the realm of crack propagation,
  and \cite{EfeMie06RILS} in the frame of abstract (finite-dimensional) rate-independent systems. This localization can be achieved by perturbing the variational scheme \eqref{tims-ris}
 with
a term, modulated by a viscosity parameter $\eps$, which penalizes the squared distance from the previous step $\ds U\tau{n-1}$. One is thus led to consider the  time-incremental minimization   
\begin{equation}
\label{tims-bv}
\tag{$\mathrm{IM}_{\eps,\tau}$}
\min_{U\in \Xs} \left( \ene{\ds t \tau  n}{U}+ \md{\ds U\tau{n-1}}{U} + \frac{\eps}{2\tau}   \mdn^2(\ds U\tau{n-1},U)  \right),
\end{equation}
which may be considered as  a \emph{viscous} approximation of 
\eqref{tims-ris}.
For fixed $\eps>0$, the limit passage as $\tau\down 0$ in   \eqref{tims-bv} leads to solutions (of the metric formulation) of the
 Generalized Gradient System $(\Xs,\calE,\mdn,\psi_\eps)$, where the dissipation function $\psi_\eps: [0,\infty)\to[0,\infty)$ is   given by
\begin{equation}
\label{psi-eps-intro}
\psi_\eps(r) =r+\frac{\eps}2 r^2= \frac1{\eps} \psi(\eps r) \qquad \text{with}  \quad \psi(r) = r +\frac12 r^2\,.
\end{equation}
 We refer to \cite{RMS08} for existence results for gradient systems in metric spaces, driven by dissipation potentials with superlinear  growth at infinity like $\psi_\eps$.   
     In turn, it has been shown in \cite{MRS09}  (cf.\ also \cite{MRS-MJM})
     that, under suitable conditions on the energy functional, time-continuous solutions (to the metric formulation)  of  $(\Xs,\calE,\mdn,\psi_\eps)$  converge as $\eps \down 0$,  up to reparamerization, to a \underline{\emph{Balanced Viscosity}} ($\BV$)   solution of the rate-independent system
     $\RIS$. The latter is  a curve $u\in \BV([0,T];\Xs)$
     satisfying 
      \begin{itemize}
\item[-] the \emph{local} stability condition
\begin{equation}
\label{loc-stab-INTRO}
\tag{$\mathrm{S}_{\mdn,\mathrm{loc}}$}
\slope \calE t{u(t)} \leq 1 \quad \text{for every } t \in [0,T]\setminus \jump u,
\end{equation}
\item[-]  the energy balance
\begin{equation}
\label{BV-enbal-INTRO}
\tag{$\mathrm{E}_{\mdn,\bvcostname}$}
\ene t{u(t)} + \Vari {\mdn,\bvcostname}{u}0{t} = \ene 0{u(0)} +\int_0^t  \partial_t \ene{s}{u(s)}   \dd s \quad \text{for all } t \in [0,T]\,.
\end{equation}
\end{itemize}
Here, $|\rmD \calE|: [0,T]\times \Xs \to [0,\infty]$ is the  \emph{metric slope} of the energy functional $\calE$, namely
\begin{equation}
\label{slope-INTRO}
\slope \calE tu:  = \limsup_{v\to u} \frac{(\ene tu{-}\ene tv)_+}{\md uv}\,,
\end{equation}
and $\Varname {\mdn,\bvcostname} $ is a suitably \emph{augmented} notion of total variation, fulfilling $\Vari {\mdn,\bvcostname}{u}a{b} \geq 
 \Vari {\mdn}{u}a{b} $ for all  $[a,b]\subset [0,T]$, which measures the energy dissipated along  the jump, at a point  $t\in \jump u$, by means of the cost
 \begin{equation}
 \label{vcost-INTRO}
 \begin{aligned}
 \bvcost t{\lli u t }{\rli u t}: = \inf\Big\{
   & \int_{r_0}^{r_1}
|\teta'|(r) \left( \slope \calE t{\teta(r)} \vee 1 \right)  \dd r 
 % \int_{r_0}^{r_1} |\dot{\teta}|(r) \left( \slope \calE t{\teta(r)} \vee 1 \right) \dd r
  \, :
  \\
   & 
 \qquad 
   \teta \in \AC([r_0,r_1];\Xs),\, \ \teta(r_0)=\lli u t , \ \teta(r_1) = \rli u t\Big\}\,
   \end{aligned}
\end{equation}
that is reminiscent of the viscous approximation \eqref{tims-bv}. % cf.\ also the discussion  in Sec.\ \ref{ss:2.2} ahead. 
Indeed, it is possible to show (cf.\ \eqref{bv-jump-cond-INTR0}
ahead) that every $\BV$ solution to $\RIS$ complies with the jump conditions
\begin{equation}
\begin{aligned}
\label{bv-jump-cond-INTR0}
\ene t{\lli u t } - \ene t{\rli ut}  = \bvcost t{\lli ut}{\rli u t} =    \int_{r_0}^{r_1}
|\teta'|(r) \left( \slope \calE t{\teta(r)} \vee 1 \right)  \dd r  %  \teta \in \AC([r_0,r_1];\Xs) \text{ s.t. } \teta(r_0)=\lli u t , \ \teta(r_1) = \rli u t\,
\end{aligned}
 \end{equation}
at every jump point $t\in \jump u$, with $\teta$ an optimal jump transition between $\lli u t $ and $\rli u t$. Any optimal  transition can be decomposed into an (at most) countable collection of \emph{sliding} transitions, evolving in the rate-independent mode, and 
 \emph{viscous} transitions, i.e.\ (metric) solutions of the Generalized Gradient System $(\Xs,\calE,\mdn,\psi)$ with the superlinear $\psi$ from \eqref{psi-eps-intro},  and where the time variable in the energy functional is frozen at the jump time $t$. 
  Therefore, $\BV$ solutions account for 
 the onset of viscous behavior at jumps of the system, which can be in fact interpreted as fast transitions (possibly) governed by viscosity. The characterization in the one-dimensional case, with a nonconvex driving energy,  from \cite{RosSav12} reveals that the input-output relation underlying $\BV$  solutions follows the \emph{delay rule} 
 \cite{Visintin94}, as they tend to jump `as late as possible'.
 \par
 A notable feature of $\BV$ solutions is that they can be directly obtained as limits of the discrete solutions arising from the perturbed scheme \eqref{tims-bv}, when the parameters $\eps$ and $\tau$ \emph{jointly} tend to zero with convergence rates such that
 \begin{equation}
 \label{eps-tau-INTRO}
 \lim_{\eps,\tau \down 0}\frac{\eps}{\tau} = +\infty\,;
 \end{equation}
 the argument developed in  \cite{MRS12, MRS13}  in the Banach setting  can be in fact   easily   extended to the metric framework, cf.\ the discussion in Sec.\ 
 \ref{ss:2.3}.  This remarkable property has somehow inspired the approach in \cite{SavMin16}. There, a new notion of rate-independent evolution has been obtained in the time-continuous limit, as $\tau \down 0$, of the  perturbed
 time-incremental minimization scheme
\begin{equation}
\label{tims}
\tag{$\mathrm{IM}_{\mu}$}
\begin{gathered}
\min_{U\in \Xs} \left( \ene{\ds t  \tau n }{U}+ \md{\ds U\tau{n-1}}{U}  +   \frac{\mu}2 \mdn^2(\ds U\tau{n-1},U)
 \right) \quad \text{with }  \mu>0 \text{ a \emph{fixed} parameter.} 
%\\
%\text{with the `viscous' correction } \corr uv= \frac{\mu}2 \mdn^2(u,v) \text{ and } \mu>0 \text{ a \emph{fixed} parameter.}
\end{gathered}
\end{equation}
The analysis carried out in \cite{SavMin16} in fact covers a more general, topological setting,
  akin to that of \cite{MaiMie05EREM}, with a general viscous correction $ \corn: \Xs \times \Xs \to [0,\infty)$ compatible, in a suitable sense, with the metric $\mdn$:   a particular case is in fact $\corr u v =   \tfrac{\mu}2 \mdn^2(u,v)$ as in \eqref{tims}. 
  In the simplified metric setting of \eqref{met-sp}, under the same conditions ensuring the existence of Energetic solutions it is possible to show that the (piecewise constant interpolants of the) discrete solutions arising from \eqref{tims} converge, as $\tau\down 0$ and $\mu>0$ is fixed, to a   \underline{\emph{($\mu$-)Visco-Energetic}}  solution to the rate-independent system $\RIS$. 
  In what follows, we will simply speak of \emph{Visco-Energetic} ($\VE$) solutions, and often  highlight their dependence on the parameter $\mu$ in the acronym $\VEa{\mu}$. 
  A $\VEa{\mu}$ solution  is  a curve $u\in \BV([0,T];\Xs)$ complying with the 
  \begin{itemize}
  \item[-] `perturbed', still global, stability condition
  \begin{equation}
\label{stab-VE-INTRO}
\tag{$\mathrm{S}_{\cmdn}$}
\ene t{u(t)} \leq \ene tv + \md{u(t)}v + \frac\mu2 \mdn^2 (u(t),v)   \quad \text{for every } v \in \Xs \text{ and for every } t \in [0,T]\setminus \jump u,
\end{equation}
\item[-]  the energy balance
\begin{equation}
\label{enbal-VE-INTR0}
\tag{$\mathrm{E}_{\mdn,\vecostname}$}
\ene t{u(t)} + \Vari {\mdn,\vecostname}{u}0{t} = \ene 0{u(0)} +\int_0^t  \partial_t \ene s{u(s)}   \dd s \quad \text{for all } t \in [0,T]\,.
\end{equation}
\end{itemize}
Here, $\Varname {\mdn,\vecostname}$ is an alternative augmented total variation functional,
 again estimating the total variation induced by $\mdn$, but featuring  a different notion of jump dissipation cost. In analogy with \eqref{vcost-INTRO}, the visco-energetic cost  $\vecostname$  (we shall often write $\vecostnamep{\mu}$ to highlight its dependence on the parameter $\mu$, and accordingly write ($\mathrm{E}_{\mdn,\vecostnamep{\mu}}$)),  is still  obtained by minimizing a suitable transition cost 
$\tcostname{\VE}$ 
 over a class of continuous, but not necessarily absolutely continuous, curves $\teta: E \to \Xs$, with $E$ an arbitrary  compact subset of $\R$ having a possibly more complicated structure than that of an interval. 
The transition cost  $\tcostname{\VE}$  evaluates (1) the $\mdn$-total variation $\Vars{\mdn} \teta E $ of $\teta$ over $E$; (2) a quantity related to the ``gaps'' of the set $E$; (3) a quantity measuring the violation of the (global) stability condition \eqref{stab-VE} along the jump transition $\teta$, cf.\ 
 \cite{SavMin16}  and 
Sec.\ \ref{ss:2.2} ahead  for all details and precise formulae. In this context as well, it can be proved (cf.\ \cite[Prop.\ 3.8]{SavMin16}) that any $\VE$ solution $u$  satisfies at  its jump points $t\in \jump u $ the jump conditions
\begin{equation}
\begin{aligned}
\label{ve-jump-cond-INTR0}
\ene t{\lli u t } - \ene t{\rli ut}  = \vecost t{\lli ut}{\rli u t} =    \tcost{\VE} {t}{\teta}E
%  \teta \in \AC([r_0,r_1];\Xs) \text{ s.t. } \teta(r_0)=\lli u t , \ \teta(r_1) = \rli u t\,
\end{aligned}
 \end{equation}
 with $\teta: E \to \Xs$ an optimal transition curve between $\lli u t$ and $\rli u t$. Furthermore, 
 any optimal transition can be decomposed into an (at most countable) collection of \emph{sliding transitions}, parameterized by a continuous variable and fulfilling the stability condition \eqref{stab-VE-INTRO}, and \emph{pure jump transitions}, defined on discrete subsets of $E$, along which the stability  \eqref{stab-VE-INTRO} may be violated.
 %\footnote{\RRR ma non necessariamente, giusto? Ci possono essere delle transizioni discrete con punti 'sliding', giusto?}
  A notable property % of  optimal jump transitions 
 %$\teta$
 of  $\VE$ solutions is that, if an optimal jump transition $\teta:E \to \Xs$ at a jump point $t$ does not comply with the stability condition 
\eqref{stab-VE-INTRO} at some $s\in E$, then $s$ is isolated and, denoting by $s_-: = \max (E \cap ({-}\infty,s))$, there holds
\[
\teta(s) \in \mathrm{Argmin}_{y\in \Xs} \left\{ \ene ty + \md {\teta(s_-)}y+ \frac{\mu}2 \mdn^2(\teta(s_-),y)\right\}\,.
\]
A  complete characterization of $\VE$ solutions to one-dimensional rate-independent systems has been recently provided in 
\cite{Minotti16}, showing that their behavior strongly depends on the parameter $\mu$. When $\mu=0$, $\VE$ solutions coincide with Energetic solutions and therefore they satisfy the \emph{Maxwell rule}. For a sufficiently `strong' viscous correction, i.e.\ with $\mu$ above a certain threshold depending on the (nonconvex) driving energy, $\VE$ solutions exhibit a behavior akin to that of $\BV$ solutions, and follow the \emph{delay rule}. With a `weak' correction, $\VE$ solutions have an intermediate character between Energetic and $\BV$ solutions.
%%%%%
\subsection{Main results}
In this paper, we aim to
gain further insight into  this in-between quality of $\VE$ solutions and into the role of the 
tuning parameter $\mu$, revealed by the analysis in \cite{Minotti16}, in a more general context.  To this end, we shall study the singular limits
of  $\VEa{\mu}$  solutions to the (metric) rate-independent system $\RIS$
 as $\mu \down0$ and $\mu \uparrow \infty$. 
\par
With \textbf{Theorem \ref{th:1}} we will show that, any sequence $(u_n)_n$ of   $\VEn$ solutions corresponding to a null sequence $\mu_n \down 0$  
converges, up to a subsequence, to an Energetic solution of $\RIS$. 
\textbf{Theorem \ref{th:2}}  will address the behavior of a sequence $(u_n)_n$ of  $\VEn$  solutions with parameters $\mu_n \uparrow \infty$. In this case, in accordance with condition \eqref{eps-tau-INTRO}, we expect to obtain 
$\BV$ solutions. 
 We will prove indeed that, up to a subsequence, as $\mu_n\uparrow \infty$  $\VEn$ solutions converge  to a $\BV$ solution of $\RIS$.
\par
While referring to Sections \ref{s:3} and \ref{s:4} for further comments and all details, let us mention here that the proof of Thm.\ \ref{th:2} is quite challenging. In fact, it  involves passing 
from the  transitions that describe the jump behavior of a  sequence of  $\VEn$ solutions, and that are  given by a collection of `sliding
 pieces' and discrete trajectories, to the jump transitions for $\BV$ solutions, that  are instead \emph{absolutely continuous} curves. This can be achieved by  means of  a careful reparameterization technique, combined with a delicate compactness argument for transition curves in \emph{varying} domains.
 % revealed by the one-dimensional characterization in \cite{Minotti16}, 
%While these concepts can be defined in a more general setting, we shall confine the discussion to the metric framework \eqref{met-sep}, 
\paragraph{\bf Plan of the paper} In Section \ref{s:2} we collect some preliminary results, set up the basic assumptions on the energy functional $\calE$, and give the precise definitions of Energetic, Balanced Viscosity, and Visco-Energetic solutions to the rate-independent system $\RIS$. In Section \ref{s:main-results} we recapitulate the existence results for the three solution concepts, and state our own Theorems \ref{th:1} and \ref{th:2}, whose proof is developed throughout Sections \ref{s:3} and \ref{s:4}, also resorting to some auxiliary results stated and proved in the Appendix.
%%%%%
\section{Preliminary results and overview of the   solution concepts for rate-independent systems}
\label{s:2}
We start by fixing some notation:
Given an arbitrary subset $E\subset \R$, we shall denote by 
\begin{equation}
\label{notation-set}
\begin{aligned}
%&
\mathfrak{P}_f(E) \text{ the collection of all finite subsets of } E,
%\\
%&
\quad
E^-: = \inf E, \quad E^+: = \sup E\,.
\end{aligned}
\end{equation}
\paragraph{\bf Kuratowski convergence of sets} In view of the compactness argument developed in Section \ref{s:4} ahead,  here we  provide a minimal aside on the notion of  \emph{Kuratowski} convergence of  sets,  confining the discussion to \emph{closed} sets,  and 
referring to
\cite{Ambrosio-Tilli} for all details.
 %We recall (cf.\ e.g.\ \cite[Def.\
%4.4.9]{Ambrosio-Tilli}) that the \emph{Hausdorff distance} between
%two non-empty and closed subsets $C_1,\, C_2 \subset \Hilbert$ is
%defined by
%\begin{equation}
%\label{def-hdis} \hdi(C_1,C_2) : = \min\{1, h(C_1,C_2)\} \qquad
%\text{with } h(C_1,C_2):= \inf\{\rho>0\, : \ C_1 \subset B(C_2,\rho)
%\text{ and }  C_2 \subset B(C_1,\rho)\}\,.
%\end{equation}
%We shall often write $C_n \harrow C$ in place of $\hdi(C_n,C) \to
%0$.
 %The Blashcke Theorem 
 %From now on,  we will use the notation
%\begin{equation*}
% \label{not:closed}
%\clcv \text{ for the family of compact connected
%subsets of }\Hilbert.
%\end{equation*}
%In what follows,  we will also make use of a second notion of
%convergence between sets, due to  \emph{K.\ Kuratowski}. Again
We say that a
sequence $(C_n)_n$ of closed subsets of $\Xs$ converge in the
sense of Kuratowski to a closed set $C $,
%and write $C_n \karrow C$,
 if
\begin{equation}
\label{kur-conv}
{\kliminf}_{n\to \infty} C_n = {\klimsup}_{n\to\infty} C_n = C,
\end{equation}
where
\begin{subequations}
\begin{align}
&
\label{Lliminf}  {\kliminf}_{n\to\infty} C_n  := \{ x \in \Xs\, : 
 \ \exists\, x_n \in C_n \text{ such that } x_n \to x
\},
%\\
 %&
\\
& 
\label{Llimsup}
  {\klimsup}_{n\to\infty} C_n  := \{ x \in \Xs\, : \
\exists\, j \mapsto n_j \text{ increasing  and } x_{n_j} \in C_{n_j} \text{ such that } x_{n_j} \to  x  \}.
\end{align}
\end{subequations}
%Observe that $\kliminf_{n\to \infty} C_n \subset \klimsup_{n\to \infty} C_n $.
%Therefore,
%\eqref{kur-conv} is equivalent to the converse inclusion. Hence,
% $C_n \karrow C$ if and only if
%\begin{enumerate}
%\item if $x= \lim_{k \to \infty}x_{k}$ for some subsequence $(x_{n_k})_k \subset
%(x_n)_n$, with $x_n \in C_n$ for every $n$, then $x\in C$;
%\item if $x \in C$, then there exists a \emph{whole sequence}
%$(x_n)_n$ with $x_n \in C_n$ for every $n\in \N$, such that $x_n \to
%x$.
%\end{enumerate}
%In general, $C_n \harrow C$ implies $C_n \karrow C$. 
If all the
closed sets $C_n$ are contained in a compact set $K$, then  Kuratowski convergence coincides with the convergence induced by the 
Hausdorff distance
 \cite[Prop.\ 4.4.14]{Ambrosio-Tilli}. That is why, the   \emph{Blaschke} % \emph{Go\l ab} 
 Theorem  (cf., e.g., \cite[Thm.\
4.4.15]{Ambrosio-Tilli})  is   applicable, ensuring 
 % The former 
 %ensures 
 that, if $K \subset \Xs$ is a
fixed compact set, then every sequence of closed sets $(C_n)_n
\subset K$ admits a subsequence converging in the
Kuratowski sense to
a closed set $C \subset K$. If the sets $C_n$
are connected, then 
  $C$ is  also connected. 
%by the Go\l ab
%Theorem \cite[Thm.\ 4.4.17]{Ambrosio-Tilli},
%also 

\subsection{Preliminaries on functions of bounded variation and absolutely continuous functions}
\label{ss:2.1}
Let us first recall some preliminary definitions and properties related to functions of bounded variation with values in the metric space $(\Xs,\mdn)$. The \emph{pointwise} total variation $\Vars{\mdn}u{E}$ of a function $u:E \to \Xs$ is defined by
\begin{equation}
\label{def-tot-var}
\Vars{\mdn}u{E}: = \sup \left\{ \sum_{j=1}^M \md{u(t_{j-1})}{u(t_j)}\, : \ t_0<t_1<\ldots<t_M, \ \{ t_j\}_{j=0}^M \in \mathfrak{P}_f(E)\right\}\,,
\end{equation}
with 
$\Vars{\mdn}u{\emptyset}:=0$.
We define the space of functions with bounded variation via
\[
\BV_{\mdn}(E;X): = \{ u: E \to \Xs\, : \ \Vars{\mdn}u{E}<\infty\}\,.
\]
For every $u\in \BV_{\mdn}(E;X)$ we may introduce the function 
\begin{equation}
\label{def-variation-func}
V_u: [E^-,E^+]\to [0,\infty) \quad \text{given by} \quad
\Vf ut: = \Vars{\mdn}u{E \cap [ E^-,t]} \,.
\end{equation}
Observe that 
$V_u$ is monotone nondecreasing and satisfies
\[
\md{u(t_0)}{u(t_1)} \leq \Vari {\mdn}{u}{t_0}{t_1} = \Vf u{t_1}-  \Vf u{t_0} \quad \text{for all } t_0,t_1\in E \text{ with } t_0\leq t_1\,.
\]
\par
Since the metric space $(\Xs,\mdn)$ is complete, every function $u\in \BV_{\mdn}(E;X)$ is \emph{regulated}, i.e. at every $t\in E$ the left and right limits 
$\lli ut$ and $\rli ut$
exist (with obvious adjustments at $E^-$ and $E^+$). We recall that $u$ only has jump discontinuities, and that its (at most) \emph{countable} jump set $ \jump u$ coincides with the jump set of $V_u$. 
\par
 We will also consider the distributional derivative
 %\footnote{\RRR Cambiare il nome della misura, per evitare confusione con il parametro $\mu$?} 
 $\mum u{}$ of the function $V_u$ and recall that the Borel measure $\mum u {}$ can be decomposed into the sum
 \begin{equation}
 \label{decompose}
 \mum u{} =\mum u{\mathrm{d}} + \mum u{\mathrm{J}}  
 \end{equation}
with $\mum u{\mathrm{d}}$ the diffuse part of $\mum u{}$ (i.e.\ the sum of its absolutely continuous and Cantor parts), fulfilling $\mum u{\mathrm{d}}(\{t\})=0$ for every $t\in [E^-,E^+]$, and $\mum u{\mathrm{J}}$ its jump part, concentrated on the set $ \jump u$, so that 
\[
\mum u{\mathrm{J}}(\{t\}) = \md{\lli ut}{u(t)} +\md{u(t))}{\rli ut} \quad \text{for every } t \in \jump u.
\]
Therefore we have 
\begin{equation}
\label{Var-mu-Javr}
\Vari {\mdn}u{t_0}{t_1} = \mum u{\mathrm{d}}([t_0,t_1])+ \Jvar {\mdn}{u}{t_0}{t_1}
\end{equation}
for every interval
$[t_0,t_1]\subset E$,
with the jump contribution  
\begin{equation}
\label{d-jump-contribution}
 \Jvar {\mdn}{u}{t_0}{t_1}: =  \md{u(t_0))}{\rli u{t_0}} +  \md{\lli u{t_1})}{ u(t_1)} + \sum_{t\in \jump u \cap (t_0,t_1)} \left( \md{\lli ut}{u(t)} +\md{u(t)}{\rli ut} \right)\,.
\end{equation}
\par
In the definition of Balanced Viscosity and Visco-Energetic solutions, there will come into play an alternative notion of total variation for a curve $u\in \BV([0,T];\Xs)$, which will reflect the energetic behavior of the (Balanced Viscosity/Visco-Energetic) solution at jump points. 
It will  be obtained by suitably modifying  the jump contribution to the total variation induced by $\mdn$, cf.\ \eqref{Var-mu-Javr},  in terms of  a (general) \emph{cost function}
$\gcostname: [0,T] \times \Xs\times \Xs \to [0,\infty]$,  with $\gcostname \geq \mdn$, 
that shall measure the energy dissipated along a jump. Thus,   hereafter we will refer to $\gcostname$ as \emph{jump dissipation cost}.
 As particular cases of $\gcostname$, we will consider   
 \begin{itemize}
 \item[-] the 
 \emph{viscous (jump dissipation) cost}   $\bvcostname$, cf.\ \eqref{vcost} ahead, in the case of Balanced Viscosity solutions;
 \item[-]
  the \emph{visco-energetic (jump dissipation)  cost} $\vecostname$, cf.\ \eqref{vecost} ahead, in the case of Visco-Energetic solutions.
  \end{itemize}
  \par
With the jump dissipation cost $\gcostname$ we associate the  \emph{incremental cost}
\begin{equation}
\label{Delta-e}
\Delta_{\gcostname}: [0,T]\times \Xs \times \Xs \to [0,\infty], \quad \Delta_{\gcostname}(t,u_-,u_+): = \gcost  t{u_-}{u_+} - \md{u_-}{u_+}
 \text{ for all } t\in[0,T], \, u_-,\, u_+ \in \Xs,
\end{equation}
%\RRR spiegare la notazione $u_-,\, u_+$, left/right limits at a jump point.. \EEE
where the notation $u_-,\, u_+$  is suggestive of the fact that, in the definition of the total variation functional induced by $\gcostname$, the incremental cost will be evaluated at the left and right limits $\lli u t$ and $\rli u t$ at a jump point of a curve $u$. 
We will also use the notation
\[
\Delta_{\gcostname}(t,u_-,u,u_+): = \Delta_{\gcostname}(t,u_-,u) + \Delta_{\gcostname}(t,u,u_+)\,.
\]
We are now in a position to introduce the \emph{augmented total variation} functional induced by $\gcostname$.
\begin{definition}
\label{def-Var-e}
Given a (jump dissipation) cost function $\gcostname$ and the associated incremental cost $\Delta_{\gcostname}$, and   given a  curve $u\in \BV([0,T];\Xs)$, we define the \emph{incremental jump variation} of $u $ on a sub-interval $[t_0,t_1] \subset [0,T]$ by 
\begin{equation}
\label{jump-Delta-e}
 \Jvar {\Delta_{\gcostname}}{u}{t_0}{t_1} : = \idelta{\gcostname}{t_0}{u(t_0))}{\rli u{t_0}} +  \idelta{\gcostname}{t_1}{\lli u{t_1})}{ u(t_1)} + \sum_{t\in \jump u \cap (t_0,t_1)} 
 \aidelta{\gcostname}{t}{\lli u{t}}{u(t)}{\rli u{t}}\,.
 \end{equation}
 This induces the \emph{augmented total variation} functional
 \begin{equation}
 \label{augm-tot-var}
 \Vari {\mdn,\gcostname}u{t_0}{t_1} : = \Vari {\mdn}u{t_0}{t_1} +  \Jvar {\Delta_{\gcostname}}{u}{t_0}{t_1} \quad\text{along any sub-interval } [t_0,t_1]\subset [0,T].
\end{equation}
\end{definition}
\noindent
Since we have subtracted from the $\gcostname$-jump contribution the $\mdn$-distance of the jump end-points, cf.\ \eqref{Delta-e}, the $\mdn$-jump contribution \eqref{d-jump-contribution} to  $\Varname {\mdn}$ cancels out, and in fact only the \emph{diffuse} contribution $ \mum u{\mathrm{d}}([t_0,t_1])$ remains. In fact, one could rewrite $\Vari {\mdn,\gcostname}u{t_0}{t_1}$ as
%\footnote{\RRR Per il momento ho introdotto la misura $\mum u{}$ proprio per poter scrivere la \eqref{serve?} perch\'e a me chiariva le idee... ma forse tu volevi proprio evitare di introdurre la misura $\mu$...}
\begin{equation}
\label{serve?}
\Vari {\mdn,\gcostname}u{t_0}{t_1} =  \mum u{\mathrm{d}}([t_0,t_1]) +  \Jvar {{\gcostname}}{u}{t_0}{t_1},
\end{equation}
with $ \Jvar {{\gcostname}}{u}{t_0}{t_1}$ defined by \eqref{jump-Delta-e} with  the ``whole'' cost $\gcostname$ in place of its incremental version $\Delta_{\gcostname}$, i.e.\
\begin{equation}
\label{jump-Delta-e-simpler}
\begin{aligned}
 \Jvar {{\gcostname}}{u}{t_0}{t_1} : = \gcost{t_0}{u(t_0)}{\rli u{t_0}}  & +  \gcost{t_1}{\lli u{t_1}}{ u(t_1)} \\ & + \sum_{t\in \jump u \cap (t_0,t_1)} 
\left(  \gcost{t}{\lli u{t}}{u(t)} + \gcost{t}{u(t)} {\rli u{t}} \right) \,.
\end{aligned}
\end{equation}
\par
Clearly, 
$\Vari {\mdn,\gcostname}u{t_0}{t_1} \geq \Vari {\mdn}u{t_0}{t_1}$, and they coincide if $\gcostname = \mdn$, or when $\jump u =\emptyset$. Moreover,   as observed in 
\cite{SavMin16},  although it need not be induced by a distance on $\Xs$,
$\Varname{\mdn,\gcostname} $  still enjoys the additivity property
\[
 \Vari {\mdn,\gcostname}u{a}{c} =  \Vari {\mdn,\gcostname}u{a}{b} +  \Vari {\mdn,\gcostname}u{b}{c} \qquad \text{for all } 0\leq a \leq  b \leq c \leq T.
\]
\par
Finally,
we recall that  a 
curve $u: [0,T]\to \Xs$ is absolutely continuous 
(and write $u\in \AC([0,T];\Xs)$)
% belongs to $\AC^p([a,b];X)$,  $p\in
%[1,\infty]$,  
if there exists $m\in L^1(0,T)$ such that
\begin{equation}\label{metric_dev}
\md{u(s)}{u(t)}\leq \int_s^t m(r)\dd r \quad \text{for all $0\leq s\leq
t\leq T.$}
\end{equation}
For every  $u
\in\AC([0,T];\Xs) $, the limit
\begin{equation}
\label{m-derivative}
|u'|(t) = \lim_{s\to t}\frac{\mdn(u(s),u(t))}{|t-s|}
\quad \text{exists for a.a. } t\in (0,T),
\end{equation}
cf.\ \cite[Sec.\ 1.1]{AGS08}.
 We will refer to it as the {\it metric
derivative} of $u$ at $t$. The map
 $t \mapsto|u'|(t)$  belongs to $ L^1(0,T) $
and it is minimal within the class of functions $m\in L^1(0,T)$
fulfilling \eqref{metric_dev}. 
%%%%%%
\subsection{Energetic, Balanced Viscosity, and Visco-Energetic solutions at a glance}
\label{ss:2.2}
We now give a quick overview of the notions of rate-independent evolution this paper is concerned with.
% confining the discussion to the metric setting of  \eqref{met-sp} and to the case in which  ``viscous'' perturbation is given by $\mdn^2$. 
We aim to  somehow motivate the various solution concepts and in addition
 highlight both the common points, and the differences,  in their structure.
 \par
  Underlying the upcoming definitions, there will be the following
  basic conditions on the energy functional $\calE$. Let us mention in advance 
that we in fact allow for a possibly nonsmooth time-dependence $t\mapsto \ene tu$.
 However, in what follows for simplicity we will confine our analysis to the case in which the  domain of $\ene t{\cdot} $  in fact coincides with $\Xs$ for every $t\in [0,T]$, referring to \cite[Rmk.\ 2.7]{SavMin16} for a discussion of the more general case in which $\mathrm{dom}( \ene t{\cdot} )$ is a proper subset of $\Xs$ (still independent of  the time variable). 
%%%
 \paragraph{\bf Basic assumptions on the energy}
Throughout the paper, we will require that  $\calE$ complies with two basic properties, involving the perturbed energy functional
\begin{equation}
\pertn: [0,T]\times \Xs \to \R \quad \pert tu: = \ene tu + \md{x_o}u \quad \text{with $x_o$ a given reference point in $X$}
\end{equation}
and its sublevel sets $\subl C: = \{ (t,u)\in [0,T]\times \Xs\, : \ \pert tu \leq C\}$. Namely,
\begin{itemize}
\item[\textbf{Lower semicontinuity and compactness:}] for all $C\in \R$
\begin{equation}
\label{Ezero}
\tag{$\mathrm{E}_1$}
\text{$\calE$ is lower semicontinuous on $\subl C  $ and the sets $\subl C$ are compact in $[0,T]\times X$};
\end{equation}
\item[\textbf{Power control:}]  there exists a map $\pwn : [0,T]\times \Xs \to \R$ fufilling 
\begin{equation}
\label{Power}
\tag{$\mathrm{E}_2$}
\begin{gathered}
\liminf_{s\up t} \frac{\ene tu -\ene su}{t-s} \geq \pw tu \geq \limsup_{s\down t} \frac{\ene su -\ene tu}{s-t} \quad \text{for all } (t,u) \in [0,T]\times \Xs,
\\
\exists\, C_P>0 \ \  \forall\, (t,u) \in [0,T]\times \Xs\, : \quad
|\pw tu | \leq C_P \pert tu\,.
\end{gathered}
\end{equation}
\end{itemize}
We may understand the power functional $\pwn$ as a sort of ``time superdifferential'' of the energy functional, surrogating its partial time derivative 
in the case where the functional  $t\mapsto \ene tu $ is not differentiable  at every point of $[0,T]\times \Xs$. This for instance occurs for reduced energies having the form $\ene tu = \min_{\varphi \in \Phi} \calI(t,\varphi, u)$ and such that the set of minimizers 
does not reduce to a singleton,
as considered, e.g., in \cite{KMZ10-poly,MRS2013,MRS-MJM,SavMin16}.
 By repeating the very same arguments as in \cite{SavMin16}, we may deduce from \eqref{Ezero} \& \eqref{Power} that
\begin{equation}
\label{Lip-cont-E}
\begin{gathered}
\text{
the function $t\mapsto \ene tu $ is Lipschitz continuous  for every $u\in \Xs$, with}
\\
\pw tu = \partial_t \ene tu \qquad \text{for almost all } t \in [0,T] \text{ and for all } u \in \Xs.
\end{gathered}
\end{equation}
Therefore, 
\begin{equation}
\label{2tfc}
\ene tu = \ene su +\int_s^t \pw ru \dd r  \qquad \text{for every $[s,t]\subset[0,T]$.}
\end{equation}
 Combining this with the power control estimate in 
\eqref{Power} and exploiting the Gronwall Lemma, we conclude that 
\begin{equation}
\label{prop-pert}
\pert tu\leq \pert su \exp\left(C_P |t-s|\right) \quad \text{for all } s,\, t \in [0,T].
\end{equation}
That is why, it is significant (and notationally convenient) to work with the functional $\perto u : = \pert 0u$, which controls $\pert tu$, and thus the power functional $\pw tu$,  at all $t\in [0,T]$. 
%%%%%%%%
 \par
 We are now in a position to give  the % by now classical,
  concept of \underline{\textbf{Energetic}} solution, dating back to 
  \cite{MieThe99MMRI,MieThe04RIHM}, cf.\ also \cite{Miel05ERIS}.  
\begin{definition}[Energetic solution]
\label{def:en-sol}
A curve $u\in \BV([0,T];X)$ is an Energetic solution of the rate-independent system $\RIS$ if it satisfies for every $t\in [0,T]$
\begin{itemize}
\item[-] the global stability condition
\begin{equation}
\label{global-stab}
\tag{$\mathrm{S}_{\mdn}$}
\ene t{u(t)} \leq \ene tv + \md{u(t)}{v} \quad \text{for every } v \in \Xs,
\end{equation}
\item[-]  the energy balance
\begin{equation}
\label{global-enbal}
\tag{$\mathrm{E}_{\mdn}$}
\ene t{u(t)} + \Vari {\mdn}{u}0{t} = \ene 0{u(0)} +\int_0^t \pw s{u(s)} \dd s \,.
\end{equation}
\end{itemize}
\end{definition}
For later use, we introduce the $\mdn$-stable  set 
\[
\stab \mdn: = \{ (t,u) \in [0,T]\times \Xs \, : \ \ene tu \leq \ene tv + \md uv  \text{ for all } v \in \Xs\}, 
\]
 with its time-dependent sections $\stabi \mdn t : = \{ u \in \Xs\, : \ (t,u) \in \stab \mdn \}\,$. 
We postpone to Section \ref{ss:2.3} a discussion on the existence of Energetic solutions. 
\par
As already mentioned in the Introduction, 
\underline{\textbf{Balanced Viscosity}} solutions arise in the time-continuous limit of the time-incremental scheme \eqref{tims-bv}, when the 
 parameters $\eps$ and $\tau$ both tend to zero with  $\tfrac{\eps}\tau \up \infty$ cf.\ \eqref{eps-tau-INTRO}.
% perturbing term 
%$\delta$ is of the form 
%\begin{equation}
%\label{viscous-approx}
%\corr uv = \frac12 \mu(\tau) \mdn^2(u,v) \qquad \text{with } \lim_{\tau\to 0} \mu(|\tau|) = \infty\,.
%\end{equation}
% Typically, one has $\mu(\tau) = \tfrac{\eps(\tau)}{\tau}$ with $\eps(\tau)\to 0$ \emph{more slowly} than $\tau\down0$.\footnote{\RRR Pensare a come scrivere meglio questa cosa.. \`e un po' riduttiva, sembra che questo sia l'unico modo di generare le soluzioni BV... DIPENDE ANCHE DALL'INTRODUZIONE}
% In the case $\eps(\tau) = \eps>0$ fixed, the time-continuous limit of \eqref{tims} with $\mu(\tau) = \tfrac{\eps}{\tau}$ in fact yields the solutions to the ``viscous'' approximation of the rate-independent system $\RIS,$ which themselves converge to Balanced Viscosity solutions on the time-continuous level, \cite{...}. 
% \RRR Per motivare la definizione e la forma del costo di transizione: qui o nell'intro, scrivi la disuguaglianza dell'energia discreta, 
% \begin{equation}
 %\label{discrete-viscous-eed}
% ciao
 %\end{equation}
% con la $\psi_\eps^*$ (che nel caso delle energetic solutions e' nulla) e fai capire che dovrai passare al limite anche li'. E poi osserva che il termine che si  ottiene, cio\`e $|\dot{\teta}|(r) \left( \slope \calE t{\teta(r)} \vee 1 \right)$, e' proprio il limite della coniugata.. \EEE 
%\par
%Balanced Viscosity solutions\footnote{\RRR spiega di piu'}
They
 fulfill the \emph{local version} of the stability condition \eqref{global-stab}, involving the \emph{metric slope} of the energy functional $\calE$, cf.\  \eqref{slope-INTRO}.
% namely
%\[
%\slope \calE tu:  = \limsup_{v\to u} \frac{(\ene tu{-}\ene tv)_+}{\md uv}\,,
%\]
The ``viscous'' character  of the approximation that underlies condition
\eqref{eps-tau-INTRO},
is also reflected in the \emph{viscous jump dissipation cost}. Indeed, at fixed process time $t\in [0,T]$,  $ \bvcost t{u_-}{u_+}$  is 
obtained by minimizing the \emph{transition cost}
%\footnote{\RRR Ho introdotto un simbolo per il costo di transizione BV per evidenziare le analogie fra le definizioni BV e VE.. ma ovviamente possiamo rimuoverlo..}
\begin{equation}
\label{bv-trans-cost}
\tcost{\BV}{t}{\teta}{[r_0,r_1]}: = \int_{r_0}^{r_1}
|\teta'|(r) \left( \slope \calE t{\teta(r)} \vee 1 \right)  \dd r 
\end{equation}
 over all \emph{absolutely continuous} curves $\teta$ on an interval $[r_0,r_1]$, connecting the two points $u_-$ and $u_+$, where we recall that  $ |\teta'|$ is the (almost everywhere defined) \emph{metric derivative} of the  curve $\teta$. 
 %\RRR dovrai averla definita da qualche parte \EEE 
 Namely,
 % which comes
 \begin{equation}
 \label{vcost}
 \bvcost t{u_-}{u_+}: = \inf\left\{ \tcost{\BV}{t}{\teta}{[r_0,r_1]}
 % \int_{r_0}^{r_1} |\dot{\teta}|(r) \left( \slope \calE t{\teta(r)} \vee 1 \right) \dd r
   : \,  \teta \in \AC([r_0,r_1];\Xs),  \ \teta(r_0)=u_-, \ \teta(r_1) = u_+\right\}\,.
\end{equation}
%\RRR Commentare sulla simmetria del costo.. Qui spiegare di pi\`u come si arriva a questa forma.. \EEE
We can then introduce the incremental cost   $\Delta_{\bvcostname}$ \eqref{Delta-e} and the  jump variation $\mathrm{Jmp}_{\Delta_{\bvcostname}}$ \eqref{jump-Delta-e} associated with $\bvcostname$, and thus arrive at the induced augmented total variation $\Varname{\mdn,\bvcostname}$ \eqref{augm-tot-var}, which enters into the energy balance involved in the Balanced Viscosity concept.
\begin{definition}[Balanced Viscosity solution]
\label{def:bv-sol}
A curve $u\in \BV([0,T];X)$ is a Balanced Viscosity ($\BV$) solution of the rate-independent system $\RIS$
% with the viscous dissipation \eqref{viscous-approx}\footnote{\RRR chiaro??}, 
if it satisfies
\begin{itemize}
\item[-] the local stability condition
\begin{equation}
\label{loc-stab}
\tag{$\mathrm{S}_{\mdn,\mathrm{loc}}$}
\slope \calE t{u(t)} \leq 1 \quad \text{for every } t \in [0,T]\setminus \jump u,
\end{equation}
\item[-]  the energy balance
\begin{equation}
\label{BV-enbal}
\tag{$\mathrm{E}_{\mdn,\bvcostname}$}
\ene t{u(t)} + \Vari {\mdn,\bvcostname}{u}0{t} = \ene 0{u(0)} +\int_0^t \pw s{u(s)} \dd s \quad \text{for all } t \in [0,T]\,.
\end{equation}
\end{itemize}
\end{definition} 
\par
The notion of \underline{\textbf{Visco-Energetic}} solution features a modified concept of stability which also involves the viscous correction $\corr uv = \tfrac \mu2 \mdn^2(u,v)$.
We then define the functional
\begin{equation}
\label{cmdn}
\cmd uv: = \md u v + \corr uv = \md u v +\frac \mu2 \mdn^2(u,v)
\end{equation}
and 
we say that a point $(t,x)\in [0,T]\times \Xs$ is $\cmdn$-stable if \begin{equation}
\label{cmdn-stab}
\ene tx \leq \ene ty + \cmd xy = \ene ty +\md xy +\frac\mu2 \mdn^2(x,y)  \qquad \text{for all } y \in \Xs\,.
\end{equation}
We denote by $\stab \cmdn$ the collection of all $\cmdn$-stable points, and by $\stabi \cmdn t$ its section at time $t\in [0,T]$. We also introduce the \emph{residual stability function} $\calR:[0,T]\times \Xs \to \R$ given by 
\begin{equation}
\label{residual-stability-function}
\rstab tx: = \sup_{y\in \Xs} \left\{ \ene tx - \ene ty -\cmd xy \right\} = \ene tx - \inf_{y\in \Xs} \left\{ \ene ty+\cmd xy\right\}
\end{equation}
(for simplicity, we choose to neglect the $\mu$-dependence of the functionals $\cmdn$ and $\calR$ in their notation). 
Observe that
\begin{equation}
\label{propR}
\rstab tx \geq 0 \quad \text{for all } (t,x) \in [0,T]\times \Xs \quad \text{with} \quad \rstab tx=0 \text{ if and only if } (t,x) \in \stab \cmdn\,,
\end{equation}
so that $\calR$ may be interpreted as 
``measuring the failure'' of the stability condition at a given point  $(t,x) \in [0,T]\times \Xs$.
It can be straightforwardly checked that,  under the basic lower semicontinuity assumption \eqref{Ezero} on $\calE$, the functional $\calR$ is lower semicontinuous on $[0,T]\times \Xs$. 
\par
We now have all the ingredients to define the jump-dissipation cost for Visco-Energetic solutions. In the same way as for Balanced Viscosity solutions, such a  cost is obtained by minimizing a suitable transition cost over a class of curves connecting the two end-points of the jump. 
However, such curves, while still continuous, need not be absolutely continuous. Further, they are in general defined on a \emph{compact} subset $E\subset\R$ that may have  a  more complicated structure than that of an interval. To describe it, we 
introduce
\begin{equation}
\label{holes}
\text{the collection $\hole E$ of the connected components of the  set $[E^-,E^+]\setminus E$,}
\end{equation}
where we recall that $E^-=\inf E$ and $E^+=\sup E$. Since $[E^-,E^+]\setminus E$ is an open set, $\hole E$ consists of at most countably many open intervals, which we will often refer to as the ``holes'' of $E$. 
Hence, the transition cost at the basis of the concept of Visco-Energetic solution evaluates 
(1)  the  $\mdn$-total variation of a continuous curve defined on a set $E$, (2) the sum, over all the holes of $E$, of a quantity  related to the gaps 
% of the gaps
%%%%
%the square distance of  the images through $\teta$ 
%of the extreme points of the holes,
(3)  the measure of ``how much''  the curve $\teta$ fails to comply with the $\cmdn$-stability condition \eqref{cmdn-stab} at the points in $E \setminus \{E^+\}$.
\begin{definition}
\label{def-VE-trans-cost}
Let $E$ be a compact subset of $\R$ and $\teta \in \mathrm{C}(E;\Xs)$. For every $t\in [0,T]$ we define the \emph{transition cost function} 
\begin{equation}
\label{ve-tcost}
\tcost{\VE}{t}{\teta}{E} : = \Vars {\mdn}\teta E + \Gap{\mdn}\teta E  + \sum_{s\in E{\setminus}E^+} \rstab t{\teta(s)}\,,
\end{equation}
with
\begin{enumerate}
\item $ \Vars {\mdn}\teta E $ from \eqref{def-tot-var};
\item $\Gap{\mdn}\teta E: = \sum_{I\in \hole E} \frac\mu2 \mdn^2(\teta(I^-),\teta(I^+))$;
\item the (possibly infinite) sum
\[
 \sum_{s\in E{\setminus}E^+} \rstab t{\teta(s)}: =  \begin{cases}
 \sup \{  \sum_{s\in P} \rstab t{\teta(s)}\, :  \ P \in \mathfrak{P}_f(E) \} &\text{ if }  E{\setminus}E^+ \neq \emptyset,
 \\
 0 &\text{ otherwise}
 \end{cases}
\]
(recall that $ \mathfrak{P}_f(E)$ denotes the collection of all finite subsets of $E$).
\end{enumerate}
\end{definition}
Along with \cite{SavMin16}, we observe that, for every fixed $t\in [0,T]$ and $\teta \in \mathrm{C}(E;\Xs)$, the transition cost fulfills the additivity property
\[
\tcost{\VE}{t}{\teta}{E \cap [a,c]} = \tcost{\VE}{t}{\teta}{E \cap [a,b]} + \tcost{\VE}{t}{\teta}{E \cap [b,c]}  \quad \text{for all } a<b<c\,.
\]
%\RRR altre propriet\`a?? \EEE 
We are now in a position to define the associated \emph{visco-energetic jump dissipation cost} $\vecostname: [0,T]\times X \times X \to [0,\infty]$ via
 \begin{equation}
 \label{vecost}
  \vecost t{u_-}{u_+}: = \inf\{ \tcost{\VE}{t}{\teta}{E}\, : \ E \Subset \R, \ \teta \in \mathrm{C}(E;\Xs), \ \teta(E^-) =u_-, \  \teta(E^+) =u_+ \},
\end{equation}
whence the incremental dissipation cost $\Delta_{\vecostname}$ according to   \eqref{Delta-e}, the jump variation $\mathrm{Jmp}_{\Delta_{\vecostname}}$ as in  \eqref{jump-Delta-e}, and the  augmented total variation $\Varname{\mdn,\vecostname}$ as in  \eqref{augm-tot-var}. 
\par
We can now give the following
\begin{definition}[Visco-Energetic solution]
A curve $u\in \BV([0,T];X)$ is  a Visco-Energetic ($\VE$) solution of the rate-independent system $\RIS$
%\eqref{viscous-approx}\footnote{\RRR chiaro??}, 
if it satisfies
\begin{itemize}
\item[-] the $\cmdn$-stability condition
\begin{equation}
\label{stab-VE}
\tag{$\mathrm{S}_{\cmdn}$}
\ene t{u(t)} \leq \ene tv + \md{u(t)}v + \frac\mu2 \mdn^2 (u(t),v)   \quad \text{for every } v \in \Xs \text{ and for every } t \in [0,T]\setminus \jump u,
\end{equation}
\item[-]  the energy balance
\begin{equation}
\label{enbal-VE}
\tag{$\mathrm{E}_{\mdn,\vecostname}$}
\ene t{u(t)} + \Vari {\mdn,\vecostname}{u}0{t} = \ene 0{u(0)} +\int_0^t \pw s{u(s)} \dd s \quad \text{for all } t \in [0,T]\,.
\end{equation}
\end{itemize}
\end{definition}
%%%%%%
\section{Main results}
\label{s:main-results}
%\RRR ADD SOME TEXT \EEE
Prior to stating our own results on the singular limits of $\VE$ solutions in Section \ref{ss:2.4}, 
in Sec.\ \ref{ss:2.3} below 
we recall the known existence results for Energetic,  $\BV$,   and $\VE$ solutions. Under the same conditions ensuring the existence for the two former solution concepts, we will prove our convergence statements 
for $\VEa{\mu}$ solutions 
 in the limits $\mu\down0$ and $\mu\up\infty$, respectively. 
\subsection{A survey on existence  results}
\label{ss:2.3}
In what follows, in addition to the basic conditions   \eqref{Ezero}  and  \eqref{Power},  we will introduce further assumptions 
%we    recall the main 
on the energy functional $\calE$
that will be at the core of the upcoming existence results for  Energetic (Thm.\ \ref{th:ex-en}), $\BV$ (Thm.\ \ref{th:ex-bv}), and $\VE$ (Thm.\ \ref{th:ex-ve}) solutions. We will also 
illustrate the main ideas underlying their proofs.
 %We will then state the main results of this note, concerning the singular limits of $\VE$ solutions. \RRR dire qualcosa di piu' sulle ipotesi... \EEE
%%%
\paragraph{\bf Energetic solutions.} For the existence of Energetic solutions in the metric setting of \eqref{met-sp} we refer to \cite[Thm.\ 4.5]{MaiMie05EREM}, cf.\ also \cite{Miel05ERIS} and \cite[Sec.\ 2.1]{MieRouBOOK}.  In accordance with these results,
in addition  to the coercivity \eqref{Ezero} and the   power control \eqref{Power},   we require that
\begin{itemize}
\item[\textbf{Upper semicontinuity of the power:}]
$\pwn :[0,T]\times \Xs \to \R$ satisfies the \emph{conditional upper semicontinuity} condition
\begin{equation}
\label{uscPower}
\tag{$\mathrm{E}_3$}
\left( (t_n,u_n)\to(t,u) \text{ in } [0,T]\times \Xs, \ \ene {t_n}{u_n}\to \ene tu \right) \ \Longrightarrow \ \limsup_{n\to\infty} \pw {t_n}{u_n}\leq \pw tu\,.
\end{equation}
\end{itemize}
%In the present metric setting, conditions , \eqref{Power} and \eqref{uscPower} are sufficient to ensure the existence of 
%energetic solutions, as shown in
  %in \cite{...} (see also...): in particular, let us point out that the 
  We thus have 
  \begin{theorem}
  \label{th:ex-en}
  Let $\calE: [0,T]\times \Xs \to \R$ comply with  \eqref{Ezero}, \eqref{Power} and \eqref{uscPower}. Then, for every  initial datum $u_0$ stable at $t=0$, i.e.\ $u_0 \in \stabi \mdn 0$, there exists at least one Energetic solution to the rate-independent system $\RIS$ with $u(0)=u_0$.
  \end{theorem}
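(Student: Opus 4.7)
The plan is to follow the well-established variational time-discretization strategy of Mielke-Theil-Mainik. For every partition $\calT_\tau=\{0=t_0^\tau<t_1^\tau<\dots<t_N^\tau=T\}$ with fineness $\tau$, I would construct discrete solutions $(U_n^\tau)_{n=0}^N$ by setting $U_0^\tau:=u_0$ and solving the incremental minimization scheme $(\mathrm{IM}_\tau)$, i.e.\ $U_n^\tau\in\mathrm{Argmin}_{U\in\Xs}\{\ene{t_n^\tau}{U}+\md{U_{n-1}^\tau}{U}\}$. Existence of these minimizers is immediate from the direct method, using the lower semicontinuity and sublevel-compactness \eqref{Ezero} of $\pertn$. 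A direct comparison argument, coupled with the triangle inequality for $\mdn$, then shows that each $U_n^\tau$ is $\mdn$-stable at time $t_n^\tau$, i.e.\ $(t_n^\tau,U_n^\tau)\in\stab\mdn$.

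The second step is the derivation of the discrete energy inequality. Testing minimality of $U_n^\tau$ against $U_{n-1}^\tau$ yields $\ene{t_n^\tau}{U_n^\tau}+\md{U_{n-1}^\tau}{U_n^\tau}\leq \ene{t_n^\tau}{U_{n-1}^\tau}$, and adding $\ene{t_{n-1}^\tau}{U_{n-1}^\tau}$ to both sides, rewritten through the absolute continuity \eqref{2tfc}, gives after telescoping
\begin{equation*}
\ene{t_n^\tau}{U_n^\tau}+\sum_{k=1}^n \md{U_{k-1}^\tau}{U_k^\tau}\leq \ene{0}{u_0}+\sum_{k=1}^n\int_{t_{k-1}^\tau}^{t_k^\tau}\pw r{U_{k-1}^\tau}\dd r.
\end{equation*}
Combining the power-control estimate in \eqref{Power} with a discrete Gronwall argument as in \eqref{prop-pert} produces a uniform a priori bound on $\perto{U_n^\tau}$ and thus on the total variation $\sum_{k=1}^n\md{U_{k-1}^\tau}{U_k^\tau}$, independent of $\tau$ and $n$.

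Third, I would introduce the piecewise constant interpolants $\pwc U\tau$ and apply a Helly-type selection theorem in metric spaces: the $\tau$-uniform variation bound combined with the compactness of sublevels from \eqref{Ezero} yields a subsequence $(\pwc U{\tau_k})_k$ converging pointwise on $[0,T]$ to some $u\in\BV([0,T];\Xs)$, with $\Vari\mdn u 0 t\leq \liminf_k \Vars\mdn {\pwc U{\tau_k}}{[0,t]}$. Passing to the limit in the discrete stability, using the lower semicontinuity of $\calE$ provided by \eqref{Ezero} and the triangle inequality for $\mdn$, one obtains the global stability \eqref{global-stab} at every $t\in[0,T]$; in particular $u(0)=u_0$. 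Passing to the limit in the discrete energy inequality, with lower semicontinuity for the left-hand side and the conditional upper semicontinuity \eqref{uscPower} of the power $\pwn$ combined with dominated convergence for the right-hand side, produces the $\leq$ inequality in \eqref{global-enbal}.

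The key and most delicate step is the opposite $\geq$ inequality, which is obtained from the global stability itself rather than from the discrete scheme. Along any partition $0=s_0<s_1<\dots<s_M=t$, one tests \eqref{global-stab} at $u(s_{j-1})$ against the competitor $u(s_j)$, getting $\ene{s_{j-1}}{u(s_{j-1})}-\ene{s_{j-1}}{u(s_j)}\leq \md{u(s_{j-1})}{u(s_j)}$; rewriting $\ene{s_{j-1}}{u(s_j)}-\ene{s_j}{u(s_j)}=-\int_{s_{j-1}}^{s_j}\pw r{u(s_j)}\dd r$ via \eqref{2tfc}, summing and passing to the refinement limit of the partition (using \eqref{Lip-cont-E} and a standard Riemann-sum argument based on $\BV$-regularity of $u$ and the bound in \eqref{Power}), one controls $\ene 0{u(0)}-\ene t{u(t)}+\int_0^t\pw s{u(s)}\dd s$ from above by $\Vari \mdn u 0 t$, which is exactly the missing inequality. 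The main obstacle is ensuring that the Riemann sum $\sum_j \int_{s_{j-1}}^{s_j}\pw r{u(s_j)}\dd r$ converges to $\int_0^t\pw s{u(s)}\dd s$ despite the possible jumps of $u$: this is where the pointwise a.e.\ behavior of $u$, the power bound in \eqref{Power}, and \eqref{uscPower} cooperate to yield the claim.
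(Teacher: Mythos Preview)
Your proposal is correct and follows essentially the same approach as the paper's (sketched) proof: the paper likewise constructs discrete solutions via the scheme $(\mathrm{IM}_\tau)$, derives discrete stability and the discrete upper energy estimate, obtains a priori bounds from the power control, applies a Helly-type compactness argument based on \eqref{Ezero}, passes to the limit in stability via closedness of $\stab\mdn$, obtains the upper energy estimate via lower semicontinuity together with \eqref{uscPower}, and finally deduces the lower energy estimate from global stability by the Riemann-sum argument (citing \cite[Prop.\ 2.1.23]{MieRouBOOK}) or alternatively \cite[Lemma 6.2]{SavMin16}. The only point worth flagging is that invoking \eqref{uscPower} in the limit passage for the power term is \emph{conditional} on energy convergence $\ene{t}{\pwc U{\tau_k}(t)}\to\ene t{u(t)}$, which you do not explicitly verify; this follows by testing the discrete stability against the limit $u(t)$ (yielding the $\limsup$ inequality) and combining with the lower semicontinuity from \eqref{Ezero}.
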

  The \emph{proof} is based on a (by now standard in the frame of rate-independent systems)
  time-discretization procedure, with the discrete solutions 
  constructed by recursively solving the time-incremental minimization scheme \eqref{tims-ris}. 
  Their (piecewise constant) interpolants are shown to comply with the discrete versions of the stability condition \eqref{global-stab} and of the upper energy estimate in \eqref{global-enbal}, whence all a priori estimates  stem, also based on the power control \eqref{Power}. With a Helly-type compactness result, crucially relying on   \eqref{Ezero}, we  thus infer that the approximate solutions pointwise converge to a curve $u\in \BV([0,T];X)$. 
  The continuity
  (cf.\ \eqref{Lip-cont-E}) and 
    lower semicontinuity   
  properties   \begin{equation}
\label{4closure-stable-set}
t_n \to t \ \Rightarrow \   \ene {t_n}y \to \ene ty \text{ for all } y\in \Xs, \qquad \left( t_n  \to t, \ u_n \to u \right) \ \Rightarrow \ \liminf_{n\to\infty} 
 \ene {t_n}{u_n} \geq \ene tu 
  \end{equation}
  ensure the closedness of the stable set $\stab \mdn$, which
  allows us to pass to the limit in the discrete stability condition and conclude that $u$ complies with  \eqref{global-stab}. Lower semicontinuity arguments, joint with \eqref{uscPower}, lead to the limit passage in the discrete upper energy estimate, so that $u$ complies with the  upper energy estimate  $\leq$  of  \eqref{global-enbal}. The lower energy estimate $\geq$ can be then deduced from the stability condition either via a Riemann-sum argument, formalized in, e.g.,    \cite[Prop,\ 2.1.23]{MieRouBOOK}, or by applying \cite[Lemma 6.2]{SavMin16}. 
\paragraph{\bf Balanced Viscosity solutions.} 
Along the footsteps of 
\cite[Thm.\ 4.2]{MRS-MJM}, 
%where  a fairly general existence result for Balanced Viscosity solutions
%in the metric setting of \eqref{met-sp}
 %arising from the limit passage 
 for the existence of Balanced Viscosity solutions, in addition to  \eqref{Ezero} and \eqref{Power},  we again need to impose the (conditional) upper semicontinuity of the power functional and, \emph{in addition}, the 
 lower semicontinuity of the slope
 %\footnote{\RRR Ma non serve imporre anche la continuit\`a dell'energia, mi sembra...}
  %\footnote{\RRR per $t$ che varia?? CHECK},
 along sequences \emph{with bounded energy and  slope}.
  These requirements are subsumed by the following condition:
\begin{itemize}
\item[\textbf{Upper semicontinuity of the power, lower semicontinuity of the slope:}] % continuity of the energy:}]
$\calE :[0,T]\times \Xs \to \R$ and
$\pwn :[0,T]\times \Xs \to \R$ satisfy
\begin{equation}
\label{uscPower-bis}
\tag{$\mathrm{E}_3'$}
\begin{aligned}
&
\left( (t_n,u_n)\to(t,u) \text{ in } [0,T]\times \Xs, \ 
\sup_{n\in\N} \perto{u_n}<\infty, \ \sup_{n\in \N} \slope \calE{t_n}{u_n}<\infty
 \right) 
\\
&  \Longrightarrow \ \begin{cases}
%\ene {t_n}{u_n}\to \ene tu,
%\\
\liminf_{n\to\infty} \slope \calE{t_n}{u_n} \geq \slope \calE{t}{u},
\\
 \limsup_{n\to\infty} \pw {t_n}{u_n}\leq \pw tu\,.
 \end{cases}
 \end{aligned}
 \end{equation}
\end{itemize}
The last, key condition  underlying the existence of Balanced Viscosity solutions is that $\calE$ complies with the
\begin{itemize}
\item[\textbf{Chain-rule inequality:}] for every curve $u\in \AC([0,T];\Xs)$   the function $t\mapsto \ene t{u(t)}$ is absolutely continuous on $[0,T]$, and there holds
%\footnote{\RRR dare solo la disuguaglianza di chain rule, oppure l'identit\`a??..}
\begin{equation}
\label{ch-rule-ineq}
\tag{$\mathrm{E}_4$}
-\frac{\dd}{\dd t} \ene t{u(t)} +\pw t{u(t)} \leq |u'|(t) \slope \calE{t}{u(t)} \qquad \foraa t \in (0,T)\,.
\end{equation}
\end{itemize}
Under these conditions, the following existence result  was proved in \cite{MRS-MJM}.
% it is possible to prove the existence of Balanced Viscosity solutions 
% by passing to the limit in the viscous regularization ... of the rate-independent system $\RIS$, either on the time-continuous, or directly on the time-discrete, level.\footnote{\RRR spiegare meglio..}  \RRR add ref.\ \EEE
\begin{theorem}
\label{th:ex-bv}
  Let $\calE: [0,T]\times \Xs \to \R$ comply with  \eqref{Ezero}, \eqref{Power}, \eqref{uscPower-bis}, and  \eqref{ch-rule-ineq}.
  Then, for every $u_0\in \Xs$ there exists at least one Balanced Viscosity solution to the rate-independent system $\RIS$ with $u(0)=u_0$.
\end{theorem}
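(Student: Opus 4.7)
The plan is to produce $\BV$ solutions by vanishing viscosity, following \cite{MRS12,MRS13,MRS-MJM}. First I would solve the regularised generalised gradient system $(\Xs,\calE,\mdn,\psi_\eps)$ at fixed viscosity $\eps>0$, with $\psi_\eps(r)=r+\tfrac{\eps}{2}r^2$. Letting $\tau\downarrow 0$ in the perturbed incremental scheme $(\mathrm{IM}_{\eps,\tau})$ and invoking the general existence theory for metric gradient flows driven by superlinear dissipations (as in \cite{RMS08}), assumptions \eqref{Ezero}--\eqref{Power} produce an absolutely continuous curve $u_\eps\in \AC([0,T];\Xs)$ with $u_\eps(0)=u_0$ satisfying the energy--dissipation identity
\begin{equation*}
\ene{t}{u_\eps(t)}+\int_0^t \bigl[\psi_\eps(|u_\eps'|(r))+\psi_\eps^*(\slope{\calE}{r}{u_\eps(r)})\bigr]\dd r = \ene{0}{u_0}+\int_0^t \pw{r}{u_\eps(r)}\dd r,
\end{equation*}
with $\psi_\eps^*$ the convex conjugate of $\psi_\eps$. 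A Gr\"onwall argument based on the power control in \eqref{Power} then yields uniform bounds on $\perto{u_\eps(t)}$ and on the viscous dissipation $\int_0^T |u_\eps'|(r)\bigl(1\vee \slope{\calE}{r}{u_\eps(r)}\bigr)\dd r$.

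The limit $\eps\downarrow 0$ would then be taken via a reparametrisation. Following \cite{MRS-MJM}, I would define a viscous arc-length $s_\eps(t):=t+\int_0^t m_\eps(r)\dd r$ for a suitable integrand $m_\eps\ge |u_\eps'|(r)(1\vee \slope{\calE}{r}{u_\eps(r)})$, invert it to get $(\hat t_\eps,\hat u_\eps):[0,S_\eps]\to [0,T]\times \Xs$, and observe the normalisation $\hat t_\eps'(s)+|\hat u_\eps'|(s)\bigl(1\vee \slope{\calE}{\hat t_\eps(s)}{\hat u_\eps(s)}\bigr)\le 1$ almost everywhere. The uniform energy bound, together with the compactness in \eqref{Ezero}, confines the trajectories into a common compact subset of $[0,T]\times \Xs$, so Ascoli--Arzel\`a yields a subsequence with $(\hat t_{\eps_n},\hat u_{\eps_n})\to (\hat t,\hat u)$ uniformly on a common $[0,S]$. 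The candidate $\BV$ solution is then $u(t):=\hat u(s)$ for any $s$ with $\hat t(s)=t$, with conventional choices at the plateaux on which $\hat t$ is constant, which correspond precisely to the jump times of $u$.

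Local stability \eqref{loc-stab} at continuity points of $u$ follows by transferring the viscous stationarity $\slope{\calE}{r}{u_{\eps_n}(r)}\le 1+o(1)$ to the limit via the lower semicontinuity of the slope in \eqref{uscPower-bis}. The upper estimate $\le$ in the energy balance \eqref{BV-enbal} comes from passing to the liminf in the identity above, using \eqref{uscPower-bis} for the power and slope terms. The lower estimate $\ge$ is provided by the chain rule \eqref{ch-rule-ineq}, applied on continuity intervals of $u$ directly and, along each plateau $J$ of $\hat t$ over a jump time $t_*$, to the absolutely continuous transition $\hat u|_J$, yielding $\ene{t_*}{\lli{u}{t_*}}-\ene{t_*}{\rli{u}{t_*}}\le \int_J |\hat u'|(s)\slope{\calE}{t_*}{\hat u(s)}\dd s$.

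The main obstacle is matching the jump contribution with the viscous cost $\bvcost{t_*}{\lli{u}{t_*}}{\rli{u}{t_*}}$ from \eqref{vcost}. The $1$-Lipschitz normalisation gives $\int_J |\hat u'|(s)\bigl(1\vee \slope{\calE}{t_*}{\hat u(s)}\bigr)\dd s\le |J|$, which controls the jump dissipation from above; to close the balance one must also show saturation, i.e.\ $\slope{\calE}{t_*}{\hat u(s)}\ge 1$ a.e.\ on $J$, so that the integral equals the correct dissipated energy and $\hat u|_J$ is an admissible, in fact optimal, competitor in the infimum defining $\bvcostname$. This saturation, together with the lower semicontinuity of the augmented variation $\Vari{\mdn,\bvcostname}{\cdot}{0}{T}$ along the family $(u_{\eps_n})$, is the delicate analytic point; it is secured in \cite{MRS-MJM} by combining \eqref{ch-rule-ineq} on plateaux with a sharp use of the viscous energy balance, and it is precisely the step where hypothesis \eqref{uscPower-bis} is critical for the slope.
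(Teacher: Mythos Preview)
Your proposal is essentially correct and follows the same vanishing-viscosity route that the paper sketches and attributes to \cite{MRS-MJM}: solve the viscous gradient system $(\Xs,\calE,\mdn,\psi_\eps)$, reparametrise by a viscous arc-length, pass to the limit via compactness, obtain local stability and the upper energy estimate by the lower semicontinuity in \eqref{uscPower-bis}, and close the energy balance with the chain rule \eqref{ch-rule-ineq}. The paper itself does not prove this theorem in detail but points to \cite[Thm.\ 4.2]{MRS-MJM} and to \cite[Prop.\ 4.2, Thm.\ 4.3]{MRS12} for the chain-rule step yielding the lower energy estimate; your outline reproduces that strategy. One small imprecision: the ``saturation'' you describe is not that $\slope{\calE}{t_*}{\hat u(s)}\geq 1$ for a.e.\ $s\in J$, but only for a.e.\ $s$ with $|\hat u'|(s)>0$; this is what actually follows from combining the chain rule with the energy identity on the plateau, and it suffices since the integrand vanishes elsewhere.
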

As  mentioned in the Introduction, in the \emph{proof} of 
 \cite[Thm.\ 4.2]{MRS-MJM} (cf.\ also \cite{MRS09}), $\BV$ solutions arise 
 by taking  the vanishing-viscosity limit, as $\eps \down 0$,  of  the time-continuous solutions of the 
 Gradient Systems $(X,\calE,\mdn,\psi_\eps)$ with $\psi_\eps$ from \eqref{psi-eps-intro}.  %\RRR avrai spiegato questa notazione nell'intro.. \EEE 
 Nonetheless, exploiting the arguments from \cite{MRS12, MRS13} in the Banach setting,
  the  vanishing-viscosity analysis  developed in \cite{MRS-MJM}  could be easily adapted to the direct limit passage in the time-discretization scheme \eqref{tims-bv}.
  % and more precisely in the discrete viscous
 %``viscous'' energy dissipation estimate \eqref{discrete-viscous-eed} (written for suitable interpolants),   
  In fact, the lower semicontinuity of the slope from \eqref{uscPower-bis} serves
   to the purpose of passing to the limit in the dissipation term  %the left-hand side of the
   in the discrete energy-dissipation inequality arising from the scheme \eqref{tims-bv}.
  This leads to the total variation term $\Vari{\mdn,\bvcostname}{u}0t$ in the energy balance \eqref{BV-enbal}. 
   Instead, the upper semicontinuity of the power allows us to take the limit in the power term of the 
   discrete energy inequality.  In this way, it is possible to conclude that 
 any limit curve $u\in \BV([0,T];\Xs)$ of the discrete solutions complies with the local stability condition 
 \eqref{loc-stab} % \RRR cf.\ quanto avrai scritto nella Sez.\ \ref{ss:2.1}  \EEE
   and with   the  upper energy estimate 
 \begin{equation}
 \label{en-uee-BV}
 \tag{$\mathrm{E}_{\mdn,\bvcostname}^{\mathrm{ineq}}$}
 \ene t{u(t)} + \Vari {\mdn,\bvcostname}{u}0{t} \leq \ene 0{u(0)} +\int_0^t \pw s{u(s)} \dd s\,.% \quad \text{for all } t \in [0,T]\,.
 \end{equation} 
Unlike the case of Energetic solutions, where the validity of  global stability condition \eqref{global-stab}  was sufficient to conclude  the  lower energy estimate for \eqref{global-enbal}, \eqref{loc-stab} is  not strong enough to lead to the converse inequality of \eqref{en-uee-BV}. This is instead ensured by a chain-rule argument based on \eqref{ch-rule-ineq}, cf.\  \cite[Prop.\ 4.2, Thm.\ 4.3]{MRS12}. 
\par
Finally, let us mention that,
under the very  assumptions for the existence Thm.\  \ref{th:ex-bv},
 trivially adapting the argument for \cite[Thm.\ 3.15]{MRS13} it can be shown that 
a curve $u\in \BV([0,T];\Xs)$ is a $\BV$ solution to the rate-independent system $\RIS $ if and only if it satisfies  \eqref{loc-stab}, the localized energy inequality 
\begin{equation}
\label{localized-enineq}
\ene t{u(t)} + \Vari {\mdn}{u}s{t} \leq \ene s{u(s)} +\int_s^t \pw r{u(r)} \dd r \quad \text{for all } 0 \leq s \leq t \leq T,
\end{equation}
and the jump conditions 
\begin{equation}
\begin{aligned}
\label{bv-jump-cond}
&
\ene t{\lli u t } - \ene t{u(t)} = \bvcost t{\lli ut}{u(t)},
\\
&
\ene t{ u (t) } - \ene t{\rli u t } = \bvcost t{u(t)}{\rli u t},
\\
&
\ene t{\lli u t } - \ene t{\rli ut}  = \bvcost t{\lli ut}{\rli u t}\,.
\end{aligned}
\end{equation}

%\RRR Aggiungere qui commenti sulle propriet\`a delle soluzioni $\BV$...dare la caratterizzazione in termini di jump conditions??  \EEE
%%%%
\paragraph{\bf Visco-Energetic solutions.} 
As already hinted, Visco-Energetic solutions were introduced
in \cite{SavMin16} within a more complex topological setting, featuring an \emph{asymmetric} distance and a topology $\sigma$, involved in the coercivity condition on the energy functional.
 It turns out that, in the present metric setting where $\sigma$ is the topology induced by $\mdn$,
\eqref{Ezero}, \eqref{Power} and  \eqref{uscPower-bis} coincide with the conditions required on the energy functional $\calE$ within  \cite[Assumption \textbf{$<A>$}, Sec.\ 2.2]{SavMin16}.  Furthermore, the particular choice $\corr uv = \tfrac \mu2 \mdn^2(u,v)$ for the viscous correction ensures the validity of \cite[Assumption \textbf{$<B>$}, Sec.\ 3.1]{SavMin16}.
In particular, condition 
\cite[\textbf{$<B.3>$}, Sec.\ 3.1]{SavMin16}
is fulfilled, namely $\cmdn$-stability implies local $\mdn$-stability, as it can be straightforwardly checked. 
Finally, thanks to the lower semicontinuity of the residual functional $\calR$ from \eqref{residual-stability-function}, also  \cite[Assumption \textbf{$<C>$}, Sec.\ 3.3]{SavMin16} is fulfilled. Therefore, \cite[Thm.\ 3.9]{SavMin16} applies, ensuring the convergence of the 
time-incremental scheme \eqref{tims}, with $\mu>0$ fixed,  to a Visco-Energetic solution. In particular, we have the following existence result, under the \emph{same} conditions on the energy functional as in  the existence Thm.\ \ref{th:ex-en} for Energetic solutions. 
%\RRR dovrai dire qualcosa sul fatto che $\mu$ \`e fisso.. \EEE 
\begin{theorem}
\label{th:ex-ve}
  Let $\calE: [0,T]\times \Xs \to \R$ comply with  \eqref{Ezero}, \eqref{Power} and \eqref{uscPower}. Then, for every
   $\mu>0$ and every 
    initial datum $u_0 \in \Xs$ there exists at least one   $\VEa{\mu}$ solution  to the rate-independent system $\RIS$ with $u(0)=u_0$.
\end{theorem}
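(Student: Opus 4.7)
The plan is to apply \cite[Thm.\ 3.9]{SavMin16} directly, since the hypotheses of our statement are tailored to fit its abstract scheme. Thus the proof reduces to verifying that the structural Assumptions $(A)$, $(B)$, $(C)$ of that paper hold in the present purely metric setting $\RIS$ with the specific viscous correction $\corn(u,v) = \tfrac{\mu}{2}\mdn^2(u,v)$, and then invoking it.

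First I would verify Assumption $(A)$ on the energy: with $\sigma$ being the topology induced by $\mdn$, this is exactly the conjunction of \eqref{Ezero}, \eqref{Power}, and the upper semicontinuity \eqref{uscPower} of the power. Note that the lower semicontinuity of the metric slope (the first clause of \eqref{uscPower-bis}) is \emph{not} needed here: the Visco-Energetic framework does not invoke $|\rmD \calE|$, so only \eqref{uscPower} is required in order to pass to the limit in the discrete power term.

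Next I would check Assumption $(B)$ on the augmented dissipation $\cmd{u}{v} = \md{u}{v} + \tfrac{\mu}{2}\mdn^2(u,v)$. Nonnegativity, vanishing on the diagonal, and lower semicontinuity of $\corn = \tfrac{\mu}{2}\mdn^2$ are immediate. The key compatibility condition $(B.3)$, namely that $\cmdn$-stability implies local $\mdn$-stability, follows by dividing the $\cmdn$-stability inequality $\ene{t}{u} \leq \ene{t}{v} + \md{u}{v} + \tfrac{\mu}{2}\mdn^2(u,v)$ by $\md{u}{v}$ and letting $v \to u$: the quadratic term disappears and one recovers $\slope{\calE}{t}{u} \leq 1$. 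The remaining continuity and coupling conditions of $(B)$ hold since $\corn$ is a continuous function of $\mdn$ on bounded sets.

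Assumption $(C)$ requires the residual stability function $\rstab{t}{x} = \ene{t}{x} - \inf_{y \in \Xs}\{\ene{t}{y} + \cmd{x}{y}\}$ to be lower semicontinuous on sublevels of $\perto{\cdot}$. The first term is l.s.c.\ by \eqref{Ezero}; for the upper semicontinuity of the infimum along a sequence $(t_n,x_n)\to(t,x)$ one picks an almost-minimizer $y^\ast$ at $(t,x)$ and uses the joint continuity of $(x,y)\mapsto \cmd{x}{y}$ combined with the Lipschitz estimate \eqref{Lip-cont-E} on $t\mapsto \ene{t}{y^\ast}$. Together with the existence of discrete minimizers for \eqref{tims}, which is ensured by the direct method via \eqref{Ezero} since the quadratic penalization $\tfrac{\mu}{2}\mdn^2$ forces bounded $\mdn$-distance from the previous iterate, these verifications complete the hypotheses, and \cite[Thm.\ 3.9]{SavMin16} produces the sought $\VEa{\mu}$ solution. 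The main obstacle, already addressed in the abstract theorem, is the limit passage in the discrete energy inequality and the identification of the jump contributions with the visco-energetic cost $\vecostname$: clusters of discrete iterates that concentrate near a jump time of the limit curve must be reparameterized to yield an admissible transition $\teta:E\to \Xs$ on a compact set $E\subset \R$ whose gap structure $\hole{E}$ encodes precisely the $\tfrac{\mu}{2}\mdn^2$-penalization accumulated along the burst.
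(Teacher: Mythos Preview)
Your proposal is correct and follows essentially the same route as the paper: verify that \eqref{Ezero}, \eqref{Power}, \eqref{uscPower} together with the choice $\corn(u,v)=\tfrac\mu2\mdn^2(u,v)$ imply Assumptions $\langle A\rangle$, $\langle B\rangle$, $\langle C\rangle$ of \cite{SavMin16} (in particular checking $\langle B.3\rangle$ and the lower semicontinuity of $\calR$), and then invoke \cite[Thm.\ 3.9]{SavMin16}. Your additional remarks on the verification of $\langle C\rangle$ and on the reparameterization of discrete bursts are consistent with, though slightly more detailed than, the paper's discussion.
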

The  outline of the existence argument is the same as for   Energetic solutions, though the technical difficulties attached to the single steps are peculiar of the Visco-Energetic case. The $\cmdn$-stability condition \eqref{stab-VE} and the upper energy estimate in \eqref{enbal-VE} are derived by passing to the limit in their discrete versions, valid for the discrete solutions to the time-incremental scheme \eqref{tims}. As shown in \cite[Thm.\ 6.5]{SavMin16}, the lower energy estimate  can then be derived from   \eqref{stab-VE} 
by applying \cite[Lemma 6.2]{SavMin16}.
\par
 Under the same conditions as for the existence Thm.\ \ref{th:ex-ve}, we have the following `stability' result 
 for $\VE$ solutions with respect to convergence of the parameters $\mu_n$ to some \emph{strictly positive} $\mu$.
 % For simplicity, we will assume that all solutions emanate from the same initial datum $u_0$.
 \begin{proposition}
 \label{prop:added} 
   Let $\calE: [0,T]\times \Xs \to \R$ comply with  \eqref{Ezero}, \eqref{Power} and \eqref{uscPower}.   Let $(\mu_n) \subset$ fulfill 
   \[
   \mu_n\to \mu>0 \qquad \text{as $n\to\infty$}.
   \]
   Let $(u_n^0)_n,\, u_0 \subset \Xs$ fulfill 
    \begin{equation}
  \label{conv+en-conv_init}
  u_n^0 \to u_0 \quad \text{and} \quad  \ene 0{u_n^0} \to \ene 0{u_0} \text{ as $n\to\infty$}.
  \end{equation}
   \par
     Then,  there exist  a subsequence $(u_{n_k})_k$  and a curve $u\in \BV([0,T];\Xs)$ such that $u(0)=u_0$,
 \begin{equation}
\label{ptw+en-conv}
 u_{n_k}(t) \to u(t)  \ \text{ and } \ \ene {t}{u_{n_k}(t)} \to \ene t{u(t)} \quad \text{for every } t \in [0,T],
\end{equation}
and $u$ is a $\VE_\mu$ solution to the rate-independent system $\RIS$. 
 \end{proposition}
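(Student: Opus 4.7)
The plan is to follow the same line as the existence proof of Theorem \ref{th:ex-ve}, only replacing the time-discrete$\to$time-continuous limit $\tau\downarrow 0$ with the limit of $\VEa{\mu_n}$ solutions as $\mu_n\to\mu>0$. Accordingly, I would (i) derive uniform a priori bounds on the sequence $(u_n)$ via the energy balance \eqref{enbal-VE} satisfied by each $u_n$; (ii) extract a pointwise limit $u\in\BV([0,T];\Xs)$ by a Helly-type selection; (iii) pass to the limit in the $\cmdn$-stability using $\mu_n\to\mu$; and (iv) pass to the limit in the upper energy estimate and invoke \cite[Lemma 6.2]{SavMin16} to retrieve the lower estimate from the stability of $u$, thus closing the energy balance \eqref{enbal-VE} for $u$ with parameter $\mu$.

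First, the power control from \eqref{Power} combined with the $\VEa{\mu_n}$ energy balance and a Gronwall argument produces a uniform bound $\sup_{n,t}\pert{t}{u_n(t)}\le C$; thanks to \eqref{Ezero} this confines the values of all $u_n$ in a compact subset of $\Xs$ and uniformly bounds the augmented variations $\Vari{\mdn,\vecostnamep{\mu_n}}{u_n}{0}{T}$, hence also $\Vars{\mdn}{u_n}{[0,T]}$. A standard Helly-type selection then yields a subsequence (not relabelled) and a curve $u\in\BV([0,T];\Xs)$ such that $u_n(t)\to u(t)$ for every $t\in[0,T]$. To transfer the stability: the set $\bigcup_n\jump{u_n}\cup\jump{u}$ is at most countable, so for every $t\in[0,T]\setminus\jump{u}$ outside this countable set one has $\ene{t}{u_n(t)}\le\ene{t}{v}+\md{u_n(t)}{v}+\tfrac{\mu_n}{2}\mdn^2(u_n(t),v)$ for all $v\in\Xs$; taking the liminf on the left via the lower semicontinuity of $\calE$ and the limit on the right via $\mu_n\to\mu$ delivers \eqref{stab-VE} at such $t$, and a one-sided continuity argument for $u$ and $\calE$ extends it to all of $[0,T]\setminus\jump{u}$.

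For the upper estimate in \eqref{enbal-VE}, the key step is the $\Gamma$-liminf inequality
\[
\liminf_{n\to\infty}\Vari{\mdn,\vecostnamep{\mu_n}}{u_n}{0}{t}\ge\Vari{\mdn,\vecostnamep{\mu}}{u}{0}{t},
\]
which I expect to be the main technical obstacle. Indeed, $\vecostnamep{\mu}$ depends non-monotonically on $\mu$: the gap term $\tfrac{\mu}{2}\mdn^2$ is monotone increasing in $\mu$, whereas the residual stability function $\rstab{t}{\cdot}$ is monotone decreasing. I would attack it via the decomposition \eqref{serve?}, handling the diffuse part $\mum{u_n}{\mathrm{d}}$ by classical semicontinuity of the $\mdn$-total variation under pointwise convergence, and the jump part separately at each $s\in\jump{u}$: $\varepsilon$-optimal transition curves $\teta_n\colon E_n\to\Xs$ realising $\vecostp{\mu_n}{s}{\lli{u_n}{s}}{\rli{u_n}{s}}$ are extracted by applying the Blaschke theorem to the compact domains $E_n$ and an equicontinuity estimate from their uniformly bounded total variation, yielding a limit transition $\teta\colon E\to\Xs$ between $\lli{u}{s}$ and $\rli{u}{s}$. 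The gap terms pass to the limit because $\mu_n\to\mu$ and $\mdn$ is continuous, while the residual sums pass by the lower semicontinuity of $\calR$ at parameter $\mu$ (a direct consequence of \eqref{Ezero} and the fact that $\mu>0$ prevents any degeneracy in the $\cmdn$-infimum defining $\calR$).

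Coupling this liminf with the convergence of the initial energies \eqref{conv+en-conv_init} and of the power integrals, the latter obtained from \eqref{uscPower} together with the uniform bound $|\pw{s}{u_n(s)}|\le C_P\perto{u_n(s)}\mathrm{e}^{C_PT}$ and reverse Fatou, produces the upper estimate for $u$. Combined with the lower estimate supplied by \cite[Lemma 6.2]{SavMin16} applied to the $\cmdn$-stable limit $u$, one closes \eqref{enbal-VE} with parameter $\mu$; as a by-product, equality in the liminfs along the balance forces $\ene{t}{u_n(t)}\to\ene{t}{u(t)}$ at every $t\in[0,T]$, thereby establishing \eqref{ptw+en-conv} and completing the proof that $u$ is a $\VEa{\mu}$ solution starting from $u_0$.
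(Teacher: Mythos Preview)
Your overall strategy is correct and matches the paper's outline, but the treatment of the key lower semicontinuity
\[
\liminf_{n\to\infty}\Vari{\mdn,\vecostnamep{\mu_n}}{u_n}{0}{t}\ge\Vari{\mdn,\vecostnamep{\mu}}{u}{0}{t}
\]
has a genuine gap. You propose, at each $s\in\jump u$, to take near-optimal transitions $\teta_n\colon E_n\to\Xs$ realising $\vecostp{\mu_n}{s}{\lli{u_n}{s}}{\rli{u_n}{s}}$ and to pass to the limit. This does not work, for two reasons. First, $s$ is a jump point of $u$, not of $u_n$: in general $\lli{u_n}{s}$ and $\rli{u_n}{s}$ do \emph{not} converge to $\lli{u}{s}$ and $\rli{u}{s}$ (indeed $u_n$ may well be continuous at $s$, in which case both one-sided limits equal $u_n(s)\to u(s)$). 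Second, even when $s\in\jump{u_n}$, the single term $\vecostp{\mu_n}{s}{\lli{u_n}{s}}{\rli{u_n}{s}}$ is only one contribution to $\Jvar{\vecostnamep{\mu_n}}{u_n}{\cdot}{\cdot}$; the augmented variations of $u_n$ and $u$ are decomposed along \emph{different} jump sets, so a term-by-term comparison is meaningless.

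What the paper actually does (following the machinery of Theorem \ref{th:lsc} and Proposition \ref{prop:real-tech-diff}) is to view $\eta_k:=\Vari{\mdn,\vecostnamep{\mu_k}}{u_k}{\cdot}{\cdot}$ as Borel measures, pass to a weak${}^*$ limit $\eta$, and show $\eta(\{s\})\ge\vecostp{\mu}{s}{\lli u s}{\rli u s}$ for every $s\in\jump u$. The atom $\eta(\{s\})$ is captured by $\limsup_k\eta_k([\alpha_k,\beta_k])$ for sequences $\alpha_k\uparrow s$, $\beta_k\downarrow s$ with $u_k(\alpha_k)\to\lli u s$, $u_k(\beta_k)\to\rli u s$ (these sequences are supplied by the Helly-type Proposition \ref{prop:compactness}, cf.\ \eqref{diagonal-convs}). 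On each interval $[\alpha_k,\beta_k]$ the curve $u_k$ may have \emph{countably many} jumps $(t_m^k)_m$; one must glue together the continuous pieces of $u_k$ and the optimal transitions $\teta_m^k$ at all of them via the reparametrisation of Step~$1$ of Proposition \ref{prop:real-tech-diff}, obtaining curves $\invcur k$ on compact sets $C_k$. Compactness for curves on varying domains is then provided by \cite[Thm.\ 5.4]{SavMin16}; the Blaschke theorem alone (Kuratowski convergence of $E_n$) plus ``equicontinuity'' is not enough. Finally, the $\liminf$ for the $\mathrm{GapVar}$ contribution requires knowing that every hole of the limit set $C$ is a limit of holes of $C_k$, which is \cite[Thm.\ 5.3]{SavMin16}; your sentence ``the gap terms pass to the limit because $\mu_n\to\mu$ and $\mdn$ is continuous'' hides this nontrivial point.
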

 We will outline the \emph{proof} of Proposition \ref{prop:added} at the end of Sec.\ \ref{ss:4.1}. 
\par
We conclude this section by recalling that,  $\VE$ solutions as well can be characterized in terms of  suitable jump conditions. Namely, it was proved in \cite[Prop.\ 3.8]{SavMin16} that a curve $u\in \BV([0,T];\Xs)$ is a $\VE$ solution to the rate-independent system $\RIS$ if and only if it satisfies \eqref{stab-VE}, the energy-dissipation inequality 
\eqref{localized-enineq},
and the jump conditions 
\begin{equation}
\begin{aligned}
\label{ve-jump-cond}
&
\ene t{\lli u t } - \ene t{u(t)} = \vecost t{\lli ut}{u(t)},
\\
&
\ene t{ u (t) } - \ene t{\rli u t } = \vecost t{u(t)}{\rli u t},
\\
&
\ene t{\lli u t } - \ene t{\rli ut}  = \vecost t{\lli ut}{\rli u t}\,.
\end{aligned}
\end{equation}

%%%%%
\subsection{Main results:   Singular limits of  Visco-Energetic solutions}
\label{ss:2.4}
We now consider a sequence $(\mu_n)_n\subset (0,\infty)$, either converging to $0$, or diverging to $\infty$.
 Accordingly, let $(u_n^0)_n\subset \Xs$ be  a sequence of initial data for  the rate-independent system $\RIS$. Under conditions  \eqref{Ezero}, \eqref{Power} and \eqref{uscPower}, there exists a corresponding sequence of Visco-Energetic solutions $(u_n)_n\subset \BV([0,T];\Xs)$ to the rate-independent system $\RIS$, arising from the  viscous corrections $\corrn uv=\tfrac{\mu_n}2 \mdn^2(u,v)$ and  satisfying the initial condition $u_n(0)=u_n^0$.
 \par
 Our first result addresses the behavior of the sequence $(u_n)_n$ in the case $\mu_n\down 0$, under the \emph{sole} conditions   \eqref{Ezero}, \eqref{Power} and \eqref{uscPower} guaranteeing the existence of Visco-Energetic and Energetic solutions, cf.\ Theorems \ref{th:ex-en} and \ref{th:ex-ve}.
\begin{maintheorem}[Convergence to Energetic solutions as $\mu \down 0$]
\label{th:1}
  Let $\calE: [0,T]\times \Xs \to \R$ comply with  \eqref{Ezero}, \eqref{Power} and \eqref{uscPower}. Let  $(u_n^0)_n,\, u_0\subset \Xs$ fulfill \eqref{conv+en-conv_init}
and suppose that $u_0 \in \stabi \mdn 0$. 
Let $(\mu_n)_n \subset (0,\infty)$ be a null sequence, and, correspondingly, let $(u_n)_n\subset \BV([0,T];\Xs)$ be a sequence of   $\VEn$   solutions to  the rate-independent system $\RIS$  fulfilling $u_n(0)=u_n^0$. 
\par
Then, there exist a subsequence  $(u_{n_k})_k$  and  a curve $u\in \BV([0,T];\Xs)$ such that  $u(0)=u_0$,  convergences \eqref{ptw+en-conv} hold, and 
and $u$ is an Energetic solution to $\RIS$.
\end{maintheorem}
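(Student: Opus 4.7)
My plan is to adapt the time-discretization argument of Theorem~\ref{th:ex-en} to the sequence $(u_n)$ of $\VEn$-solutions, exploiting that the viscous correction $\tfrac{\mu_n}{2}\mdn^2$ vanishes on bounded sets as $n\to\infty$. As a preliminary step, I derive uniform a priori bounds: combining $(\mathrm{E}_{\mdn,\vecostnamep{\mu_n}})$ with $\Vari{\mdn,\vecostnamep{\mu_n}}{u_n}{0}{t}\ge \Vari{\mdn}{u_n}{0}{t}$, the power control \eqref{Power}, the hypothesis \eqref{conv+en-conv_init}, and \eqref{prop-pert}, a Gronwall argument gives
\[
\sup_n\Bigl(\sup_{t\in[0,T]}\perto{u_n(t)}+\Vars{\mdn}{u_n}{[0,T]}\Bigr)\le C.
\]
Together with the coercivity \eqref{Ezero}, a Helly-type selection in the spirit of \cite[Thm.~2.1.24]{MieRouBOOK} yields a (not relabeled) subsequence $u_{n_k}$, a limit $u\in\BV([0,T];\Xs)$ with $u(0)=u_0$, and a nondecreasing $V$ such that $u_{n_k}(t)\to u(t)$ and $V_{u_{n_k}}(t)\to V(t)$ for every $t\in[0,T]$.

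I then pass to the limit in the $\cmdn$-stability. At every $t$ in the complement of the at most countable set $\bigcup_k\jump{u_{n_k}}$, the $\VEn$-solutions satisfy, for each fixed $v\in\Xs$,
\[
\ene{t}{u_{n_k}(t)}\le \ene{t}{v}+\md{u_{n_k}(t)}{v}+\tfrac{\mu_{n_k}}{2}\mdn^2(u_{n_k}(t),v).
\]
Letting $k\to\infty$ and using the lower semicontinuity of $\calE$ from \eqref{4closure-stable-set}, the continuity of $v\mapsto\md{\cdot}{v}$, the uniform boundedness of $\mdn(u_{n_k}(t),v)$, and $\mu_{n_k}\down 0$, I obtain $\ene{t}{u(t)}\le\ene{t}{v}+\md{u(t)}{v}$. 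The closedness of $\stab{\mdn}$ inherent in \eqref{4closure-stable-set}, together with the existence of one-sided limits of $u\in\BV$, then allows me to propagate the stability to every $t\in[0,T]$, yielding $(\mathrm{S}_\mdn)$. Testing $(\mathrm{S}_\mdn)$ with $v=u_{n_k}(t)$ and, conversely, the $\cmdn$-stability of $u_{n_k}$ with $v=u(t)$, combined with \eqref{4closure-stable-set}, also provides the pointwise energy convergence $\ene{t}{u_{n_k}(t)}\to\ene{t}{u(t)}$ for every $t\in[0,T]$, thereby confirming \eqref{ptw+en-conv}.

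For the energy balance I take the $\liminf$ in $(\mathrm{E}_{\mdn,\vecostnamep{\mu_{n_k}}})$: the lower semicontinuity of $\calE$, the lower semicontinuity of $\Vari{\mdn}{\cdot}{0}{t}$ under pointwise convergence, and $\Vari{\mdn,\vecostnamep{\mu_{n_k}}}{u_{n_k}}{0}{t}\ge\Vari{\mdn}{u_{n_k}}{0}{t}$ bound the left-hand side from below by $\ene{t}{u(t)}+\Vari{\mdn}{u}{0}{t}$. On the right, \eqref{conv+en-conv_init}, the upper semicontinuity of the power \eqref{uscPower} (applied pointwise thanks to the energy convergence just obtained), the uniform bound on $\pwn$ inherited from \eqref{Power}, and dominated convergence give
\[
\limsup_k\int_0^t\pw{s}{u_{n_k}(s)}\dd s\le\int_0^t\pw{s}{u(s)}\dd s,
\]
whence the upper energy estimate $\ene{t}{u(t)}+\Vari{\mdn}{u}{0}{t}\le\ene{0}{u_0}+\int_0^t\pw{s}{u(s)}\dd s$. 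The converse lower estimate follows from $(\mathrm{S}_\mdn)$ via the Riemann-sum argument of \cite[Prop.~2.1.23]{MieRouBOOK} (or \cite[Lemma~6.2]{SavMin16}), closing the balance $(\mathrm{E}_\mdn)$.

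The main obstacle I anticipate is the need to establish both global $\mdn$-stability and pointwise energy convergence at \emph{every} $t\in[0,T]$, including the jump points of $u$ and of the approximants $u_{n_k}$. The passage from the dense set of ``good'' times, the complement of $\bigcup_k\jump{u_{n_k}}$, to the whole interval rests crucially on the closedness of $\stab{\mdn}$ and on the $\BV$ regularity of $u$; without these ingredients, the inequality obtained by limit passage in the $\VEn$-dissipation could not be upgraded to the full balance $(\mathrm{E}_\mdn)$ characterizing Energetic solutions.
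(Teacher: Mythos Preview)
Your overall strategy mirrors the paper's, but there is a genuine gap in the step where you ``propagate the stability to every $t\in[0,T]$'' by appealing to the closedness of $\stab{\mdn}$ and the $\BV$ regularity of $u$. Closedness of $\stab{\mdn}$, together with approximation from the co-countable good set, does yield $\lli{u}{t},\rli{u}{t}\in\stabi{\mdn}{t}$ for every $t$. However, at a point $t\in\jump{u}$ lying in the bad set (the paper works with the $\limsup$ set $\tjump=\cap_m\cup_{k\ge m}\jump{u_{n_k}}$ rather than the full union), the value $u(t)$ coincides with neither one-sided limit and is not approximated by $u$ along the good set; nor can you pass to the limit in the $\cmdn$-stability of $u_{n_k}(t)$, since along a subsequence $t\in\jump{u_{n_k}}$ and that inequality is simply not available. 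So your argument does not reach $u(t)\in\stabi{\mdn}{t}$ at such points, and $(\mathrm{S}_\mdn)$ is left unproven there.

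The paper closes this gap by a different, non-trivial route (its Claim~5): it first establishes the \emph{localized} upper energy estimate
\[
\ene{t}{u(t)}+\Vari{\mdn}{u}{s}{t}\le\ene{s}{u(s)}+\int_s^t\pw{r}{u(r)}\dd r
\]
for $s$ ranging over the good set, then lets $s\uparrow t$ (using $\lli{u}{t}\in\stabi{\mdn}{t}$ to control $\limsup_{s\uparrow t}\ene{s}{u(s)}$) to obtain the jump inequality $\ene{t}{u(t)}+\md{\lli{u}{t}}{u(t)}\le\ene{t}{\lli{u}{t}}$. Combined with $\lli{u}{t}\in\stabi{\mdn}{t}$ and the triangle inequality, this forces $u(t)\in\stabi{\mdn}{t}$. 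A related issue affects your claim of energy convergence at \emph{every} $t$: your two-sided testing needs $u_{n_k}(t)$ to be $\cmdn$-stable, which again fails along a subsequence when $t\in\tjump$; your argument therefore only delivers energy convergence on the good set (which suffices for the power integral, but not for \eqref{ptw+en-conv}). The paper obtains the full energy convergence only \emph{after} both $(\mathrm{S}_\mdn)$ and $(\mathrm{E}_\mdn)$ are established, by comparing $(\mathrm{E}_{\mdn,\vecostnamep{\mu_k}})$ with $(\mathrm{E}_\mdn)$ (its Claim~7).
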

\par
We will prove the convergence 
 (along a subsequence) of a sequence of  $\VEn$   solutions, as $\mu_n\uparrow \infty$, to a Balanced Viscosity solution,  
 under the same conditions as in  the existence Theorem \ref{th:ex-bv} for Balanced Viscosity solutions. 
 Hence we need to strengthen \eqref{uscPower}  with  \eqref{uscPower-bis}, and require the chain-rule inequality \eqref{ch-rule-ineq} as well. 
\begin{maintheorem}[Convergence to Balanced Viscosity solutions as $\mu \uparrow \infty$]
\label{th:2}
  Let $\calE: [0,T]\times \Xs \to \R$ comply with  \eqref{Ezero}, \eqref{Power}, \eqref{uscPower-bis}, and  \eqref{ch-rule-ineq}.
  Let  $(u_n^0)_n,\, u_0\subset \Xs$ fulfill 
  \eqref{conv+en-conv_init}.
Let $(\mu_n)_n \subset (0,\infty)$ be a diverging sequence, and, correspondingly, let $(u_n)_n\subset \BV([0,T];\Xs)$ be a sequence of  $\VEn$   solutions to  the rate-independent system $\RIS$   fulfilling $u_n(0)=u_n^0$. 
\par
Then, there exist a subsequence  $(u_{n_k})_k$  and  a curve $u\in \BV([0,T];\Xs)$ such that  $u(0)=u_0$, convergences \eqref{ptw+en-conv} hold,
and $u$ is an Balanced Viscosity solution to $\RIS$.
\end{maintheorem}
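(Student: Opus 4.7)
The strategy is to pass to the limit in the $\cmdn$-stability condition $(\mathrm{S}_{\cmdn})$ and in the energy balance $(\mathrm{E}_{\mdn,\vecostnamep{\mu_n}})$ satisfied by each $u_n$, and to recover a limit curve $u$ that fulfills both the local stability $(\mathrm{S}_{\mdn,\mathrm{loc}})$ and the upper energy inequality $(\mathrm{E}_{\mdn,\bvcostname}^{\mathrm{ineq}})$. The converse (lower) energy inequality then follows from the chain rule \eqref{ch-rule-ineq} in the spirit of \cite[Prop.\ 4.2, Thm.\ 4.3]{MRS12}, exactly as in the existence proof of Theorem \ref{th:ex-bv}. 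I expect the main technical obstacle to be the $\liminf$ inequality for the augmented total variations, because the visco-energetic transitions $\teta_n$ realizing the jump cost live on compact subsets $E_n$ of $\R$ with possibly many holes, whereas $\bvcostname$-optimal transitions are absolutely continuous curves defined on an interval.

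First I would collect the a priori bounds. Using the VE energy balance, the power control \eqref{Power} and Gronwall's lemma, one obtains $n$-uniform bounds on $\sup_{t\in[0,T]}\perto{u_n(t)}$, on $\Vari{\mdn}{u_n}0T$, and on the jump contribution $\Jvar{\Delta_{\vecostnamep{\mu_n}}}{u_n}0T$. Combined with the compactness of sublevels in \eqref{Ezero}, a standard Helly-type extraction followed by a diagonal refinement yields a subsequence $(u_{n_k})_k$ such that $u_{n_k}(t) \to u(t)$ and $\ene{t}{u_{n_k}(t)} \to \ene t{u(t)}$ for every $t\in[0,T]$, with $u \in \BV([0,T];\Xs)$, and such that the scalar functions $V_{u_{n_k}}(\cdot)$ converge pointwise to a nondecreasing function dominating $V_u$. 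The initial condition $u(0)=u_0$ comes from \eqref{conv+en-conv_init}.

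Local stability $(\mathrm{S}_{\mdn,\mathrm{loc}})$ for $u$ is essentially free: as recalled after the statement of Theorem \ref{th:ex-ve}, $\cmdn$-stability already implies local $\mdn$-stability, so $\slope\calE t{u_n(t)} \leq 1$ for every $t \in [0,T]\setminus \jump{u_n}$. Using the lower semicontinuity of the slope in \eqref{uscPower-bis} together with the uniform energy bound, I would first pass to the limit on the cocountable set of $t$ satisfying $t \notin \jump{u_{n_k}}$ for all large $k$, and then cover the remaining continuity points of $u$ by approximation, exploiting the continuity of $u$ at such $t$ and the lsc of the slope along sequences with bounded energy and slope.

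The core obstacle is the passage to the limit on the dissipation side. The diffuse part of $\Vari{\mdn,\vecostnamep{\mu_n}}{u_n}0t$ is controlled by the standard lower-semicontinuity of the pointwise $\mdn$-variation. For each $t \in \jump u$, I would fix nearly optimal transitions $\teta_k : E_k \to \Xs$ between $\lli{u_{n_k}}t$ and $\rli{u_{n_k}}t$ realizing $\tcost{\VE}{t}{\teta_k}{E_k}$, and reparameterize each $\teta_k$ by a suitable arc-length-type parameter incorporating the $\mdn$-variation, the gap-cost and the residual $\rstab{t}{\cdot}$, so as to obtain essentially $1$-Lipschitz curves on intervals $[0,L_k]$ with $L_k$ bounded. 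The Ascoli--Arzel\`a theorem combined with the Kuratowski--Blaschke compactness of closed subsets of $[0,L_\infty]$ recalled in Section \ref{s:2} then produces a limit compact domain and a limit continuous curve $\hat\teta$; the uniform bound on the gap-cost
\[
\sum_{I \in \hole{E_k}} \frac{\mu_{n_k}}{2}\mdn^2(\teta_k(I^-),\teta_k(I^+)) \leq C
\]
together with $\mu_{n_k}\uparrow \infty$ forces the gap-diameters to collapse, so the limit domain is in fact an interval and $\hat\teta$ is absolutely continuous. The key asymptotic $\mu\,\rstab tx \to \tfrac12(\slope\calE tx - 1)_+^2$ as $\mu\uparrow\infty$ combined with a Young-type inequality between the length term and the residual term matches the VE transition cost with the BV integrand $|\hat\teta'|(r)(\slope\calE t{\hat\teta(r)}\vee 1)$ in the limit; this reparameterization-and-compactness argument for transitions in varying domains, flagged by the authors as delicate, is the real technical heart of the proof. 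Summing over the (at most countable) jump set of $u$ yields
\[
\liminf_{k\to\infty}\Vari{\mdn,\vecostnamep{\mu_{n_k}}}{u_{n_k}}0t \geq \Vari{\mdn,\bvcostname}u0t \quad \text{for every } t\in[0,T],
\]
and combining this with the upper-semicontinuity of the power integral from \eqref{uscPower-bis} delivers $(\mathrm{E}_{\mdn,\bvcostname}^{\mathrm{ineq}})$. The chain-rule argument of \cite[Prop.\ 4.2, Thm.\ 4.3]{MRS12}, based on \eqref{ch-rule-ineq} and the already-established $(\mathrm{S}_{\mdn,\mathrm{loc}})$, then provides the opposite inequality and identifies $u$ as a $\BV$ solution.
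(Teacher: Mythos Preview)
Your overall architecture is correct and matches the paper: compactness via Helly, local stability from the lower semicontinuity of the slope, the upper energy estimate via a $\liminf$ inequality on the augmented variations, and the lower estimate from the chain rule. One small slip: the energy convergence $\ene t{u_{n_k}(t)}\to\ene t{u(t)}$ does \emph{not} come from the Helly/diagonal extraction; at that stage you only get convergence to some $\limE(t)\geq \ene t{u(t)}$. Equality is recovered \emph{a posteriori}, once both energy inequalities are available, by comparing the two energy balances.

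The substantive gap is in your jump argument. You propose, for each $t\in\jump u$, to pick a single (nearly) optimal $\VE$ transition $\teta_k:E_k\to\Xs$ between $\lli{u_{n_k}}t$ and $\rli{u_{n_k}}t$ and to pass to the limit in $\tcost{\VE}{t}{\teta_k}{E_k}$. But the quantity you must lower-bound is $\Vari{\mdn,\vecostnamep{\mu_{n_k}}}{u_{n_k}}{\cdot}{\cdot}$, whose jump part is a sum of $\vecostnamep{\mu_{n_k}}$-costs at the jump points of $u_{n_k}$, not of $u$. In general $t$ need not be a jump point of $u_{n_k}$ (then $\lli{u_{n_k}}t=\rli{u_{n_k}}t$ and your transition cost is zero), and even if it is, $\lli{u_{n_k}}t,\rli{u_{n_k}}t$ need not converge to $\lli u t,\rli u t$. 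The mass of $\Varname{\mdn,\vecostnamep{\mu_{n_k}}}$ near $t$ is typically spread over \emph{many} small jumps of $u_{n_k}$ together with continuous pieces, and none of this is captured by a single transition at $t$.

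The paper fixes this by viewing $\Vari{\mdn,\vecostnamep k}{u_k}{\cdot}{\cdot}$ as a measure $\eta_k$, passing to a weak$^*$ limit $\eta$, and then proving $\eta(\{t\})\geq \bvcost t{\lli u t}{\rli u t}$ via sequences $\alpha_k\uparrow t$, $\beta_k\downarrow t$ with $u_k(\alpha_k)\to\lli u t$, $u_k(\beta_k)\to\rli u t$. On each shrinking interval $[\alpha_k,\beta_k]$ one must glue together \emph{all} the continuous pieces of $u_k$ and \emph{all} the optimal $\VE$ transitions at the (possibly countably many) jump points $t_m^k$ of $u_k$ in $[\alpha_k,\beta_k]$ into a single curve $\invcur k$ on a compact set $C_k$, via a rescaling that encodes $\Varname{\mdn}$, the gap cost, and the residual $\calR$. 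Your reparameterization idea and the Kuratowski/Blaschke compactness are exactly the right tools, and your observation that the uniform bound on $\sum_I \tfrac{\mu_k}{2}\mdn^2(\teta(I^-),\teta(I^+))$ forces the holes to collapse as $\mu_k\uparrow\infty$ is the correct mechanism making the limit domain an interval; but they must be applied to this glued object, not to a single transition. The final matching with the $\BV$ integrand is also a bit more delicate than a single Young inequality: one uses the partition representation of $\tcost{\BV}{t}{\invcu}{[0,C^+]}$ and, on subintervals where $\slope\calE t{\invcu(\cdot)}>1$, shows via the $\liminf$ relation $\tfrac12(\slope\calE t x - 1)_+^2 \leq \liminf_k \mu_k\,\rstab{t_k}{x_k}$ that the relevant points of $C_k$ form a \emph{finite} set of uniformly bounded cardinality, which is what makes Young's inequality effective.
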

Both proofs will be carried out throughout Sections
\ref{s:3} and \ref{s:4}. 
%%%%%
\section{Proofs of Theorems \ref{th:1} and \ref{th:2}}
\label{s:3}
\paragraph{\bf A  preliminary compactness result.}  We start with a Helly-type compactness result for a sequence of  $\VEn$ solutions, associated with parameters $(\mu_n)_n$,  which applies both to the limit $\mu_n\down 0$, and to the limit $\mu_n\up \infty$, under the basic conditions  \eqref{Ezero} and  \eqref{Power} on $\calE$. 
The key starting observation is that, since  
\begin{equation}
\label{key-est-var}
 \Vari {\mdn,\vecostnamep{\mu}}{u}0{t} \geq  \Vari {\mdn}{u}0{t}  \qquad \text{for every } u \in \BV([0,T];\Xs)   \text{ and every } \mu>0, 
 \end{equation}
  every $\VE$ solution complies with the 
upper energy estimate of the energy balance \eqref{global-enbal},  cf.\ \eqref{uee-n} below, where the (either vanishing or blowing up) parameters $\mu_n$  no longer feature. From this energy estimate there stem all the a priori estimates   and compactness properties common to the two singular limits $\mu_n \down 0$ and $\mu_n \up\infty$. %  stem.
% \RRR Dovrai aver osservato che ogni successione di soluzioni $\BV$ soddisfa (energetic) UEE. \EEE 
\begin{proposition}[A priori estimates and compactness]
\label{prop:compactness}
  Let $\calE: [0,T]\times \Xs \to \R$ comply with  \eqref{Ezero} and  \eqref{Power}. Consider a sequence $(u_n)_n\subset \BV([0,T];\Xs)$ of curves starting from initial data $(u_0^n)_n\subset \Xs$ converging to some $u_0\in \Xs$ as in   \eqref{conv+en-conv_init}.
Suppose that the curves $u_n$ fulfill for every $n\in \N$ the upper energy estimate
\begin{equation}
\label{uee-n}
\ene t{u_n(t)} + \Vari {\mdn}{u_n}0{t} \leq \ene 0{u_0^n} +\int_0^t \pw s{u_n(s)} \dd s \quad \text{for all } t \in [0,T]\,.
\end{equation}
Set $V_n: =  V_{u_n}$  (cf.\ \eqref{def-variation-func}).  
\par
Then, 
\begin{equation}
\label{energy-variation-bound}
\exists\, C>0 \ \forall\, n \in \N\, : \qquad \sup_{t\in [0,T]}\perto{u_n(t)} + V_n(T) \leq C\,.
\end{equation}
Furthermore, 
there exist a subsequence $k\mapsto n_k$ and functions $u\in  \BV([0,T];\Xs)$, $\limE,\, \limV\in \BV([0,T])$, and $\limP\in L^\infty(0,T)$, such that
\begin{subequations} 
\label{convs}
\begin{align}
&
\label{convs-a}
u_{n_k}(t) \to u(t) && \text{for all } t \in [0,T],
\\
& 
\label{convs-b}
\ene t{u_{n_k}(t)} \to \limE(t) && \text{for all } t \in (0,T],
\\
& 
\label{convs-c}
V_{n_k}(t) \to \limV(t) && \text{for all } t \in (0,T],
\\
& 
\label{convs-d}
\pw t{u_{n_k}(t)} \weaksto \limP && \text{in }L^\infty(0,T),
\end{align}
\end{subequations}
so that $u(0)=u_0$ and 
there hold
\begin{subequations}
\label{props-lim}
\begin{align}
\label{bv-EST}
&
\md {u(s)}{u(t)} \leq \limV(t) - \limV(s) &&  \text{ for all } 0\leq s\leq t \leq T,
\\
& 
\label{en-EST}
\limE(t) \geq \ene t{u(t)} && \text{ for all } t\in (0,T], \text{ with } \limE(0)= \ene0{u_0}.
% \intertext{so that }
% &
% \label{bound-energy}
%\sup_{t\in [0,T]}\perto {u(t)} <\infty\,.
%\\
%&
% \Vf u t - \Vf u s \leq    \limV(t) - \limV(s) && \text{ for all } 0\leq s\leq t \leq T\,.
 %\text{ with } \limV(0)=0,
\end{align}
\end{subequations}
Furthermore,
for every $t\in \jump u$ there exist two sequences $\alpha_k\up t$ and $\beta_k \down t$ such that
\begin{equation}
\label{diagonal-convs}
u_{n_k}(\alpha_k) \to \lli u t \quad \text{and} \quad  u_{n_k}(\beta_k) \to \rli u t\,.
\end{equation}
Finally,
 the functions $(u,\limE,\limV,\limP)$ comply with 
\begin{equation}
\label{limuee}
\limE(t)+ \limV(t) = \limE(s) +\limV(s)+\int_s^t \limP(r)  \dd r \quad \text{for all } 0\leq s \leq t \leq T\,.
\end{equation}
\end{proposition}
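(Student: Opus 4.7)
First I would derive \eqref{energy-variation-bound}, then apply Helly-type compactness. Starting from \eqref{uee-n}, I would add $\md{x_o}{u_n(t)}$ to both sides and use the triangle inequality $\md{x_o}{u_n(t)} \leq \md{x_o}{u_n(0)} + V_n(t)$ to make the $V_n(t)$ contributions cancel, together with the power bound $|\pw s{u_n(s)}| \leq C_P \pert s{u_n(s)}$ from \eqref{Power}. This yields the integral inequality $\pert t{u_n(t)} \leq \pert 0{u_n^0} + C_P\int_0^t \pert r{u_n(r)} \dd r$; Gronwall, combined with the uniform boundedness of $\pert 0{u_n^0}$ ensured by \eqref{conv+en-conv_init}, gives a uniform bound on $\pert t{u_n(t)}$. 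Via \eqref{prop-pert} this controls $\perto{u_n(t)}$, and feeding it back into \eqref{uee-n}, together with the lower bound on $\calE$ inherited from compactness of $\subl C$ in \eqref{Ezero}, controls $V_n(T)$. For the extraction: Helly's theorem applied to the monotone $V_n$ produces a subsequence converging pointwise to some $\limV\in\BV([0,T])$; since $(t,u_n(t))$ lies in the compact sublevel $\subl C$ by \eqref{Ezero}, a metric-valued Helly argument (using $V_{n}$ as a common BV envelope) yields $u_{n_k}(t)\to u(t)$ for every $t\in[0,T]$ with $u\in\BV([0,T];\Xs)$; the real-valued sequence $t\mapsto \ene t{u_{n_k}(t)}$, uniformly bounded and with variation dominated by $V_{n_k}$ plus the uniform $L^\infty$-bound on the power, admits a further Helly extraction to $\limE\in\BV([0,T])$; finally Banach-Alaoglu provides weak* convergence of the uniformly bounded $\pw\cdot{u_{n_k}(\cdot)}$ to some $\limP\in L^\infty(0,T)$. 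The initial condition $u(0)=u_0$ is immediate from $u_n^0\to u_0$.

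\textbf{Properties of the limit and diagonal argument at jumps.} Inequality \eqref{bv-EST} would be obtained by passing to the pointwise limit in $\md{u_{n_k}(s)}{u_{n_k}(t)} \leq V_{n_k}(t) - V_{n_k}(s)$, using continuity of $\mdn$. For \eqref{en-EST}, the lower semicontinuity asserted in \eqref{Ezero} gives $\liminf_k \ene t{u_{n_k}(t)} \geq \ene t{u(t)}$, hence $\limE(t)\geq \ene t{u(t)}$ for every $t\in(0,T]$; the equality $\limE(0)=\ene 0{u_0}$ is forced by \eqref{conv+en-conv_init}. For a jump point $t\in\jump u$, I would pick sequences of common continuity points $\alpha_j\uparrow t$ and $\beta_j\downarrow t$ for $u$ (and for $\limE,\limV$); such points form a co-countable dense set. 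Since $u(\alpha_j)\to\lli u t$, $u(\beta_j)\to\rli u t$, while $u_{n_k}(\alpha_j)\to u(\alpha_j)$ and $u_{n_k}(\beta_j)\to u(\beta_j)$ for each fixed $j$, a standard diagonal extraction along $(j,k)$ produces the required sequences $\alpha_k\uparrow t$, $\beta_k\downarrow t$ realising \eqref{diagonal-convs}.

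\textbf{The limit identity \eqref{limuee}.} The hard part will be the equality \eqref{limuee}. The $\leq$ direction I would obtain by passing to the limit in the \emph{localized} upper energy inequality
\[
\ene t{u_n(t)} + V_n(t) - V_n(s) \leq \ene s{u_n(s)} + \int_s^t \pw r{u_n(r)} \dd r,
\]
which, for $\VE$ solutions, follows from the energy balance \eqref{enbal-VE} on the subinterval $[s,t]$ combined with \eqref{key-est-var}: the weak* convergence $\pw\cdot{u_{n_k}(\cdot)}\weaksto\limP$ handles the integral on the right, and the pointwise convergences of $V_{n_k}$ and $\ene\cdot{u_{n_k}(\cdot)}$ take care of the remaining terms (initially at common continuity points, then extended by monotonicity). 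The reverse inequality is the principal obstacle. My plan here is to exploit the monotonicity of $\Theta_n(t):=\ene t{u_n(t)} + V_n(t) - \int_0^t \pw r{u_n(r)} \dd r$, which is nonincreasing in $t$ from the localized \eqref{uee-n} and satisfies $\Theta_n(0)=\ene 0{u_n^0}\to\ene 0{u_0}$; equivalently, to identify the nonnegative ``defect'' $D(t):=\ene 0{u_0}+\int_0^t \limP-\limE(t)-\limV(t)$ and reabsorb it into an enlarged (still nondecreasing) choice of $\limV$, while checking afterwards that the redefined $\limV$ remains consistent with \eqref{bv-EST} and with the convergence in \eqref{convs-c}. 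Once this identification is made, \eqref{limuee} holds for all $0\leq s\leq t\leq T$ by direct computation.
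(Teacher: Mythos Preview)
Your Gronwall argument for \eqref{energy-variation-bound}, the Helly/Banach--Alaoglu extractions for \eqref{convs}, the lower-semicontinuity derivation of \eqref{bv-EST}--\eqref{en-EST}, and the diagonalisation for \eqref{diagonal-convs} are all correct and match what the paper intends; the paper itself gives no detail beyond invoking \eqref{Power} for the a priori bound and referring to \cite[Thm.~7.2]{SavMin16} for everything else, so on these points you are in fact more explicit than the paper.

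The gap is in your treatment of the \emph{equality} \eqref{limuee}. Your ``reabsorb the defect $D(t)$ into an enlarged $\limV$'' manoeuvre does yield a nondecreasing function for which the identity holds and for which \eqref{bv-EST} survives (since $D$ is nondecreasing by the localised $\le$ inequality you already have), but it destroys \eqref{convs-c}: by construction $V_{n_k}\to\limV$, not $\limV+D$, and nothing in the bare hypothesis \eqref{uee-n}---a one-sided estimate on $[0,t]$ only---forces $D\equiv 0$. You flag the need to ``check afterwards'' this consistency, but there is no mechanism for it at this level of generality. The natural fix is to run Helly not on $V_{u_n}$ but on the monotone function that turns the energy estimate into an \emph{identity} at level $n$ (for $\VE$ solutions this is $t\mapsto\Vari{\mdn,\vecostnamep{\mu_n}}{u_n}{0}{t}$, by the balance $(\mathrm{E}_{\mdn,\vecostnamep{\mu_n}})$); then \eqref{limuee} follows by direct limit passage, and \eqref{bv-EST} still holds because that function dominates $V_{u_n}$. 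In any case, for the actual uses of Proposition~\ref{prop:compactness} in the proofs of Theorems~\ref{th:1} and~\ref{th:2} only the inequality $\limE(t)+\limV(t)\le\limE(s)+\limV(s)+\int_s^t\limP$ is invoked, and this you have already obtained from the localised upper estimate for $\VE$ solutions.
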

The \emph{proof} follows by trivially adapting the argument for \cite[Thm.\ 7.2]{SavMin16}. %\footnote{\RRR Non darei ulteriori dettagli qui... \EEE}
Let us only mention that estimate \eqref{energy-variation-bound} derives from \eqref{uee-n}, where the integral term on the right-hand side involving the power functional is estimated by resorting to the power control \eqref{Power}. 
 As for \eqref{diagonal-convs}, it can be shown by suitably adapting the Helly-type compactness argument yielding \eqref{convs-a}. 
%\footnote{\RRR bisogner\`a forse dare qualche dettaglio sulle stime a priori e sulla \eqref{limuee}...}
\par
In the next Secs.\ \ref{ss:3.1} and \ref{ss:4.2}, we will carry out the proof of Theorem \ref{th:1} and, respectively, outline the argument for Theorem \ref{th:2}. In fact,   in  Section \ref{s:4} we will develop the proof of the main technical lower semicontinuity result underlying the limit passage as $\mu_n\up\infty$ in the Visco-Energetic energy balance  ($\mathrm{E}_{\mdn,\vecostnamep{\mu_n}}$)  and  leading to the upper energy estimate \eqref{en-uee-BV}.
\subsection{Proof Theorem \ref{th:1}}
\label{ss:3.1}
We apply Proposition \ref{prop:compactness} and deduce that there exist a subsequence $(u_{n_k})_k $ of   $\VEa{\mu_{n_k}}$   solutions, and a curve $u\in  \BV([0,T];\Xs)$, such that \eqref{convs}, \eqref{props-lim}, and \eqref{limuee} hold.
 In what follows, 
for simplicity we shall denote the sequence of curves $(u_{n_k})_k$
 by $(u_k)_k$  and accordingly write $\mu_k$ in place of $\mu_{n_k}$. 
%Clearly, $u(0)=u_0$.
We split the argument for proving that  the limiting curve  $u$ is an Energetic solution  in some steps.
\par
\emph{\textbf{Claim $1$:} there holds} %\footnote{\RRR che rapporti fra $\tjump$ e $\jump u$?}}
\begin{equation}
\label{tjump}
\begin{cases}
\limE(t) = \ene{t}{u(t)}, \\ \limsup_{k\to\infty} \pw t{u_{k}(t)} \leq \pw t{u(t)} 
\end{cases}
\quad \text{for all } t \in [0,T]\setminus \tjump  \text{ with  } 
\tjump: =\cap_{m\in \N} \cup_{k \geq m} \jump{u_{k}}\,,
\end{equation}
\emph{i.e., the countable set $\tjump$ is  the $\limsup$ of the sets $( \jump{u_{k}})_k$. 
As a result,}
\begin{align}
\label{limp_pw}
&
\limP(t) \leq \pw t{u(t)} \quad \foraa\, t \in (0,T).
\end{align}
To prove \eqref{tjump} at a fixed  $t\in  [0,T]\setminus \tjump$, we observe that, since 
%we apply the following continuity property, proved in \cite[Lemma 3.11]{SavMin16}: 
%\begin{equation}
%\label{cont_R}
%\left( (t_k,x_k) \to (t,x) \text{ and } \rstab {t_k}{x_k} \to 0  \right) \ \Rightarrow \ \ene{t_k}{x_k} \to \ene tx 
%\end{equation}
%with the choices $t_k \equiv t$, where $t $ is a point in $[0,T]\setminus\tjump$, and $x_k:= u_{n_k}(t)$. Since 
$t\in [0,T]\setminus \jump{u_{k}}$ for every $k\geq m$ and $m \in \N$ a given index (only) depending on $t$, the stability condition
\begin{equation}
\label{stab-k-t}
\ene t{u_{k}(t)} \leq \ene ty + \md {u_{k}(t)}y + \frac{\mu_{k}}2 \mdn^2  (u_{k}(t),y) \qquad  \text{for all } y \in \Xs  \text{ and for all } k \geq m
\end{equation}
holds.
 We choose $y = u(t)$ in \eqref{stab-k-t} and thus deduce that 
$\limsup_{k\to\infty} \ene t{u_{k}(t)} \leq \ene t{u(t)}$. 
  % and therefore we have $\rstab t{u_{n_k}(t)}=0$ for all $k \geq m$ by \eqref{propR}.
   Hence,
%from \eqref{cont_R}
 we conclude the energy convergence  
\begin{equation}
\label{en-conv}
 \ene t{u_{k}(t)}\to \ene t{u(t)} \qquad \text{for all } t \in [0,T]\setminus \tjump,
\end{equation}
whence the first of \eqref{tjump}. The $\limsup$ inequality for the power term in \eqref{tjump}   follows from \eqref{uscPower}. 
Then, since  the set $\tjump$ is negligible, we have  for every $t\in (0,T) $ and  $r\in (0,(T{-}t)\wedge t)$
\begin{equation}
\label{Fatou-power}
\int_{t-r}^{t+r} \limP(s) \dd s \leq \limsup_{k\to\infty}  \int_{t-r}^{t+r}  \pw s{u_{k}(s)} \dd s \leq  \int_{t-r}^{t+r}   \pw s{u(s)} \dd s,
\end{equation}
where the second inequality follows from the second of \eqref{tjump} and  the Fatou Lemma, taking into account
that $\sup_{t\in[0,T]}    \pw t{u_{k}(t)}  \leq C_P \sup_{t\in[0,T]}    \pert t{u_{k}(t)}  \leq C$ by virtue of \eqref{Power}, \eqref{prop-pert}, and estimate \eqref{energy-variation-bound}. 
Therefore, \eqref{limp_pw} ensues upon dividing  \eqref{Fatou-power}  by $r$ and  taking the limit as $r\down 0$. 
\par
\emph{\textbf{Claim $2$:} the curve $u$ complies with}
\begin{equation}
\label{uee-st}
\ene t{u(t)} + \Vari {\mdn}{u}s{t} \leq \ene s{u(s)} +\int_s^t \pw r{u(r)} \dd r \quad \text{for all } t \in (0,T],  \,  s\in (0,t)\setminus \tjump, \text{ and  } s=0.
\end{equation}
The upper energy estimate \eqref{uee-st} ensues from \eqref{limuee}, taking into account \eqref{props-lim}, \eqref{tjump}, and \eqref{limp_pw}.
\par
\emph{\textbf{Claim $3$:}}
\begin{equation}
\label{stab-outside-tjump}
u(t)\in \stabi{\mdn}t \quad \text{for every $t\in [0,T]\setminus \tjump$.} 
\end{equation}
It follows from passing to the limit as $k\to\infty$ in the stability condition \eqref{stab-k-t}.
\par
\emph{\textbf{Claim $4$:}}
\begin{equation}
\label{stab-at-lr-lim}
 \lli ut, \, \rli u t \in \stabi{\mdn}t \quad \text{for every $t\in (0,T)$,\  \  $\rli u0 \in \stabi {\mdn}0$, \ \ $\lli u T \in \stabi \mdn T$.} 
 \end{equation}
 Let us only prove the assertion at $t\in (0,T)$ and for $\rli u t$: since the latter right limit exists, we have that
 $
 \rli u t=\lim_{s\down t,\, s \in (t,T) \setminus \tjump} u(s).
$
Therefore, $ \rli u t \in \stabi{\mdn}t$ follows from the previously obtained \eqref{stab-outside-tjump}, combined with the closedness of the  stable set $\stab \mdn$, cf.\ \eqref{4closure-stable-set}.
\par
\emph{\textbf{Claim $5$:}}
\begin{equation}
\label{stab-at-tjump}
u(t) \in \stabi \mdn t \quad \text{for every } t \in (0,T] \cap \tjump.
 \end{equation}
 \emph{Therefore, $u$ complies with the stability condition \eqref{global-stab}.}
 \\
 We consider the upper energy estimate \eqref{uee-st} written on the interval $[s,t]$, for every $s\in (0,t) \setminus \tjump$, and then take the limit of the right-hand side as $s \uparrow t$.  We use that $ \lli u t=\lim_{s\up t,\, s \in (0,t) \setminus \tjump} u(s)$, and that 
 \begin{equation}
 \label{limsup-ene}
\limsup_{s\up t,\, s \in (0,t) \setminus \tjump}  \ene s{u(s)}  \leq \ene t{\lli u t}.
 \end{equation}
 This follows from applying
the stability condition  $u(s) \in \stabi \mdn s$, which holds at all $s \in (0,t) \setminus \tjump$,  with competitor $y= \lli u  t $. Therefore $  \ene s{u(s)}  \leq \ene s{\lli u t} + \md{u(s)}{\lli u t}$, which yields
\begin{equation}
\label{limsup-1}
\limsup_{s\up t,\, s \in (0,t) \setminus \tjump}  \ene s{u(s)}  \leq \limsup_{s\up t,\, s \in (0,t) \setminus \tjump}  \ene s{\lli u t}.
\end{equation}
In turn,
 \begin{equation}
 \label{limsup-2}
 \limsup_{s\up t,\, s \in (0,t) \setminus \tjump} \left(  \ene s{\lli u t} - \ene t {\lli u t} \right) \dd t  \stackrel{(1)}{\leq}    \limsup_{s\up t}
 \int_s^t | \pw r{\lli  u t } | \dd r \stackrel{(2)}{\leq} C   \limsup_{s\up t}
   (t-s)  =0
 \end{equation}
 with {\footnotesize (1)} due to \eqref{2tfc} and {\footnotesize (2)}   to
  the power-control estimate 
   \begin{equation}
 \label{limsup-3}
  | \pw r{\lli  u t } |   \leq C \perto {\lli u t } \ \leq C\,.
  \end{equation}
 In \eqref{limsup-3} the first inequality ensues from
   \eqref{Power} and  \eqref{prop-pert}, while the second one from the lower semicontinuity of $u\mapsto \perto u $, which gives $\perto {\lli u t } \leq \liminf_{s\up t} \perto {u(s)} \leq C$ thanks to      the energy bound 
    $
    \sup_{t\in [0,T]}\perto {u(t)} \leq C,
    $
    deriving from 
  estimate \eqref{energy-variation-bound} by the lower semicontinuity of $\calF_0$. 
       Combining \eqref{limsup-1} with \eqref{limsup-2} we thus conclude  \eqref{limsup-ene}. 
    We also observe that
    \begin{equation}
    \label{liminf-variations}
    \liminf_{s\up t} \Vari{\mdn}{u}{s}t \geq \md{\lli u t }{u(t)}\,.
    \end{equation}
    On account of \eqref{limsup-ene} and \eqref{liminf-variations},
    from \eqref{uee-st} we deduce the jump estimate
    \begin{equation}
    \label{localized-at-jump}
    \ene t{u(t)} + \md{\lli u t }{u(t)} \leq \ene t{\lli u t}  \qquad \text{for every } t \in (0,T] \cap \tjump.
    \end{equation}
    We combine this with the previously obtained stability condition \eqref{stab-at-lr-lim} to conclude \eqref{stab-at-tjump}. 
 \par
\emph{\textbf{Claim $6$:} the curve $u$ complies with the lower energy estimate}
 \begin{equation}
 \label{lee}
 \ene t{u(t)} + \Vari {\mdn}{u}0{t} \geq \ene 0{u(0)} +\int_0^t \pw r{u(r)} \dd r \quad \text{for all } t \in [0,T],
 \end{equation}
\emph{and thus with the energy balance \eqref{global-enbal}.}
\\
We either  apply \cite[Prop.\ 2.1.23]{MieRouBOOK} or \cite[Lemma 6.2, Thm.\ 6.5]{SavMin16}, to conclude \eqref{lee} from the previously obtained \eqref{global-stab}.
 \par
\emph{\textbf{Claim $7$:}  the convergence of the energies $\ene t{u_k(t)} \to \ene t{u(t)}$ holds at every $t\in [0,T]$}.
\par
It follows from \eqref{convs-b} and \eqref{en-EST} that $\liminf_{k\to\infty} \ene t{u_k(t)}  
\geq \ene t{u(t)}$ for every $t\in [0,T]$. To prove the converse inequality for the $\limsup$, we resort to a by now classical argument based on the comparison of the energy balances \eqref{global-enbal} and \eqref{enbal-VE-INTR0}. Indeed, we have 
\[
\begin{aligned}
\limsup_{k\to\infty} \ene t{u_k(t)}   & \stackrel{(1)}{\leq}
\limsup_{k\to\infty} \ene 0{u_k^0} + \limsup_{k\to\infty} \int_0^t \pw r{u_k(r)} \dd  r 
-\liminf_{k\to\infty} \Vari{\mdn,\vecostnamep{\mu_{k}}}{u_{k}}{0}{t}
\\
& 
\stackrel{(2)}{\leq} \ene 0{u_0} + \int_0^t \pw r{u(r)} \dd r 
-  \Vari {\mdn}{u}0{t}\stackrel{(3)}{=} \ene t{u(t)}\,,
\end{aligned}
\]
with {\footnotesize (1)} due to \eqref{enbal-VE-INTR0}, {\footnotesize (2)} following from the assumed convergence of the initial data \eqref{conv+en-conv_init}, from \eqref{convs-d} combined with \eqref{limp_pw}, and from \eqref{key-est-var} and, finally, 
{\footnotesize (3)} due to  the just obtained energy balance  \eqref{global-enbal}.
\par
This concludes the proof of Theorem \ref{th:1}.  
\QED
\subsection{Proof Theorem \ref{th:2}}
\label{ss:4.2}
Proposition \ref{prop:compactness} ensures that any sequence
 $(u_n)_n$ of  $\VE$ solutions, corresponding to parameters $\mu_n\to\infty$, admits a subsequence
  	$(u_{n_k})_k$  converging to a curve   $u\in \BV([0,T];\Xs)$ in the sense of   \eqref{convs} and  \eqref{props-lim};  as in the proof of Thm.\ \ref{th:1},  hereafter %for simplicity 
	we will write $u_k$,  $\mu_k$, and  $\vecostnamep k$ in place of 
	$u_{n_k}$, $\mu_{n_k}$, and 
	$\vecostnamep{\mu_{k}}$, respectively. 
% hold to a curve whose
%the existence of $u$  is ensured by 
Thanks  to the  chain rule from condition \eqref{ch-rule-ineq},  
 in order to prove  that $u$ is a $\BV$ solution it is  sufficient to verify  the local stability \eqref{loc-stab} and the upper energy estimate \eqref{en-uee-BV}, cf.\ 
\cite[Prop.\ 4.2, Thm.\ 4.3]{MRS12}.
The convergence of the energies  $\ene t{u_k(t)} \to \ene t{u(t)}$ holds at every $t\in [0,T]$ will then follow from comparing the energy balances  \eqref{enbal-VE-INTR0} and 
\eqref{BV-enbal}, similarly as in Claim $7$ of the proof of Thm.\ \ref{th:1}. 
\paragraph{\bf $\vartriangleright$ The local stability condition \eqref{loc-stab}.}
As in the proof of Theorem \ref{th:1}, we introduce the set 
$\tjump := \cap_{m\in \N} \cup_{k\geq m} \jump{u_{k}}$. Since 
 $\cmdn$-stability implies local stability, 
we have that for every $t \in [0,T]\setminus \tjump$ there holds
\begin{equation}
\label{loc-stability-at-k}
\slope \calE t{u_{k}(t)}\leq 1 \quad \text{for all }  k \geq m,
\end{equation}
with $m\in \N$ depending on $t$. 
Taking into  account the energy bound \eqref{energy-variation-bound} as well, we are in a position to exploit the lower semicontinuity property ensured by 
\eqref{uscPower-bis}. Taking the $\liminf_{k\to\infty}$ of \eqref{loc-stability-at-k}, we thus deduce that 
\begin{equation}
\label{loc-stability-at-tjump}
\slope \calE t{u(t)} \leq 1 \quad \text{for all } t \in [0,T]\setminus \tjump.
\end{equation}
We also conclude that 
\begin{equation}
\label{loc-stability-at-left-right-lims}
\slope \calE t{\lli u t},\ \slope \calE t{\rli u t}\leq 1 \quad \text{for all } t \in (0,T),
\end{equation}
and analogously for $\slope \calE 0{\rli u0}$ and $\slope \calE T{\lli uT}$, by arguing in the very same way as for \emph{\bf Claim $4$} in the proof of  Theorem \ref{th:2}. 
Clearly, we  then have the local stability condition at all points in $[0,T]\setminus \jump u$.
\\
\paragraph{\bf $\vartriangleright$ The upper energy estimate \eqref{en-uee-BV}.}  Combining the energy bound \eqref{energy-variation-bound} and
the slope estimate \eqref{loc-stability-at-k}
with convergence \eqref{convs-a} and resorting to \eqref{uscPower-bis}, we conclude that
$
\limsup_{k\to\infty} \pw t{u_{k}(t)} \leq \pw t{u(t)}
$ for all $t\in [0,T]\setminus \tjump$. Therefore, the very same argument as for  \emph{\bf Claim $1$}  in the proof of  Theorem \ref{th:2} yields that $\limP(t) \leq \pw t{u(t)}$ for almost all $t\in (0,T)$. %\footnote{forse potremmo unificare la dimostrazione di questa proprieta'..}
All in all, taking the $\liminf_{k\to\infty}$ in  ($\mathrm{E}_{\mdn,\vecostnamep{\mu_{k}}}$)  and exploiting the initial data convergence \eqref{conv+en-conv_init}, the previously obtained  \eqref{en-EST}, and the above estimate for $\limP$,  we infer that 
\[
\ene T{u(T)} +  \liminf_{k\to\infty} \Vari{\mdn,\vecostnamep{\mu_{k}}}{u_{k}}{0}{T}\leq \ene 0{u(0)} + \int_0^T  \pw r{u(r)} \dd r\,. % \quad \text{for all } 0 \leq t \leq T.
\]
In order to conclude \eqref{en-uee-BV}, 
it  thus remains to show %\footnote{\RRR da spiegare meglio..} 
that % t\footnote{potresti darla nella sola $T$.}
\[
 \liminf_{k\to\infty} \Vari{\mdn,\vecostnamep{\mu_{k}}}{u_{k}}{0}{T} \geq \Vari{\mdn,\bvcostname}{u}{0}{T}\,. % \qquad \text{for all } t \in[0,T].
\]
This will be guaranteed by the upcoming result, whose proof will be developed throughout Section \ref{s:4}.
\QED
\begin{theorem}
	\label{th:lsc}
Let $\calE: [0,T] \times \Xs \to \R$ comply with \eqref{Ezero},  \eqref{Power}, and  \eqref{uscPower-bis}. Let
	 $\mu_k \uparrow \infty$ and  $(u_k)_k,\, u \in \BV([0,T];\Xs)$ fulfill
	\begin{subequations}
	\label{conditions-seqs}
	\begin{align}
	&
	\label{conditions-seqs-est}
	  \exists\, C_F>0 \ \forall\, k \in \N\, : \ 
 \sup_{t\in [0,T]}\perto{u_{k}(t)} \leq C_F\,,
	 \\
	 &
	 \label{conditions-seqs-conv}
	u_k(t) \to u(t)  \quad \text{for every $t\in [0,T]$,}
	\\
	&
	\label{conditions-diag}
	 \forall\, t \in \jump u \ \exists\, (\alpha_k)_k,\, (\beta_k)_k \subset [0,T] \text{ with } \alpha_k\up t, \ \beta_k \down t \text{ and } 
	u_k(\alpha_k) \to \lli u t, \ u_k(\beta_k) \to \rli u t.
	\end{align}
			\end{subequations}
		Then,
		%\footnote{\RRR si dovrebbe vedere dalla notazione per Var che $\mu_k\to\infty$: scrivere
		%$ \Varapex{\mdn,\vecostname}{\mu_k}{u_{k}}{a}{b}$ invece di $ \Vari{\mdn,\vecostname}{u_{k}}{a}{b}$ nella \eqref{desired-lsc}??}
		\begin{equation}
		\label{desired-lsc}
		\liminf_{k\to\infty} \Vari{\mdn,\vecostnamep k}{u_{k}}{a}{b} \geq \Vari{\mdn,\bvcostname}{u}{a}{b} \qquad 
		\text{for all } [a,b]\subset [0,T].
		\end{equation}	
\end{theorem}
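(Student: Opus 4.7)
The plan is to apply the decomposition \eqref{serve?} to both sides of \eqref{desired-lsc} and to localize the analysis around the (at most countable) jump set $\jump u\cap[a,b]$. Given $\varepsilon>0$ I would first select a finite subset $\{t_1<\ldots<t_N\}\subset\jump u\cap(a,b)$ carrying all but $\varepsilon$ of the $\bvcostname$-jump mass, and using \eqref{conditions-diag} associate to each $t_j$ sequences $\alpha_k^j\uparrow t_j$ and $\beta_k^j\downarrow t_j$ with $u_k(\alpha_k^j)\to\lli u{t_j}$ and $u_k(\beta_k^j)\to\rli u{t_j}$. For $k$ large the ``jump windows'' $I_k^j:=[\alpha_k^j,\beta_k^j]$ are pairwise disjoint in $(a,b)$, so by the additivity of $\Varname{\mdn,\vecostnamep k}$ recalled after \eqref{serve?} we can split
\begin{equation*}
\Vari{\mdn,\vecostnamep k}{u_k}{a}{b}=\sum_{j=1}^{N}\Vari{\mdn,\vecostnamep k}{u_k}{\alpha_k^j}{\beta_k^j}+\sum_{j=0}^{N}\Vari{\mdn,\vecostnamep k}{u_k}{\beta_k^j}{\alpha_k^{j+1}},
\end{equation*}
with boundary conventions $\beta_k^0=a$, $\alpha_k^{N+1}=b$.

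On each ``between-jump'' piece the inequality $\vecostnamep k\geq\mdn$ yields the lower bound $\Vari{\mdn}{u_k}{\beta_k^j}{\alpha_k^{j+1}}$; the pointwise convergence $u_k\to u$ together with the partition definition \eqref{def-tot-var} of the standard total variation furnishes the classical lower semicontinuity
\begin{equation*}
\liminf_{k\to\infty}\Vari{\mdn}{u_k}{\beta_k^j}{\alpha_k^{j+1}}\geq\md{\rli u{t_j}}{\lli u{t_{j+1}}}+\Vari{\mdn}{u}{t_j^+}{t_{j+1}^-},
\end{equation*}
where the latter piece (with the obvious meaning of the open endpoints) recovers the diffuse mass and all remaining jump contributions in the open interval. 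Summing and letting $\varepsilon\downarrow 0$ assembles the diffuse part $\mum u{\mathrm{d}}([a,b])$ together with the $\bvcostname$-jump contribution at every $t\notin\{t_1,\ldots,t_N\}$ (which, by the choice of the finite truncation, is below $\varepsilon$ anyway in $\bvcostname$-mass and thus absorbable in the limit).

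The crux of the proof is then to establish, for each selected jump,
\begin{equation*}
\liminf_{k\to\infty}\Vari{\mdn,\vecostnamep k}{u_k}{\alpha_k^j}{\beta_k^j}\geq \bvcost{t_j}{\lli u{t_j}}{\rli u{t_j}}.
\end{equation*}
I would read the restriction $u_k|_{I_k^j}$ as a $\vecostnamep k$-admissible transition, reparameterize it by its cumulated augmented variation on a compact set $E_k\subset[0,L_k]$ of uniformly bounded length, and exploit the Lipschitz-in-time control \eqref{Lip-cont-E} together with $\alpha_k^j,\beta_k^j\to t_j$ to freeze the time parameter at $t_j$ in the limit. Applying Blaschke's theorem to extract a Kuratowski limit $E$ of (a subsequence of) the $E_k$, together with a compactness argument for curves on varying domains, produces a limit transition $\vartheta:E\to\Xs$ joining $\lli u{t_j}$ to $\rli u{t_j}$. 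The divergence of $\mu_k$ forces, via the gap penalty $\tfrac{\mu_k}{2}\mdn^2$ in \eqref{ve-tcost}, the collapse of all non-trivial gaps in the Kuratowski limit, so that $E$ is an interval and $\vartheta$ is absolutely continuous; while the residual term $\rstab{\cdot}{\cdot}$, through the slope lower semicontinuity \eqref{uscPower-bis} together with the quantitative bound $\rstab t x\gtrsim (\slope \calE t x{-}1)_+^2/(2\mu)$ and a Cauchy--Schwarz inequality against the metric derivative of the reparameterization, exactly reproduces in the limit the viscous weight $|\vartheta'|(r)(\slope \calE{t_j}{\vartheta(r)}\vee 1)$ featuring in \eqref{bv-trans-cost}.

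The main obstacle, and precisely the one the authors flag at the end of Section \ref{s:main-results}, lies in this last step: passing from transitions $u_k|_{I_k^j}$ defined on potentially complicated varying compact sets $E_k$ (with gaps inherited from the discrete jumps of $u_k$ and possibly Cantor-like structure) to a genuinely absolutely continuous transition on an interval, while simultaneously matching the residual penalty against the viscous weight. Both the quantitative gap-collapse estimate and the residual-to-slope comparison hinge on $\mu_k\uparrow\infty$ together with the slope lower semicontinuity \eqref{uscPower-bis}, and it is here that the bulk of the technical work of Section \ref{s:4} must be concentrated.
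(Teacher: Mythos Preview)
Your strategy coincides with the paper's: both reduce \eqref{desired-lsc} to the jump--level estimate $\liminf_k\Vari{\mdn,\vecostnamep k}{u_k}{\alpha_k}{\beta_k}\geq\bvcost t{\lli u t}{\rli u t}$ (the paper's Proposition~\ref{prop:real-tech-diff}), and both prove that estimate by gluing, reparameterizing to a compact domain, extracting a Kuratowski limit, collapsing gaps via $\mu_k\uparrow\infty$, and converting the residual $\calR$ into the viscous slope weight through a Moreau--Yosida/Young argument (the paper's Lemmas~\ref{l:absolute-continuity} and~\ref{l:AGS-2}). The only organisational difference is that the paper packages $\Varname{\mdn,\vecostnamep k}$ as Borel measures $\eta_k$ and uses weak$^*$ compactness, which handles the countable jump set of $u$ at once, while you use a finite $\varepsilon$-truncation; both are viable.

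Two points need tightening. First, at each selected jump $t_j$ you only claim $\bvcost{t_j}{\lli u{t_j}}{\rli u{t_j}}$, but $\Jvar{\bvcostname}{u}{a}{b}$ in \eqref{jump-Delta-e-simpler} demands the two-piece sum $\bvcost{t_j}{\lli u{t_j}}{u(t_j)}+\bvcost{t_j}{u(t_j)}{\rli u{t_j}}$, which by the triangle inequality for $\bvcostname$ can be strictly larger. The paper therefore also proves the ``half-window'' bounds \eqref{step3-bis} by splitting $[\alpha_k,\beta_k]$ at $t_j$ and using $u_k(t_j)\to u(t_j)$; you must add this. Second, the sentence ``read the restriction $u_k|_{I_k^j}$ as a $\vecostnamep k$-admissible transition'' skips a genuine step: $u_k$ has its own (countably many) jumps $t_m^k$ inside $I_k^j$, and $\Vari{\mdn,\vecostnamep k}{u_k}{\alpha_k}{\beta_k}$ records the \emph{infimal} VE cost at each of them. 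To build the curve $\invcur k$ on which you run compactness you must, as in the paper's Step~1, explicitly insert an \emph{optimal} VE transition $\teta_m^k:E_m^k\to\Xs$ at every $t_m^k$ (existence from \cite[Thm.~3.14]{SavMin16}) and glue these to the continuous pieces of $u_k$; only then do the uniform energy bound along $\invcur k$ and the identification of $\Vari{\mdn,\vecostnamep k}{u_k}{\alpha_k}{\beta_k}$ with the transition cost of $\invcur k$ become available. Your stated bound $\rstab t x\gtrsim(\slope\calE t x{-}1)_+^2/(2\mu)$ is the right heuristic but does not hold pointwise for fixed $\mu$; what the paper uses (Lemma~\ref{l:AGS-2}) is the corresponding $\liminf$ inequality along sequences $(\mu_k,t_k,x_k)\to(\infty,t,x)$, which is what your argument actually needs.
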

\section{Proof of Theorem \ref{th:lsc}}
\label{s:4}
Let us mention in advance the argument for proving the lower semicontinuity inequality \eqref{desired-lsc} follows the same steps, outlined below,  as those for the lower semicontinuity result \cite[Prop.\ 7.3]{MRS13}
in the context of the limit passage from `viscous' gradient systems to $\BV$ solutions. 
Nevertheless, we have to cope with  the (nontrivial)  technical issues peculiar of the fact that the kind of transitions describing the system behavior at jumps changes upon passing from $\VE$ to $\BV$ solutions. This problem will be addressed in the proof of Proposition  \ref{prop:real-tech-diff}  ahead. % which shall be developed throughout Section \ref{ss:4.1}.
\paragraph{\bf Outline of the proof of Theorem   \ref{th:lsc}.} 
Up to the extraction of a (not relabeled) subsequence and modifying the constant $C_F$ from \eqref{conditions-seqs-est}, we may suppose that 
\begin{equation}
\label{sup-variazioni}
 \sup_k \Vari{\mdn,\vecostnamep k}{u_{k}}{a}{b}\leq C_F, 
\end{equation} 
too.
%\footnote{\RRR Infatti, in vista di futuri problemi tecnici  [vedi i commenti alla formula \eqref{case2.2}], potrebbe essere comodo supporre che $\liminf_{k\to\infty} \Vari{\mdn,\vecostname}{u_{k}}{a}{b} = \lim_{k\to\infty} \Vari{\mdn,\vecostname}{u_{k}}{a}{b}$ e che, infatti, tutti 
%e tre i contributi a $ \Vari{\mdn,\vecostname}{u_{k}}{a}{b}$ (cio\`e, $\mathrm{Var}_{\mdn}$, la $\mathrm{GapVar}$, e il contributo con la $\calR$, danno luogo a dei veri e propri limiti, non solo $\liminf$...}
We introduce a sequence of  non-negative and bounded Borel measures $\eta_k$ by defining them on intervals
%\footnote{\RRR oppure, si deve passare dalla definizione \eqref{def-variation-func} della funzione scalare variazione??}
 via 
\[ 
\eta_k ([a,b]): =   \Vari{\mdn,\vecostnamep k}{u_{k}}{a}{b}  \quad \text{for all } [a,b]\subset [0,T].
\]
In view of \eqref{sup-variazioni}, we have that, up to a further extraction, there exists a Borel measure $\eta$ such that $\eta_k\weaksto \eta$ in duality with $\rmC([0,T])$. 
Observe that, by \eqref{key-est-var},  we have 
\[
\eta([a,b])\geq \limsup_{k\to\infty}\eta_k ([a,b]) \geq \limsup_{k\to\infty} \Vari{\mdn}{u_{k}}{a}{b} \geq \Vari{\mdn}{u}{a}{b} \geq \mum u{\mathrm{d}} ([a,b]),
\]
with $ \mum u{\mathrm{d}} $ the diffuse measure associated with $u$ via \eqref{decompose}. Therefore we obtain 
\begin{equation}
\label{step1}
\eta \geq  \mum u{\mathrm{d}}\,.
\end{equation}
\par
We now exploit Proposition \ref{prop:real-tech-diff} ahead to conclude that, for every $t\in \jump u$ and any two sequences 
$\alpha_k\up t $ and  $\beta_k \down t$ fulfilling \eqref{conditions-diag},
%\begin{equation}
%\label{approx-seqs-jump}
 %\quad \text{such that} \quad  u_k(\alpha_k) \to \lli u t \text{ and } u_k(\beta_k) \to \rli u t 
%\end{equation}
there holds
\begin{equation}
\label{step2}
\eta(\{t\}) \geq \limsup_{k\to\infty}\eta_k ([\alpha_k,\beta_k]) \geq \liminf_{k\to\infty}\eta_k ([\alpha_k,\beta_k]) \geq \bvcost t{\lli u t }{\rli u t}\,.
\end{equation}
Analogously, we can prove that
\begin{equation}
\label{step3-bis}
\limsup_{k\to\infty}\eta_k ([\alpha_k,t]) \geq \bvcost t{\lli u t }{u(t)}, \qquad  \limsup_{k\to\infty}\eta_k ([t,\beta_k]) \geq \bvcost t{u (t) }{\rli u t}\,.
\end{equation}
\par
Arguing in the very same way as in the proof of  \cite[Prop.\ 7.3]{MRS13}, we combine  \eqref{step1}, \eqref{step2}, and \eqref{step3-bis}
with the representation 
\[
\begin{aligned}
&
\Vari {\mdn,\bvcostname}u{a}{b}\\
 &  =  \mum u{\mathrm{d}}([a,b]) +  \Jvar {{\bvcostname}}{u}{a}{b}
\\
& 
=  
  \mum u{\mathrm{d}}([a,b]) +
 \bvcost{a}{u(a)}{\rli u{a}} +  \bvcost{b}{\lli u{b}}{ u(b)} + \sum_{t\in \jump u \cap (a,b)} \left(
 \bvcost{t}{\lli u{t}}{u(t)} {+}  \bvcost{t}{u(t)}{\rli u{t}}\right), 
 \end{aligned}
\]
cf.\ \eqref{serve?}, to 
conclude the desired  lower semicontinuity inequality \eqref{desired-lsc}.
\QED
%%%%
The proof of the upcoming result is  developed throughout  Section \ref{ss:4.1}.
\begin{proposition}
	\label{prop:real-tech-diff}
Let $\calE: [0,T] \times \Xs \to \R$ comply with \eqref{Ezero},  \eqref{Power}, and  \eqref{uscPower-bis}. Let
	 $\mu_k \uparrow \infty$ and  $(u_k)_k,\, u \in \BV([0,T];\Xs)$ fulfill
	\eqref{conditions-seqs} and \eqref{sup-variazioni}. For every $t\in \jump u$, pick two sequences $(\alpha_k)_k,\, (\beta_k)_k$ converging to $t$ and fulfilling \eqref{conditions-diag}. 
	Then,
	\begin{equation}
\label{lsc-dura}
 \liminf_{k\to\infty}   \Vari{\mdn,\vecostnamep k}{u_{k}}{\alpha_k}{\beta_k} \geq \bvcost t{\lli u t }{\rli u t}\,.
\end{equation}
\end{proposition}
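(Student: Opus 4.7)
The plan is to associate, to each $k$, an \emph{absolutely continuous} transition $\teta_k : [0,T_k'] \to \Xs$ joining $u_k(\alpha_k)$ to $u_k(\beta_k)$ whose $\BV$-transition cost at time $t$ is bounded above by $\Vari{\mdn,\vecostnamep k}{u_k}{\alpha_k}{\beta_k}+o(1)$, and then to extract a limiting transition $\teta$ between $\lli u t$ and $\rli u t$ by a compactness argument for AC curves defined on varying intervals. Lower semicontinuity of the $\BV$-cost under this limit will then deliver \eqref{lsc-dura}.

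I would first decompose $\Vari{\mdn,\vecostnamep k}{u_k}{\alpha_k}{\beta_k}$ via \eqref{serve?} as the diffuse variation of $u_k$ plus at most countably many jump contributions $\vecostp k s{\lli{u_k}s}{u_k(s)} + \vecostp k s{u_k(s)}{\rli{u_k}s}$ at the points $s \in \jump{u_k}\cap [\alpha_k,\beta_k]$; at each such $s$ pick an almost-optimal $\VE$-transition $\eta_s^k:E_s^k\to\Xs$ realizing this cost up to $2^{-k}$. Concatenating the continuous sliding segments of $u_k$ with all the $\eta_s^k$ in their natural order, and applying an $\mdn$-arc-length reparameterization to each smooth piece, produces a map $\hat\teta_k:F_k\to\Xs$, 1-Lipschitz on each component of its compact domain $F_k\subset[0,T_k]$. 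The holes of $F_k$ come precisely from the pure-jump portions of the $\eta_s^k$, and the uniform gap bound $\sum_I \tfrac{\mu_k}{2}\mdn^2(\hat\teta_k(I^-),\hat\teta_k(I^+))\leq C_F$ forces each individual hole to satisfy $\mdn(\hat\teta_k(I^-),\hat\teta_k(I^+))\leq \sqrt{2C_F/\mu_k}\to 0$. I would then fill each hole by an $\varepsilon_k$-almost geodesic joining its two endpoints, parameterized at unit speed on an interval of length equal to that $\mdn$-distance; this yields an AC curve $\teta_k:[0,T_k']\to\Xs$ with $|\teta_k'|\leq 1+\varepsilon_k$ a.e.

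The heart of the argument is the piecewise cost comparison
\[
\int_{\mathrm{piece}} \left(\slope \calE t{\teta_k(r)} \vee 1\right)\dd r \;\leq\; (\text{contribution of the piece to } \Vari{\mdn,\vecostnamep k}{u_k}{\alpha_k}{\beta_k}) \,+\, o(1),
\]
which on sliding segments follows from $\cmdn$-stability at $\teta_k(r)$ (yielding $\slope \calE t{\teta_k(r)}\leq 1+o(1)$, so the integrand reduces to $|\teta_k'|$ and matches the $\mdn$-length), and on a pure-jump filling $\gamma$ from $a$ to $b$ reduces to a Young-type bound
\[
\mdn(a,b) + \tfrac{\mu_k}{2}\mdn^2(a,b) + \rstab t a \;\geq\; \int_0^{\mdn(a,b)}\left(\slope \calE t{\gamma(r)}\vee 1\right)\dd r \;-\; o(1),
\]
exploiting the definition of $\calR$, the minimality characterization of pure-jump points, and the shrinking of the integration interval. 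Summing these estimates gives $T_k'\leq C_F+o(1)$ and the global bound $\tcost{\BV}{t}{\teta_k}{[0,T_k']} \leq \Vari{\mdn,\vecostnamep k}{u_k}{\alpha_k}{\beta_k}+o(1)$. A Blaschke/Arzel\`a--Ascoli-type compactness argument for $1$-Lipschitz curves on the varying intervals $[0,T_k']$ extracts a $1$-Lipschitz limit $\teta:[0,T_\infty']\to\Xs$, whose endpoints are forced to be $\lli u t$ and $\rli u t$ by the diagonal convergence \eqref{conditions-diag}. Applying Fatou's lemma together with the slope lower semicontinuity from \eqref{uscPower-bis} then yields $\bvcost t{\lli u t}{\rli u t}\leq \int_0^{T_\infty'}(\slope \calE t{\teta(r)}\vee 1)\dd r \leq \liminf_k \Vari{\mdn,\vecostnamep k}{u_k}{\alpha_k}{\beta_k}$, i.e.\ \eqref{lsc-dura}. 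The \textbf{main obstacle} is precisely the Young-type bound on pure-jump fillings: this is the point at which the \emph{discrete} optimal-transition structure peculiar to $\VE$-solutions must be converted into an \emph{absolutely continuous} transition as required by the $\BV$-cost, and it is here that the quadratic penalty $\tfrac{\mu_k}{2}\mdn^2$ must be shown to absorb the slope excess $(\slope \calE t{\gamma(r)}-1)_+$ in the limit $\mu_k\uparrow\infty$.
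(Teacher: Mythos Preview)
Your overall strategy---build approximating AC curves and pass to the limit---is natural, but the proposal has a genuine gap exactly at the step you flag as the main obstacle: the ``Young-type bound'' on pure-jump fillings. When you insert an almost-geodesic $\gamma$ between $a=\hat\teta_k(I^-)$ and $b=\hat\teta_k(I^+)$, you acquire no control whatsoever over $\slope{\calE}{t}{\gamma(r)}$ at interior points $r$; the slope there may be arbitrarily large or even $+\infty$, and neither $\rstab{t}{a}$ nor the gap term $\tfrac{\mu_k}{2}\mdn^2(a,b)$ says anything about those intermediate values. The definition of $\calR$ and the minimality property of pure-jump points relate only the \emph{endpoints} $a,b$, not an interpolating curve between them. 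So your displayed inequality is not justified, and ``the shrinking of the integration interval'' does not help unless the integrand is uniformly bounded, which it is not in general.

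The paper avoids this difficulty by a different route. It never fills holes at the level of the approximating curves; instead it keeps the reparameterized curves $\invcur k$ defined on compact sets $C_k$, passes to the Kuratowski limit $C$, and shows that in the limit every hole $I$ of $C$ satisfies $\invcu(I^-)=\invcu(I^+)$ (because the individual gap lengths are $O(\mu_k^{-1/2})$), so the \emph{limit} curve extends trivially to an interval. The $\BV$-transition cost of $\invcu$ is then estimated via a partition representation (Lemma~\ref{l:absolute-continuity}), which involves $\inf_{\sigma}\slope{\calE}{t}{\invcu(\sigma)}$ over subintervals rather than pointwise slopes along artificial fillings. On a subinterval where this infimum exceeds $1$, one proves that $[\sigma_{j-1}^k,\sigma_j^k]\cap C_k$ consists of \emph{finitely many} pure-jump points with uniformly bounded cardinality (uniform positivity of $\calR$ there, combined with the discreteness of the set where $\calR>0$ along an optimal $\VE$-transition). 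The slope excess $(\slope{\calE}{t}{\invcu(r_l)}-1)_+$ at each limit point $r_l$ is then controlled by $\liminf_k\sqrt{2\mu_k\,\rstab{t_{m_k}^k}{\invcur k(r_k(l))}}$ via a Moreau--Yosida duality (Lemma~\ref{l:AGS-2}, applied with $\psi(r)=r+\tfrac12 r^2$ so that $\psi^*(S)=\tfrac12((S{-}1)_+)^2$), and only \emph{then} is Young's inequality invoked, pairing this with the gap contribution $\tfrac{\mu_k}{2}\mdn^2$. The essential point is that the slope is evaluated only at limits of points lying in $C_k$---where the residual stability gives genuine information---and never at arbitrary geodesic interpolants.
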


\subsection{Proof of Proposition  \ref{prop:real-tech-diff}}
\label{ss:4.1}
%\begin{proof}
We split the argument in some steps, some of which in turn rely on some technical results proved in the Appendix.
\paragraph{\bf Step $1$: reparameterization.}
The curve $u_k$ has at most countably many jump points $(t_m^k)_{m\in M_k}$ between the points $\alpha_k$ and $\beta_k$.
 We now suitably reparameterize both the continuous pieces of the trajectory $u_k$, as well as the optimal transitions 
$\teta_j^k$ connecting the left and right limits $\lli {u_k}{t_j^k}$ and  $\rli {u_k}{t_j^k}$ at a jump point $t_j^k$. We will then glue all of them together to obtain a sequence of curves $(\invcur k)_k$, defined on  compact sets $(C_k)_k$,  which shall enjoy suitable estimates (cf.\ Step $2$), allowing for a refined compactness argument both for the curves $\invcur k $ and for the sets $C_k$. 
\par
We set 
\[
\newmresc k: = \beta_k -\alpha_k +  \Vari{\mdn,\vecostnamep k}{u_{k}}{\alpha_k}{\beta_k}+ \sum_{m \in M_k} 2^{-m}\,
\]
and define the  rescaling function $\nresc k : [\alpha_k, \beta_k]\to [0,\newmresc k]$   by 
 \[
 \nresc k(t): 
 = t-\alpha_k +    \Vari{\mdn,\vecostnamep k}{u_{k}}{\alpha_k}{t}+ \sum_{\{m \in M_k :\, t_m^k \leq t\}} 2^{-m}\,.
 \]
 Observe that $\nresc k $ is strictly increasing, with jump set $\jump{\nresc k} = (t_m^k)_{m\in M_k}$. We  introduce the notation
 %\footnote{\RRRN
 %\`e corretto definire $C_k$ in 	questo modo?? A me sembra di si'.. in questo modo, per\`o, l'insieme $C_k$ \`e automaticamente connesso.. e allora lo step $4$ dovrebbe essere inutile...
 %}
 \[
 I_m^k: =  (\nresc k(t_m^k-), \nresc k(t_m^k+)),
  \quad I_k: = \cup_{m\in M_k}  I_m^k, \quad  \Lambda_k: =[ \nresc k (\alpha_k), \nresc k (\beta_k)]. 
 \]
 On   $\Lambda_k\setminus I_k$ 
    the inverse $\ninvresc k: \Lambda_k \setminus I_k \to [\alpha_k, \beta_k]$ 
    of $\nresc k $ 
    is well defined and Lipschitz continuous. We set
    \begin{equation}
    \label{invcurk-1}
    \invcur k (s): = (u_k \circ \ninvresc k)(s) \qquad \text{for all } s \in \Lambda_k \setminus I_k\,.
    \end{equation}
    The curve $\invcur k$ is also Lipschitz, and satisfies
    \begin{equation}
    \label{est-bv-invcur}
    \begin{aligned}
    &
     \Vari{\mdn,\vecostnamep k}{\invcur k}{s_0}{s_1} \leq (s_1{-}s_0) \quad \text{for all } [s_0,s_1]\subset \Lambda_k \setminus I_k\,.
     \end{aligned}
    \end{equation}
     We  check \eqref{est-bv-invcur} in the case in which $s_0= \nresc k(t_0)$ and $s_1 = \nresc k(t_1)$, with $t_0 <t_1$ belonging to the same connected component of $[\alpha_k,\beta_k]\setminus (t_m^k)_{m\in M_k}$ (the other case is completely analogous).
    Then, we observe that
    \[
    s_1-s_0 = \nresc k(t_1)-\nresc k(t_0) = t_1-t_0 +  \Vari{\mdn,\vecostnamep k}{u_{k}}{t_0}{t_1}
    \geq  \Vari{\mdn,\vecostnamep k}{\invcur k}{s_0}{s_1}\,. 
    \] 
    \par 
%In order to extend $ \invcur k$ to the whole $C_k$, 
We now  recall 
\cite[Thm.\ 3.14]{SavMin16}, ensuring that at every jump point $t_m^k$ there exists an optimal transition $\teta_m^k$ that is continuous on a compact set $E_m^k$, 
\emph{tight} (i.e.\ it fulfills $\teta_m^k(J^-) \neq \teta_m^k(J^+)$ for every  ``hole'' $J\in \hole  {E_m^k}$), and 
such that 
\begin{equation}
\label{tight-OJT}
\begin{aligned}
&
\lli u{t_m^k} = \teta_m^k ((E_m^k)^-), \qquad \rli u{t_m^k} = \teta_m^k ((E_m^k)^+), \qquad u(t_m^k)\in \teta_m^k(E_m^k), \\
&
\begin{aligned}
\ene {t_m^k}{\lli u{t_m^k}} - \ene {t_m^k}{\rli u{t_m^k}}   & = \vecost{t_m^k}{\lli u{t_m^k}}{\rli u{t_m^k}}  = \tcost{\VE}{t_m^k}{\teta_m^k}{E_m^k}
\\ & 
=  \Vars {\mdn}{\teta_m^k} {E_m^k} + \Gap{\mdn}{\teta_m^k} {E_m^k} + \sum_{r\in E_m^k{\setminus}(E_m^k)^+}  \rstab {t_m^k} {\teta_m^k(r)}\,. 
\end{aligned}
 \end{aligned}
\end{equation}
%where we have used the place-holder
%\begin{equation}
%\label{def-lresc}
%\tcost{\VE}{t_1^k}{\teta_1^k}{E_1^k} = \lresc 1k+ \sum_{r\in E_1^k{\setminus}(E_1^k)^+} \rstab t{\teta_1^k(r)} \quad \text{ }
% \lresc 1k:=  \Vars {\mdn}{\teta_1^k} {E_1^k} + \Gap{\mdn}{\teta_1^k} {E_1^k} \,.
%\end{equation}
We adapt the calculations from \cite[Lemma 5.1]{SavMin16} %perform the change of variables
and define the rescaling function $\rresc mk $ on $ E_m^k $ % \to [\mresc 1k, \mresc1k  + \lresc 1k] $
 by
 %\footnote{\RRRN  nella definizione di $\rresc m k $ ho aggiunto il pezzo $\sum_{r\in  [(E_m^k)^-,t]\setminus (E_m^k)^+} \rstab t{\teta_m^k(r)}$ proprio per avere che $ \rresc mk ((E_m^k)^+) =\nresc k(t_m^k+)$, mi sembra che serva..}
\[
\begin{aligned}
 \rresc mk (t): = &  \frac1{2^m} \frac{t-(E_m^k)^-}{(E_m^k)^+ - (E_m^k)^-} +  \Vars {\mdn}{\teta_m^k} {E_m^k \cap [(E_m^k)^-,t]} 
 \\ & 
 + \Gap{\mdn}{\teta_m^k} {E_m^k \cap [(E_m^k)^-,t]}  +\sum_{r\in  [(E_m^k)^-,t]\setminus (E_m^k)^+}  \rstab {t_m^k}{\teta_m^k(r)}+\nresc k(t_m^k-)
 \end{aligned}
\]
for all $ t \in E_m^k$.
It can be checked that $\rresc mk$ is continuous and strictly increasing,
with  image a compact set  $S_m^k\subset I_m^k $ such that 
%\footnote{\RRRN $ \rresc mk ((E_m^k)^-) =\nresc k(t_m^k-)$ \`e vera perch\'e $\lli u{t_m^k} = \teta_m^k ((E_m^k)^-)$ \`e un punto stabile, quindi 
%$\rstab t{ \teta_m^k ((E_m^k)^-)}=0$, giusto?}
\[
\begin{aligned}
&
(S_m^k)^- =  \rresc mk ((E_m^k)^-) =\nresc k(t_m^k-) \quad \text{ and }
\\
& 
\begin{aligned}
(S_m^k)^+ =   \rresc mk ((E_m^k)^+)  & =  \frac1{2^m} + \Vars {\mdn}{\teta_m^k} {E_m^k} + \Gap{\mdn}{\teta_m^k} {E_m^k}  +\sum_{r\in  E_m^k\setminus (E_m^k)^+}  \rstab {t_m^k}{\teta_m^k(r)}  +\nresc k(t_m^k-) \\ & = \nresc k(t_m^k+)\,.
\end{aligned}
 \end{aligned}
\]
The inverse function   $\siresc mk :  S_m^k  \to E_m^k$ is Lipschitz continuous.
\par
We then introduce the set
\[
C_k: = (\Lambda_k {\setminus} I_k) \cup (\cup_{m\in M_k} S_m^k)\,.
\]
It is not difficult to check that $C_k$ is a closed subset of $\Lambda_k$. We 
 extend  the functions $\ninvresc k$ and 
 $\invcur k $,  so far defined on $\Lambda_k \setminus I_k$, only, to the set $C_k$ 
by setting
\[
\ninvresc k(s) \equiv t_m^k \quad\text{and} \quad
\invcur k (s) : = \teta_m^k( \siresc mk(s)) \qquad \text{whenever } s \in S_m^k \text{ for some  } m \in M_k. 
\] 
Since $  \lli u{t_m^k} = \teta_m^k ((E_m^k)^-) $ and  $ \rli u{t_m^k} = \teta_m^k ((E_m^k)^+)$, we have that the extended curve 
 $\invcur k \in \mathrm{C}(C_k;\Xs) $. Furthermore, $\invcur k \in  \BV(C_k;\Xs)$: 
 indeed, 
\begin{subequations}
\label{possibly-useful}
\begin{equation}
\label{possibly-useful-a}
\begin{gathered}
\Vars {\mdn}{\invcur k} {S_m^k}=
 \Vars {\mdn}{\teta_m^k} {E_m^k} , \qquad  \Gap{\mdn}{\invcur k} {S_m^k} =\Gap{\mdn}{\teta_m^k} {E_m^k}, %\qquad \text{\RRR CHECK} \EEE
 \\
 \sum_{s\in S_m^k{\setminus}\{(S_m^k)^+\}}  \rstab{t_m^k}{\invcur k(s)} =  \sum_{r\in E_m^k{\setminus}\{(E_m^k)^+\}} \rstab{t_m^k}{\teta_m^k(r)}, 
 \end{gathered}
 \end{equation}
 as well as 
 \begin{equation}
 \label{possibly-useful-b}
  \Vars {\mdn}{\invcur k} {S_m^k \cap [s_0,s_1]} \leq  (s_1-s_0) \qquad \text{for all } s_0,s_1\in  S_m^k \text{ with } s_0<s_1.
\end{equation}
\end{subequations} 
%(recall that ). 
\paragraph{\bf Step $2$:  a priori estimates}
%By construction (cf.\ 
%.......
%\eqref{tilde-u-BV}, \eqref{tight-OJT}, and  \eqref{possibly-useful-a}), 
 % and in view of  % \eqref{conditions-seqs-est}, 
  It follows from \eqref{sup-variazioni} and from the fact that 
% and taking into account that
 $(\beta_k {-} \alpha_k) \down 0$, 
%  we have 
  that 
  \begin{equation}
\label{quoted-later}C_k^+ = \newmresc k \leq  \beta_k - \alpha_k+ \Vari{\mdn,\vecostname}{u_k}{\alpha_k}{\beta_k} + 2 \leq  2 C_F 
\end{equation}
(up to modifying the constant $C_F$). 
Moreover,  in view of \eqref{sup-variazioni},  \eqref{est-bv-invcur},  and \eqref{possibly-useful-b} we have 
  \begin{subequations}
  \label{estimates-rescaled-uk}
\begin{align}
&
\label{bound-variations-k}
\sup_{k\in \N} \Vars{\mdn}{\invcur k}{C_k} \leq C, %  \sup_{k\in \N}  \Vari{\mdn,\vecostname}{u_k}{\alpha_k}{\beta_k} \leq C_F
\\
&
\label{1-Lip}
\Vars{\mdn}{\invcur k}{C_k \cap [s_0,s_1]} \leq (s_1{-}s_0) \quad \text{for all } s_0,s_1 \in C_k \text{ with } s_0<s_1 \text{ and all } k \in \N.
\end{align}
 Finally, we remark that 
\begin{equation}
\label{bound-energies-k} 
\sup_{k\in \N} \sup_{s\in C_k} \perto {\invcur k(s)} \leq C_F.
\end{equation}
\end{subequations}
Indeed, we have that 
\[
 \sup_{s \in \Lambda_k {\setminus} I_k} \perto {\invcur k(s)}  = \sup_{t\in [\alpha_k,\beta_k]{\setminus} (t_m^k)_{m\in M_k}} \perto {u_k(t)} \leq C_F
\]
in view of  \eqref{conditions-seqs}. Furthermore, it follows from \cite[Thm.\ 3.16]{SavMin16} that for all $r\in E_m^k$ there holds
\[
\begin{aligned}
\ene{t_m^k}{{\teta}_{m}^k(r)} + \md {{\teta}_{m}^k(r)}{{\teta}_{m}^k((E_m^k)^-)} \leq 
\ene{t_m^k}{{\teta}_{m}^k(r)} + \Vars {\mdn}{\teta_m^k}{E_m^k \cap [(E_m^k)^-,r]}  & \leq \ene{t_m^k}{{\teta}_{m}^k((E_m^k)^-)}
\\ & 
=\ene{t_m^k}{\lli {u_k}{t_m^k}}\,.
\end{aligned}
\]
Therefore, 
\[
 \sup_{s\in S_m^k }\perto{ \invcur k(s)} =  \sup_{r\in E_m^k} \perto{{\teta}_{m}^k(r)} \leq \perto{\lli {u_k}{t_m^k}} \leq C_F\,.
\]
All in all, we conclude \eqref{bound-energies-k}. 
%%%
%\RRR
%Mi sembra che, in  mancanza di una uniforme Lipschitzianit\`a delle $(\invcur k)_k$, non si possa applicare direttamente  il vostro risultato di compattezza \cite[Thm.\ 5.4]{SavMin16}: la sua dimostrazione andr\`a adattata per ottenere questa versione $\BV$. Per adesso ho scritto la dimostrazione di Lemma \ref{l:kur+helly} per essere sicura, poi vedremo se includerla o no... 
\paragraph{\bf Step $3$: compactness} By virtue of estimates \eqref{estimates-rescaled-uk}, we are in a position to apply the 
compactness result  \cite[Thm.\ 5.4]{SavMin16} and conclude that 
 there exist a (not relabeled) subsequence,  a compact set   $C\subset [0,2C_F]$,
 %\footnote{\RRRN Visto che i $C_k$ sono connessi, non sar\`a connesso anche $C$??} %with $C^-=0$,
  and a function $\invcu \in \BV(C;\Xs)$ such that, as $k\to\infty$, there hold
\begin{enumerate}
\item $C_{k} \to C$ \`a la Kuratowski;
\item $\mathrm{graph}(\invcu)\subset \mathrm{Li}_{k\to\infty} \mathrm{graph}(\invcur {k})$;
\item whenever $(s_{k})_k  \in C_{k}$ converge to $s\in C$, then $\invcur {k}(s_{k})\to \invcu(s)$;
\item 
$\invcur {k}((C_{k})^\pm )\to \invcu(C^\pm)$.
\end{enumerate}
Therefore, $\invcu(C^-) =  \lli u t$, and $\invcu(C^+) = \rli u t$. Furthermore, it follows from \eqref{1-Lip} that the curve   $\invcu $   is Lipschitz on $C$.
 Finally, for later use  let us point out that, since 
  the functions $\ninvresc k$ take values in the intervals $[\alpha_k,\beta_k]$ shrinking to the singleton $\{t\}$, there holds 
\begin{equation}
\label{singleton}
\lim_{k\to\infty}  \sup_{s\in C_k} | \ninvresc k(s)- t | =0. 
\end{equation} 
 \paragraph{\bf Step $4$: connectedness of $C$}
Observe that, since the sets $C_k$ are not, in general, connected,
%   we cannot 
%conclude that $C$ is connected. 
 we cannot immediately deduce that $C$ is connected. 
% by the continuity of $\invcur k$. \EEE
 We will however show that,
\begin{equation}
\label{to-holes}
\forall\, I \in \hole C \text{ there holds } \invcu (I^-) = \invcu(I^+) =: \invcu_I.
\end{equation} 
In view of this, we may extend $\invcu$ to the whole interval $[0,C^+]$ by defining
\[
\invcu(s): = \invcu_I \qquad \text{for all } s \in I \quad \text{for all } I \in \hole C.
\]
 Hereafter, we will replace $C$ by $[0,C^+]$. 
We will split the proof of \eqref{to-holes} in two claims.
\par
\emph{\textbf{Claim $1$:} 
 for every $ I \in \hole C $ there exist $J_k $ such that}
\begin{equation}
\label{che-fatica}
J_k \in \hole {C_k} \text{ and } \lim_{k\to\infty} J_k^-=I^-, \quad \lim_{k\to\infty} J_k^+=I^+.
\end{equation}
This follows by repeating the very same arguments as in the proof of  \cite[Thm.\ 5.3]{SavMin16}. 
%with sequences $\sigma_k^\pm \to I^\pm$ such that $ (\sigma_k^-,\sigma_k^+) \in \hole (C_k)$... \RRR corretto questo ragionamento?? \EEE
\par
\emph{\textbf{Claim $2$:} there holds $\invcu (I^-) = \invcu(I^+) $.
} 
In view of the compactness property (3) from Step $3$, % \RRR CHECK, non serve che siano dei punti di continuit\`a per $\invcu$?? \EEE 
there holds   $\invcur k (J_k^\pm) \to \invcu (I^\pm)$. Therefore, 
\[
\begin{aligned}
\md{\invcu (I^-)}{\invcu(I^+)} = \lim_{k\to\infty} \md{\invcur k (J_k^-)}{\invcur k (J_k^+)}  & \leq 
\limsup_{k\to\infty}
 \frac1{2\mu_k^{1/2}}  \left( \mu_k \mdn^2(\invcur k (J_k^-),\invcur k (J_k^+)) + 1\right) \\ &  \leq
\limsup_{k\to\infty} \frac1{\mu_k^{1/2}}   \left( \Vari{\mdn,\vecostname_k}{u_k}{\alpha_k}{\beta_k} +1 \right)  =0, 
\end{aligned}
\]
where we have used Young's equality and  estimate  \eqref{sup-variazioni}. 
  \paragraph{\bf Step $5$: estimate of the transition cost and conclusion of the proof}  With Steps $3$ and $4$ we have shown that the Lipschitz continuous  curve $\invcu$ is defined on the interval $[0,C^+]$  and connects the left and right limits $\lli u t $ and $\rli u t$. 
  We now aim to prove that 
  \begin{equation}
  \label{what-we-want}
   \liminf_{k\to\infty} \Vari{\mdn,\vecostname_k}{u_{k}}{\alpha_k}{\beta_k}   \geq 
  \tcost{\BV}{t}{\invcu}{[0,C^+]}   \geq \bvcost t{\lli u t }{\rli u t},
      \end{equation}
      which will lead to \eqref{lsc-dura}.
  \par
 Indeed, it follows from Lemma \ref{l:absolute-continuity} that 
  \begin{equation}
\label{formula-integrale}
\begin{aligned}
& \tcost{\BV}{t}{\invcu}{[0,C^+]}   = \int_0^{C^+} |\invcu'|(s) \left( \slope \calE t{\invcu(s)} \vee 1 \right)  \dd s    
\\ &  \qquad
=\sup\left\{ \sum_{i=1}^N \md{\invcu(\sigma_{i-1})}{\invcu(\sigma_i)}  \inf_{\sigma \in [\sigma_{i-1},\sigma_i]}\left( \slope \calE t{\invcu(\sigma)} \vee 1 \right)   \, : \ (\sigma_i)_{i=1}^N \in \mathfrak{P}_f([0,C^+]) \right\}\,.
\end{aligned}
\end{equation}
Therefore, in what follows we will prove that
  \begin{equation}
  \label{we-need-this}
   \liminf_{k\to\infty} \Vari{\mdn,\vecostname_k}{u_{k}}{\alpha_k}{\beta_k} \geq   
   \sum_{i=1}^N \md{\invcu(\sigma_{i-1})}{\invcu(\sigma_i)}  \inf_{\sigma \in [\sigma_{i-1},\sigma_i]}\left( \slope \calE t{\invcu(\sigma)} \vee 1 \right) 
  \end{equation}
   for every  $(\sigma_i)_{i=1}^N \in \mathfrak{P}_f([0,C^+])$.
  \par
  Let us consider a given partition  $ (\sigma_i)_{i=1}^N  \in \mathfrak{P}_f([0,C^+])$ and fix an index ${j} \in \{1,\ldots,N\}$. 
   Preliminarily, we observe that, by the compactness property (1)  in Step $3$,
there exist sequences $(\sigma_{j-1}^k)_k$, 
$(\sigma_j^k)_k \subset C_k$ such that 
%or every   
% we have 
%that 
\begin{equation}
\label{approximation-partition}
\sigma_{j-1}^k\to\sigma_{j-1}, \  
\sigma_j^k\to\sigma_j \quad \text{and} \quad 
 \invcur k (\sigma_{j-1}^k)\to\invcu(\sigma_{j-1}),\  
\invcur k (\sigma_j^k)\to\invcu(\sigma_j)\quad \text{as }k \to\infty,
\end{equation}
where the second convergence  follows from   the compactness property (3).
  We  now distinguish two cases
  \begin{enumerate}
  \item $  \inf_{\sigma \in [\sigma_{j-1},\sigma_j]}\left( \slope \calE t{\invcu(\sigma)} \vee 1 \right)= 1 $;
  \item   $   \inf_{\sigma \in [\sigma_{j-1},\sigma_j]}\slope \calE t{\invcu(\sigma)}   > 1 $. 
    \end{enumerate} 
    Clearly, the second case is equivalent to $\inf_{\sigma \in [\sigma_{j-1},\sigma_j]}\left( \slope \calE t{\invcu(\sigma)} \vee 1 \right)  > 1$.

  \noindent
\textbf{Case (1):} 
%we may e.g.\ suppose that   $\slope \calE t{\invcu(\sigma_{j})} \leq 1$, as the argument in the case $  \slope \calE t{\invcu(\sigma_{j-1})} \leq 1$ is completely analogous.
%Then, 
%\[
% \md{\invcu(\sigma_{j-1})}{\invcu(\sigma_j)}   \inf_{\sigma \in [\sigma_{j-1},\sigma_j]}\left( \slope \calE t{\invcu(\sigma)} \vee 1 \right)   =  \md{\invcu(\sigma_{j-1})}{\invcu(\sigma_j)}\,.
%\]
%  \RRR avremo gi\`a dimostrato la continuit\`a di $\invcu$?? \EEE
  In view  of \eqref{approximation-partition}, we have
\begin{equation}
\label{ingredient-step-1}
 \md{\invcu(\sigma_{j-1})}{\invcu(\sigma_j)}  \inf_{\sigma \in [\sigma_{j-1},\sigma_j]}\left( \slope \calE t{\invcu(\sigma)} \vee 1 \right)  =   \lim_{k\to\infty} \md{\invcur k(\sigma_{j-1}^k)}{\invcur k(\sigma_j^k)}\,.
\end{equation}
\textbf{Case (2):}    We have that $\slope \calE t{\invcu(\sigma)}> \delta>1$ for all 
$\sigma \in [\sigma_{j-1},\sigma_j]$.  First of all, we observe that 
\begin{equation}
\label{claim-one}
 \exists\, \bar{\delta} \in (1,\delta) \quad 
\exists\, \bar{k}\in \N %\ \forall\, k \geq \bar k 
\quad  \inf_{k \geq \bar{k}}\,  \inf_{\sigma \in  [\sigma_{j-1}^k,\sigma_j^k] {\cap} C_k}
\slope \calE {\ninvresc k (\sigma)}{\invcur k(\sigma)} \geq \bar\delta\,. 
\end{equation}
To show this, we argue by contradiction and suppose that there exists a (not relabeled) subsequence  along which $ \inf_{\sigma \in  [\sigma_{j-1}^k,\sigma_j^k] \cap C_k}
\slope \calE {\ninvresc k (\sigma)}{\invcur k(\sigma)} \leq1$. Since for every $k\in \N$ the $\inf$ 
 on the compact set $[\sigma_{j-1}^k,\sigma_j^k] \cap C_k$  
is attained by lower semicontinuity of the map $\sigma \mapsto \slope \calE {\ninvresc k (\sigma)}{\invcur k(\sigma)}$, we deduce that there exists a sequence $(\tilde{\sigma}_k )_k$ with $\slope \calE {\ninvresc k (\tilde{\sigma}_k)}{\invcur k(\tilde{\sigma}_k)} \leq1$, converging up to a subsequence to some $\tilde \sigma \in  [\sigma_{j-1},\sigma_j]$. 
Now,  $\ninvresc k (\tilde{\sigma}_k) \to t$ by \eqref{singleton} and $\invcur k (\tilde{\sigma}_k) \to \invcu(\tilde\sigma)$ by the
compactness property (3) from Step $3$. Hence,  using the lower semicontinuity of $|\mathrm{D}\calE|$ granted by \eqref{uscPower-bis} we conclude that 
$\slope \calE {t}{\invcu(\tilde\sigma)} \leq1$, in contradiction with the assumption that
 $\inf_{\sigma \in [\sigma_{j-1},\sigma_j]}\slope \calE t{\invcu(\sigma)}  > 1 $. 
\par
 Observe that \eqref{claim-one} implies that $\rstab{\ninvresc k (\sigma)}{\invcur k(\sigma)}>0 $
for all $ \sigma \in  [\sigma_{j-1}^k,\sigma_j^k] \cap C_k $ and all $ k \geq \bar k. $
We now  deduce    the \emph{uniform} positivity property
\begin{equation}
\label{stronger-form}
\exists\, r>0 \qquad \inf_{k \geq \bar{k}}\, \inf_{\sigma \in  [\sigma_{j-1}^k,\sigma_j^k] {\cap} C_k} \rstab{\ninvresc k (\sigma)}{\invcur k(\sigma)} \geq r\,.
\end{equation}
Indeed,    as for \eqref{claim-one}
we proceed by contradiction: if \eqref{stronger-form} did not hold, there would exist a 
 sequence
  $(\tilde{\sigma}_k )_k$  with $ \rstab{\ninvresc k (\tilde{\sigma}_k)}{\invcur k(\tilde{\sigma}_k)} \to 0$, converging 
  to some $\tilde \sigma \in  [\sigma_{j-1},\sigma_j]$ that would fulfill $\rstab{t}{\invcu (\tilde\sigma)}=0$ by the lower semicontinuity of $\mathcal{R}$.  Now, 
   by property  \eqref{propR},  $\rstab{t}{\invcu (\tilde\sigma)}=0$  
  would  imply that $(t,\invcu(\tilde\sigma)) $
 belongs to the stable set $\mathscr{S}_{\mathsf{D}}$. In turn,  the $\mathsf{D}$-stability condition \eqref{cmdn-stab}  would imply that $\slope \calE {t}{\invcu (\tilde\sigma)}\leq 1$, against the standing assumption that  $\inf_{\sigma \in [\sigma_{j-1},\sigma_j]}\slope \calE t{\invcu(\sigma)}  > 1 $.
\par
Now, \eqref{stronger-form} 
 entails that $\ninvresc k (\sigma) \in (t_m^k)_{m\in M_k}$
for all $ \sigma \in  [\sigma_{j-1}^k,\sigma_j^k] \cap C_k =:  \mathscr{L}_k$. But then,
it is not difficult to realize that the function  $\ninvresc k$ must be constant on 
$ \mathscr{L}_k$. Namely, there exists ${m}_k \in M_k$ such that 
$\ninvresc k (\sigma) \equiv t_{{m}_k}^k$ for all $\sigma \in  \mathscr{L}_k$. 
%Hereafter, for simplicity we will write $\bar{t}_k$ in place of $ t_{\bar{m}_k}^k$.
It was observed in 
\cite[Rmk.\ 3.15]{SavMin16} that the set
$
C_{k}^{\calR}: = \{ s \in S_{{m}_k}^k\setminus \{(S_{{m}_k}^k)^+\}\, : \rstab {t_{{m}_k}^k}{\invcur k(s)}>0\}  $  is discrete. 
Trivially adapting the  argument from \cite[Rmk.\ 3.15]{SavMin16},  from \eqref{stronger-form} we  in fact conclude that 
 for all $k\geq\bar{k}$ the set  $  \mathscr{L}_k \subset C_{k}^{\calR}$  consists of finitely many points 
$ (r_{\ell}^k)_{\ell=1}^{L_k}$, and 
 that the cardinality $L_k $ of the sets  $\mathscr{L}_k$ is uniformly bounded with respect to $k$, i.e.
\begin{equation}
\label{bound-cardinalita}
\sup_{k \geq \bar{k}} L_k \leq C<\infty.
\end{equation}
Furthermore, notice that 
 $r_\ell^k$  is the extremum of a hole of $C_k$ for every $\ell =1, \ldots, L_k$. 
\par\noindent The compactness statement from Step $3$ (cf.\ again \cite[Thm.\ 5.4]{SavMin16}) applies, yielding that, up to a subsequence,
 \begin{enumerate}
\item
 the sets $   ({\mathscr{L}}_k)_k $ converge in the sense of Kuratowski to a %compact  countable 
  finite, thanks to \eqref{bound-cardinalita}, 
 set $\mathscr{L} = (r_l)_{l=1}^L  \subset [\sigma_{j-1},\sigma_j]$,  such that 
 $\sigma_{j-1},\, \sigma_j \in \mathscr{L}$. 
 \item
 for every $r_l \in \mathscr{L}$ there exists a sequence $
 (r_{\ell_k}^k(l))_k$, with $r_{\ell_k}^k(l) \in {\mathscr{L}}_k $ for every $k\in \N$, such that 
 $\invcur k (r_{\ell_k}^k(l)) \to \invcu (r_l)$. From now on, we will use the simplified notation $r_k(l)$ in place of $r_{\ell_k}^k(l)$;
 \item
  whenever $  r_{{\ell_n}}^{k_n} \in \overline{\mathscr{L}}_{k_n}$ converge to some $r_{l} \in \mathscr{L}$ as $n\to\infty$, then 
$\invcur {k_n} ( r_{{\ell_n}}^{k_n}) \to \invcu (r_l)$. 
\end{enumerate}
 %non mi serve a molto, giusto?? \EEE  
 % Since the latter set is discrete, it is then straightforward to conclude that the set $\mathcal{L}_k$ is \emph{finite} for all $k\geq \bar k$. We then consider the partition 
%\[
%\sigma_{j,\ell_0}^k: = \sigma_{j-1}^k <\sigma_{j,\ell_1}^k \leq \ldots< \sigma_{j,\ell_{N_k}}^k =: \sigma_{j}^k 
%\] 
%of $[\sigma_{j-1}^k,\sigma_j^k]$, such that 
%\begin{equation}
%\label{holes-L_k}
%(\sigma_{j,\ell_{m-1}}^k, \sigma_{j,\ell_m}^k) \in \hole{C_k} \quad \text{for all } m =1,\ldots, N_k\,.
%\end{equation}
%For every $\sigma_\ell \in L$ there exists a sequence 
%\[
%\sigma_{j,\ell_m}^k\to \sigma_{j,\ell_m} \in [\sigma_{j-1},\sigma_j] \quad \text{ and } \invcur k (\sigma_{j,\ell_m}^k) \to \invcur (\sigma_{j,\ell_m})\,.
%\]
\par
We now estimate $\md{\invcu(\sigma_{j-1})}{\invcu(\sigma_j)} \inf_{\sigma \in [\sigma_{j-1},\sigma_j]} \left( \slope \calE t{\invcu(\sigma)} \vee 1 \right) $ by interpolating between the points $\sigma_{j-1}$ and $\sigma_j$ the points   $ \mathscr{L}=(r_l)_{l=1}^L$. 
% We define $\hat{\mathscr{L}}: = \{\sigma_{j-1}\} \cup \mathscr{L} \cup \{ \sigma_j\}$ and relabel the points in $\hat{\mathscr{L}}$ in such a way as to have a sequence $(r_l)_{l\in \N}$. 
Thus we have 
%  \RRR piu' precisa  su $\sigma_\ell$... \`e una sottosuccessione??\EEE
\begin{equation}
\label{case2.2}
\begin{aligned}
&
\md{\invcu(\sigma_{j-1})}{\invcu(\sigma_j)} \inf_{\sigma \in [\sigma_{j-1},\sigma_j]} \left( \slope \calE t{\invcu(\sigma)} \vee 1 \right)  
\\
 & \leq
\md{\invcu(\sigma_{j-1})}{\invcu(\sigma_j)}  + \md{\invcu(\sigma_{j-1})}{\invcu(\sigma_j)}  \inf_{\sigma \in [\sigma_{j-1},\sigma_j]} \left( \slope \calE t{\invcu(\sigma)}{-} 1 \right) 
\\
& 
\leq
\md{\invcu(\sigma_{j-1})}{\invcu(\sigma_j)}  +
 \sum_{l=1}^L  \md{\invcu(r_{l-1})}{\invcu(r_l)}\left( \slope \calE t{\invcu(r_l)} {-}1\right)
\\
& 
\stackrel{(1)}{\leq}
 \liminf_{k\to\infty}  \md{\invcur k(\sigma_{j-1}^k)}{\invcur k (\sigma_j^k)} +
 \sum_{l=1}^L  \liminf_{k\to\infty} \md{\invcur k (r_k(l{-}1))}{\invcur k (r_k(l))}
 \sqrt{2\mu_k \rstab {t_{{m}_k}^k}{\invcur k(r_k(l))}}
 %\left( \slope \calE t{\invcu(r_k(l))} -1 \right)
\\
&
\begin{aligned}
\stackrel{(2)}{\leq}
 \liminf_{k\to\infty}  \md{\invcur k(\sigma_{j-1}^k)}{\invcur k (\sigma_j^k)} &  +
  \liminf_{k\to\infty}   \sum_{l=1}^L 
\frac{\mu_k}2 \mdn^2(\invcur k (r_{k}(l{-}1)),\invcur k (r_k(l)))  \\ & +  \liminf_{k\to\infty}  \sum_{l=1}^L     \rstab {t_{{m}_k}^k}{\invcur k(r_k(l))}\,.
\end{aligned}
\end{aligned}
\end{equation}
For {\footnotesize (1)},  we have used that for every  $l=1,\ldots, L$
there exists a sequence $(r_k(l))_k$ fulfilling the aforementioned convergence property (2), and applied the forthcoming 
 Lemma \ref{l:AGS-2}  with the choice $\psi(r) = r +\frac12 r^2$ (cf.\ \eqref{psi-eps-intro}), so that $\psi^*(S) =\frac12 ((S{-}1)_+)^2$, %$E(u): = \ene tu$,
   with  $\tau_k : = \mu_k^{-1}$, with $t_k: = t_{{m}_k}^k \to t$ as $k\to\infty$, and with
   $u_k: = \invcur k(r_k(l)) \to \invcu (r_l)$. 
%  with slight abuse of notation, we shall write $ \calY_{\mu_k^{-1}}^\psi (\calE(t,\invcur k(\sigma_j^k)))$ in place of
 % $ \calY_{\mu_k^{-1}}^\psi (\calE(t,\cdot))(\invcur k(\sigma_j^k))$.  
   We then conclude that   (cf.\ \eqref{psi-MY} ahead for the definition of the generalized Moreau-Yosida approximation $\calY_{\mu_k^{-1}}^\psi (\calE)$) 
\begin{equation}
\label{partition-magic-2}
\begin{aligned}
\left(\slope \calE t{\invcu(r_l)}{-} 1 \right)=
 \left( \slope \calE t{\invcu(r_l)}{-}1 \right)_+
& \leq \liminf_{k\to\infty}\sqrt{2 \mu_k \left( \ene {t_{m_k}^k}{\invcur k (r_k(l))} -  \calY_{\mu_k^{-1}}^\psi (\calE)(t_{m_k}^k,\invcur k(r_k(l))) \right) }
\\&
= \liminf_{k\to\infty} \sqrt{2\mu_k \rstab t{\invcur k(r_k(l))}} \quad  \quad   \text{for all } l =1,\ldots, L. 
\end{aligned}
\end{equation}
Finally, for  {\footnotesize (2)} in  \eqref{case2.2} we have applied  
Young's inequality. % and Fatou's lemma. 
%\footnote{\RRR Ma c'e' un problema! Nell'ultimo passaggio ho usato la disuguaglianza $\sum \liminf_{k} \leq \liminf_{k} \sum $ che non \`e vera, in generale...
%\`E per questo che vorrei sostituire tutti i $\liminf_k$ con dei $\lim_k$. Anche in quel caso, per\`o,  per rendere rigorosa la \eqref{case2.2} dovrei giustificare rigorosamente lo scambio serie--limite... 
%Certo aiuterebbe se la serie fosse infatti una somma, cio\`e se $\mathscr{L}$ fosse un insieme finito, ma come provarlo??}
% aiuterebbe se la somma fosse finita???\EEE 
\par
Observe that the term multiplied by $\mu_k$ featuring on the right-hand side of \eqref{case2.2} involves points that are extrema of holes in $C_k$. Therefore, it is estimated by $\Gap{\mdn}{\invcur k} {C_k} $,
whereas the third term is bounded by $\sum_{s\in S_{{m}_k}^k\setminus \{(S_{{m}_k}^k)^+\}} \rstab {t_{m_k}^k}{\invcur k(s)}$.
 Combining \eqref{ingredient-step-1},  
and \eqref{case2.2}, and summing over all the points of $(\sigma_i)_{i=1}^N \in \mathfrak{P}_f([0,C^+])$, we conclude the desired \eqref{we-need-this}. This finishes the proof  of Theorem \ref{th:lsc}. 
%\footnote{\RRR anche  quest'ultima disuguaglianza 
%deve essere resa rigorosamente..}
\QED
We conclude this section by giving the
\paragraph{\bf Outline of the proof of Proposition \ref{prop:added}.}
The argument borrows some ideas both from the proof of 
Theorem \ref{th:1}, and of Theorem \ref{th:2}.  Let us briefly sketch its steps.
\par
\noindent {\bf $\vartriangleright$ Compactness:}  We again apply Prop.\ \ref{prop:compactness} and  deduce the existence of a subsequence  
$(u_{n_k})_k$  converging to  some   $u\in \BV([0,T];\Xs)$ in the sense of   \eqref{convs} and  \eqref{props-lim};   hereafter %for simplicity 
	we will  again use the short-hands $u_k$,  $\mu_k$, and  $\vecostnamep {k}$ in place of 
	$u_{n_k}$, $\mu_{n_k}$, and 
	$\vecostnamep{\mu_{k}}$, respectively. 
	We will use the notation 
	\[
	\cmdn_{\mu_k}(u,v): = \md uv+ \tfrac{\mu_k}{2} \mdn^2(u,v), \qquad \qquad \cmdn_\mu (u,v): = \md uv+ \tfrac{\mu}{2} \mdn^2(u,v)\,,
	\]
and write $\mathrm{GapVar}_{\mdn}^{\mu_k}$, $\mathrm{GapVar}_{\mdn}^{\mu}$, $\mathcal{R}^{\mu_k}$, $\mathcal{R}^\mu$.
\par
\noindent {\bf $\vartriangleright$  The $\cmdn_\mu$-stability condition:} 	
	As in \emph{Claim $1$} within the proof of Thm.\ \ref{th:1}, we introduce the set
	$\widetilde{J} = \cap_{m\in \N}\cup_{k\geq m} \jump{u_k}$. First, we  prove that the limit curve $u$ fulfills the  stability condition 
	%\eqref{stab-VE}
	 ($\mathrm{S}_{\cmdn_\mu}$)
	at every $t\in [0,T] \setminus \widetilde J$ by passing to the limit  as $k\to\infty $ in the $\cmdn_{\mu_k}$-stability condition for the curves $u_k$, holding on $[0,T]\setminus \jump{u_k}$. Secondly, we deduce the validity of the  $\cmdn_\mu$-stability condition at every $t\in [0,T] \setminus \jump u$ by density argument, similarly as in the proof of Thm.\ \ref{th:1},  \emph{Claim $4$}. Here we exploit the closedness of the $\cmdn_\mu$-stable set $\mathscr{S}_{\cmdn_\mu}$, which is in turn ensured by the lower semicontinuity of $\mathcal{R}^\mu$. 
	 \par
\noindent {\bf $\vartriangleright$  The  upper energy estimate $\leq $  
in ($\mathrm{E}_{\mdn,\vecostname_\mu}$):} 	
We show 	
that
\begin{equation}
\label{uee-VE}
	\ene t{u(t)} + \Vari {\mdn,\vecostname_\mu}{u}0{t} \leq \ene 0{u(0)} +\int_0^t \pw s{u(s)} \dd s \quad \text{for all } t \in [0,T]
	\end{equation}
	by taking the $\liminf_{k\to\infty}$ in the analogous upper energy estimate for the curves $(u_k)_k$. Let us only comment on the proof of the key lower semicontinuity inequality
	 \begin{equation}
	 \label{key-lsc}
	\liminf_{k\to\infty}  \Vari {\mdn,\vecostname_{k}}{u_k}a{b} \geq \Vari {\mdn,\vecostname_\mu}{u}a{b} \qquad \text{for all } [a,b]\subset [0,T],
	 \end{equation}
since
for dealing with the other terms in \eqref{uee-VE} we repeat the 
 very same arguments as in the proofs of Thms.\ \ref{th:1} and \ref{th:2}.  
  \par
  First of all, we may suppose that the sequence $(u_k)_k$ complies with  the conditions \eqref{conditions-seqs}
 of Thm.\ \ref{th:lsc}.
 Along the footsteps of the proof of Thm.\  \ref{th:lsc},
 we introduce the Borel measures
 $\eta_k([a,b]): =  \Vari {\mdn,\vecostname_{k}}{u_k}a{b} $ and show that, up to a subsequence, they converge to a measure $\eta \geq   \mum u{\mathrm{d}}$. It then remains to deduce that
$
 \eta(\{t\}) \geq \vecost t{\lli u t }{\rli u t} \qquad \text{for all } t \in \jump u,
 $
 as well as  the analogue of \eqref{step3-bis},
 to conclude \eqref{key-lsc}. 
  With this aim we adapt the proof of Proposition \ref{prop:real-tech-diff} to show that
\[
 \liminf_{k\to\infty}  \Vari {\mdn,\vecostname_{k}}{u_k}{\alpha_k}{\beta_k} \geq 
 \vecost t{\lli u t }{\rli u t}
\]
 at every  point $t\in \jump u$, and for every pair of sequences $(\alpha_k)_k$, $(\beta_k)_k$ converging to $t$ and fulfilling 
 \eqref{conditions-diag}. 
 Hence, we reparameterize the curves $u_k$ in the very same way as in
  Step $1$ of the proof of Prop.\ 
 \ref{prop:real-tech-diff}.  By virtue of the a priori estimates  from Step $2$,
 the compactness arguments in Step $3$ yield the existence of a Lipschitz continuous limit  curve $\invcu: C \to \Xs$, with $C \Subset [0,\infty)$
 and $\invcu(C^-)= \lli u t$, $\invcu(C^+)= \rli u t$.
 Here, we can no longer replace  $C$ with the interval $[0,C^+]$ as in the proof of 
  Prop.\ 
 \ref{prop:real-tech-diff}, but we can still observe property 
 \eqref{che-fatica},
 based on \cite[Thm.\ 5.3]{SavMin16}. We now show that 
 \begin{equation}
 \label{what-we-want-added}
    \liminf_{k\to\infty} \Vari{\mdn,\vecostname_k}{u_{k}}{\alpha_k}{\beta_k}   \geq 
  \tcost{\VE}{t}{\invcu}{C}   \geq \vecost t{\lli u t }{\rli u t}\,.
  \end{equation}
The $\liminf$-inequality for the $\Varname{\mdn}$ contribution to $\Varname{\mdn,\vecostname_k}$ easily follows from the aforementioned compactness arguments.
For the $\mathrm{GapVar}_{\mdn}^{\mu_k}$-contribution (which depends on the parameter $\mu_k$ via the viscous correction $\tfrac{\mu_k}{2} \mdn^2$), it is essential to use property  \eqref{che-fatica}. 
For the $\mathcal{R}^{\mu_k}$ contribution, we can adapt the arguments from the discussion of Case (2) in Step $5$ of the proof of Prop.\  \ref{prop:real-tech-diff}, also exploiting 
the $\liminf$-estimate 
\[
(t_k\to t, \ x_k \to x ) \ \Rightarrow \ \ \liminf_{k\to\infty} \mathcal{R}^{\mu_k}(t_k,x_k)\geq \mathcal{R}^{\mu}(t,x).
\]
This concludes the proof of \eqref{key-lsc}.

  \noindent {\bf $\vartriangleright$  The  lower energy estimate $\geq $  
in   ($\mathrm{E}_{\mdn,\vecostname_\mu}$):} 	 
It follows from \cite[Thm.\ 6.5]{SavMin16}. Again, the energy convergence $\ene t{u_k(t)} \to \ene t{u(t)}$ for every $t\in [0,T]$ follows from the limit passage in the energy balance.
\QED
%%%%%
  \appendix 
  \section{Auxiliary results}
  \label{s:appendix}
\noindent  We start by fixing the representation formula \eqref{formula-integrale} for the transition cost $\tcost{\BV}{t}{\invcu}{[0,C^+]}$. In the upcoming statement, we replace the functional $u \mapsto \slope \calE t{u} \vee 1 $ by a general 
  \[
  g: \Xs \to \R \quad \text{ positive and lower semicontinuous. }
  \]
\begin{lemma}
\label{l:absolute-continuity}
Let $v \in \AC([a,b];\Xs)$. Then, there holds
\begin{equation}
\label{int-partition}
\int_a^b |v'|(s) g(v(s)) \dd s  =\sup\left\{ \sum_{i=1}^N \md{v(\sigma_{i-1})}{v(\sigma_i)} \inf_{\sigma \in [\sigma_{i-1},\sigma_i]} g(v(\sigma))  \, : \ (\sigma_i)_{i=1}^N \in \mathfrak{P}_f([a,b]) \right\}=: \mathsf{S}\,.
\end{equation}
 In particular, the map $s \mapsto |v'|(s) g(v(s))$ is integrable on $[a,b]$ if and only if $
\mathsf{S}<\infty.$
\end{lemma}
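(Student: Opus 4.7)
My plan is to split the equality into the easy inequality $\mathsf{S} \leq \int_a^b |v'|(s)\,g(v(s))\dd s$ and the harder one $\mathsf{S} \geq \int_a^b |v'|(s)\,g(v(s))\dd s$. For the first direction, I would fix a partition $(\sigma_i)_{i=0}^N \in \mathfrak{P}_f([a,b])$ and observe that $\inf_{\sigma \in [\sigma_{i-1},\sigma_i]} g(v(\sigma)) \leq g(v(s))$ for every $s \in [\sigma_{i-1},\sigma_i]$, while $\md{v(\sigma_{i-1})}{v(\sigma_i)} \leq \int_{\sigma_{i-1}}^{\sigma_i} |v'|(r)\dd r$ by \eqref{metric_dev}. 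Multiplying these two inequalities block-by-block and summing over $i$ immediately gives the bound.

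For the reverse inequality I would use a two-scale refinement argument based on the identity, due to the absolute continuity of $v$, that on every sub-interval $[\sigma_{i-1},\sigma_i]$ the metric length $\Vars{\mdn}{v}{[\sigma_{i-1},\sigma_i]}$ coincides with $\int_{\sigma_{i-1}}^{\sigma_i}|v'|(s)\dd s$ (cf.\ \cite[Sec.\ 1.1]{AGS08}) and can therefore be approximated by finite chord sums. Concretely, I would: (i) fix a coarse partition $\mathcal{P}_0 = (\sigma_i)_{i=0}^N$, set $c_i := \inf_{\sigma \in [\sigma_{i-1},\sigma_i]} g(v(\sigma)) \in [0,\infty)$ (finite since $g\circ v$ is lower semicontinuous on the compact block), and introduce the associated step function $h_{\mathcal{P}_0}(s) := c_i$ on $[\sigma_{i-1},\sigma_i)$; (ii) given $\eps>0$, on each block refine to a sub-partition $\sigma_{i-1}=\tau_{i,0}<\ldots<\tau_{i,k_i}=\sigma_i$ whose chord sum approximates $\int_{\sigma_{i-1}}^{\sigma_i}|v'|(s)\dd s$ within an error $\eps/(N(c_i{+}1))$; (iii) since $\inf_{[\tau_{i,j-1},\tau_{i,j}]} g(v(\cdot)) \geq c_i$, the assembled refined partition is a competitor in the definition of $\mathsf{S}$ satisfying
\[
\mathsf{S} \geq \sum_{i=1}^N c_i \sum_{j=1}^{k_i}\md{v(\tau_{i,j-1})}{v(\tau_{i,j})} \geq \int_a^b |v'|(s)\,h_{\mathcal{P}_0}(s)\dd s - \eps,
\]
so by the arbitrariness of $\eps$, $\mathsf{S} \geq \int_a^b |v'|(s)\,h_{\mathcal{P}_0}(s)\dd s$.

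To conclude, I plan to take a sequence of refining coarse partitions $\mathcal{P}_0^{(n)}$ with meshes tending to zero. The lower semicontinuity of $g$ combined with the continuity of $v$ ensures that, outside the countable set of partition points, $h_{\mathcal{P}_0^{(n)}}(s) \uparrow g(v(s))$ as $n\to\infty$; since the refinement makes the sequence $(h_{\mathcal{P}_0^{(n)}})_n$ monotone non-decreasing and dominated by $g\circ v$, monotone convergence then yields $\int_a^b |v'|(s)\,h_{\mathcal{P}_0^{(n)}}(s)\dd s \to \int_a^b |v'|(s)\,g(v(s))\dd s$, which combined with the previous step gives $\mathsf{S} \geq \int_a^b |v'|(s)\,g(v(s))\dd s$. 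The ``in particular'' statement about integrability is then a tautology from the established equality.

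The main obstacle is the two-scale refinement: one must simultaneously approximate the metric total variation on each coarse block and keep the weight $c_i$ frozen, and the device of weighting the error on the $i$-th block by $(c_i{+}1)^{-1}$ makes the cumulative error telescope to $\eps$ irrespective of how large the $c_i$ are. A secondary technicality is that the countable set of partition points must be checked to be $|v'|\dd s$-negligible (it is, being countable hence Lebesgue-null), so the pointwise convergence used above is enough for monotone convergence. The argument goes through uniformly whether or not $|v'|\,g\circ v$ is integrable, which is precisely what makes the final equivalence statement follow at once.
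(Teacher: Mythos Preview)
Your argument is correct, but the route for the reverse inequality differs from the paper's. You use a two-scale refinement: freeze the weight $c_i$ on each coarse block, approximate the metric length there by chord sums, and then let the coarse mesh shrink, invoking monotone convergence for the step functions $h_{\mathcal{P}_0^{(n)}}\uparrow g\circ v$. The paper instead takes a single sequence of partitions with fineness $\tau\downarrow 0$, rewrites the partition sum as an integral of the product of the ``difference quotient'' $\frac{\md{v(\underline{\sigma}_\tau(s))}{v(\overline{\sigma}_\tau(s))}}{\overline{\sigma}_\tau(s)-\underline{\sigma}_\tau(s)}$ and the (attained) infimum $g(v(\sigma_{\min,\tau}(s)))$, and applies Fatou's lemma using the a.e.\ convergence of the first factor to $|v'|(s)$ and the lower semicontinuity of $g\circ v$ for the second. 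The paper's device is shorter (one limit rather than two nested ones) and avoids the bookkeeping of the $\eps/(N(c_i{+}1))$ error splitting; your approach, on the other hand, separates cleanly the approximation of the length from the convergence of the weight and makes the monotone structure explicit, which can be convenient when the integrand is more complicated or when one wants quantitative control.
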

\begin{proof}
Let us fix $(\sigma_i)_{i=1}^N \in \mathfrak{P}_f([a,b]) $.  Observe that
\[
 \md{v(\sigma_{i-1})}{v(\sigma_i)} \inf_{\tilde\sigma \in [\sigma_{i-1},\sigma_i]} g(v(\tilde\sigma))  \stackrel{(1)}{\leq} \int_{\sigma_{i-1}}^{\sigma_i} |v'|(\sigma)  \inf_{\tilde\sigma \in [\sigma_{i-1},\sigma_i]} g(v(\tilde\sigma))  \dd \sigma
 \leq  \int_{\sigma_{i-1}}^{\sigma_i} |v'|(\sigma)   g(v(\sigma))  \dd \sigma
\]
with {\footnotesize (1)} due to  \eqref{metric_dev}. 
Therefore, upon summing up over the index $i=1,\ldots, N$ and using that $(\sigma_i)_{i=1}^N $ is arbitrary, we conclude 
\[
\int_a^b |v'|(s) g(v(s)) \dd s \geq \mathsf{S}\,.
\]
\par
As for the converse inequality, we now consider a partition $a=\sigma_1<\ldots<\sigma_i <\ldots = \sigma_N=b$ with fineness $\tau: = \max_{i=1,\ldots,N} (\sigma_i - \sigma_{i-1})$ and introduce the functions
\[
\underline{\sigma}_\tau,\, \overline{\sigma}_\tau : [a,b] \to [a,b] \quad  \text{ defined by } \quad \begin{cases}
\overline{\sigma}_\tau(s): = \sigma_i \quad \text{if } s \in (\sigma_{i-1},\sigma_{i}],
\\
\underline{\sigma}_\tau(s): = \sigma_{i-1} \quad \text{if } s \in [\sigma_{i-1},\sigma_{i}),
\end{cases} 
\]
with $\underline{\sigma}_\tau(b): = b$ and $\overline{\sigma}_\tau(a): = a$. 
Taking into account the definition \eqref{m-derivative} of the metric derivative $|v'|$, it is a standard matter to check that, on the one hand,
\begin{equation}
\label{point-wise-convergence}
\lim_{\tau \down 0}
\frac1{(\overline{\sigma}_\tau(s){-} \underline{\sigma}_\tau(s))}  \md{v(\underline{\sigma}_\tau(s))}{v(\overline{\sigma}_\tau(s))} \to |v'|(s) \qquad \text{for almost all } s \in (a,b)\,. 
\end{equation}
On the other hand, 
exploiting the lower semicontinuity of $g$, 
we observe that for every $s\in [a,b]$ there exists $\sigma_{\mathrm{min},\tau}(s) \in [\underline{\sigma}_\tau(s),\overline{\sigma}_\tau(s)]$ such that
\[
\inf_{\sigma \in [\underline{\sigma}_\tau(s),\overline{\sigma}_\tau(s)]} g(v(\sigma)) = g (v(\sigma_{\mathrm{min},\tau}(s))).
\]
Since $\sigma_{\mathrm{min},\tau}(s) \to s $ as $\tau\down 0$, by the continuity of $v$ and the lower semicontinuity of $g$ we then have 
\[
\liminf_{\tau\down 0}  g (v(\sigma_{\mathrm{min},\tau}(s))) \geq g(v(s)) \qquad \text{for all } s\in [a,b].
\]
Therefore, by the Fatou Lemma  we have
\[
\begin{aligned}
\mathsf{S} & \geq
\liminf_{\tau\down0} \sum_{i=1}^N \md{v(\sigma_{i-1})}{v(\sigma_i)} \inf_{\sigma \in [\sigma_{i-1},\sigma_i]} g(v(\sigma))  \\
 & = \liminf_{\tau\down0}  \int_a^b \frac1{(\overline{\sigma}_\tau(s){-} \underline{\sigma}_\tau(s))}  \md{v(\underline{\sigma}_\tau(s)}{v(\overline{\sigma}_\tau(s))}\,   g (v(\sigma_{\mathrm{min},\tau}(s))) \dd s &  \geq \int_a^b |v'|(s) g(v(s)) \dd s\,.
 \end{aligned}
\]
and we then conclude \eqref{int-partition}.
\end{proof}
\par
We conclude this Appendix by  extending the \emph{duality formula} 
from \cite[Lemma 3.1.5]{AGS08} for the  (squared) metric  slope  $|\mathrm{D} \calE|^2(t,\cdot)$,  $t\in [0,T]$ fixed, namely
%of  a lower semicontinuous functional $E: \Xs \to \R$, i.e.\
\begin{equation}
\label{duality-quadratic}
\begin{gathered}
\frac12 |\mathrm{D} \calE|^2(t,u)= \limsup_{\tau \down 0} \frac{\calE(t,u)-\calE_\tau(t,u)}{\tau} \quad \text{with } 
\\
\calE_\tau(t,u): = \inf_{v\in \Xs} \left\{ \frac1{2\tau} \mdn^2(u,v) + \calE(t,v) \right\} \quad \text{the Moreau-Yosida approximation of } \calE(t,\cdot)
\end{gathered}
\end{equation}
(with slight abuse of notation). 
We consider
 the case in which  the  dissipation potential underlying the definition of Moreau-Yosida approximation  is no longer  the quadratic
 $\psi(r): = \frac12 r^2$,
  but a general
  function
  %\footnote{\RRR Mi sembra che *non* serva, per lo meno nella dimostrazione  del Lemma \ref{l:AGS}, che $ \lim_{r\up\infty}\frac{\psi(r)}r =\infty. $. Questa propriet\`a ulteriore, insieme alla coercivit\`a di $E$, dovrebbe servire solo a garantire l'esistenza di un minimo per la Moreau-Yosida.. }
  \begin{equation}
\label{superlinear-case}
\psi: [0,\infty)\to [0,\infty) \text{ convex, l.s.c., with } \psi(0)=0 \text{ and }   \lim_{r\up\infty}\frac{\psi(r)}r =\infty. 
\end{equation}
With $\psi$ we may associate  the \emph{generalized Moreau-Yosida} approximation 
%$\calY_\psi(
%\calE)$ 
of the  functional  $\calE(t,\cdot): \Xs \to \R$, via the formula (again, with slight abuse of notation, we write $ \calY_\tau^\psi(\calE)(t,u)$ in place of $ \calY_\tau^\psi(\calE(t,\cdot))(u)$) 
\begin{equation}
\label{psi-MY}
 \calY_\tau^\psi(\calE)(t,u): = \inf_{v\in \Xs} \left(  \tau \psi \left( \frac{\md{u}v}\tau \right) +\calE(t,v)  \right) \qquad \text{for } (t,u)\in [0,T]\times \Xs,\ \tau>0. 
\end{equation}
Combining the coercivity condition \eqref{Ezero} with the superlinear growth of $\psi$, it is straightforward to check that 
\[
M_\tau^\psi(\calE)(t,u):=  \mathrm{Argmin}_{v\in \Xs} \left( \tau \psi \left( \frac{\md{u}v}\tau \right)+ \calE(t,v)  \right) \neq \emptyset \qquad  \text{for all  } (t,u)\in [0,T]\times \Xs,\ \tau>0. 
\]
%the $\inf$ in \eqref{psi-MY} is attained: we denote by
%We denote by
%\[
%M_\tau^\psi(E)(u): = \mathrm{argmin}_{v\in \Xs} \left( E(v) + \tau \psi \left( \frac{\md{u}v}\tau \right) \right) 
%\]
We have the following counterpart to 
 \cite[Lemma 3.1.5]{AGS08}. 
\begin{lemma}
\label{l:AGS}
There holds
\begin{equation}
\label{ext-duality}
 \psi^* \left(|\mathrm{D} \calE|(t,u) \right) = \limsup_{\tau\to 0} \frac{\calE(t,u) - \calY_\tau^\psi(\calE)(t,u)} \tau \qquad \text{for all } (t,u) \in [0,T]\times\Xs. 
\end{equation}
\end{lemma}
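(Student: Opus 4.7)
The plan is to prove the two opposing inequalities in \eqref{ext-duality} separately, adapting the scheme of \cite[Lemma 3.1.5]{AGS08} for the quadratic case. Throughout I fix $(t,u)\in [0,T]\times\Xs$ and rely on the fact that, thanks to the superlinear growth of $\psi$, its Legendre conjugate $\psi^*(S)=\sup_{s\ge 0}(sS-\psi(s))$ is a finite, convex, non-decreasing, continuous function on $[0,\infty)$ with $\psi^*(0)=0$.

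For the inequality ``$\geq$'', the case $\slope\calE tu=0$ is immediate, since choosing $v=u$ as competitor in \eqref{psi-MY} yields $\calY_\tau^\psi(\calE)(t,u)\le \calE(t,u)$. When $\slope\calE tu>0$, the definition \eqref{slope-INTRO} supplies a sequence $v_n\to u$ with $v_n\neq u$ and $(\calE(t,u)-\calE(t,v_n))/\mdn(u,v_n)\to \slope\calE tu$. For each $s>0$, I would set $\tau_n:=\mdn(u,v_n)/s$ (so $\tau_n\downarrow 0$) and use $v_n$ as a competitor in \eqref{psi-MY}, obtaining
\[
\frac{\calE(t,u)-\calY_{\tau_n}^\psi(\calE)(t,u)}{\tau_n}\ge s\cdot\frac{\calE(t,u)-\calE(t,v_n)}{\mdn(u,v_n)}-\psi(s).
\]
Passing to the limit gives $\limsup_{\tau\downarrow 0}[\calE(t,u)-\calY_\tau^\psi(\calE)(t,u)]/\tau \ge s\,\slope\calE tu-\psi(s)$, and taking the supremum over $s\ge 0$ delivers $\psi^*(\slope\calE tu)$.

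For the inequality ``$\le$'', the crucial preliminary step is to show that any choice of minimizers $v_\tau\in M_\tau^\psi(\calE)(t,u)$ satisfies $v_\tau\to u$ as $\tau\downarrow 0$. The estimate $\tau\psi(\mdn(u,v_\tau)/\tau)\le \calE(t,u)-\calE(t,v_\tau)$ (from inserting $v=u$ as competitor) combined with \eqref{Ezero} and a triangle-inequality bookkeeping for $\pertn(t,\cdot)$ keeps the family $(v_\tau)$ in a compact sublevel of $\pertn$ and keeps $\calE(t,v_\tau)$ bounded below; then the superlinearity of $\psi$ (specifically, the bound $\psi(r)\ge Mr-C_M$ for arbitrary $M>0$) yields $\limsup_{\tau\downarrow 0}\mdn(u,v_\tau)\le C/M$ for every $M$, so $\mdn(u,v_\tau)\to 0$. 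Once $v_\tau\to u$, I discard the trivial case $v_\tau=u$ (which contributes $0\le \psi^*(\slope\calE tu)$) and write, with $s_\tau:=\mdn(u,v_\tau)/\tau$,
\[
\frac{\calE(t,u)-\calY_\tau^\psi(\calE)(t,u)}{\tau}=\frac{(\calE(t,u)-\calE(t,v_\tau))_+}{\mdn(u,v_\tau)}\,s_\tau-\psi(s_\tau)\le \psi^*\!\left(\frac{(\calE(t,u)-\calE(t,v_\tau))_+}{\mdn(u,v_\tau)}\right),
\]
the last step by Fenchel--Young. Taking $\limsup_{\tau\downarrow 0}$ and exploiting the continuity and monotonicity of $\psi^*$ on $[0,\infty)$ together with \eqref{slope-INTRO} evaluated along $v_\tau\to u$ then closes the argument.

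The main technical obstacle is the qualitative convergence $v_\tau\to u$, which is the only point where the proof genuinely departs from the quadratic case and where one really exploits both the superlinearity of $\psi$ and the coercivity \eqref{Ezero}: it is needed in order to be entitled to invoke the slope (a $\limsup$ as $v\to u$) and to rule out that the minimizers drift away. Everything else is a direct consequence of the definition of the slope and of Fenchel--Young duality, and is therefore no more delicate than in the quadratic case \eqref{duality-quadratic}.
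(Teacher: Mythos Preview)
Your proof is correct and follows exactly the route the paper indicates: the paper does not spell out a proof but simply states that it ``follows by trivially adapting the argument for \cite[Lemma~3.1.5]{AGS08}'', and your two inequalities via competitor choice and Fenchel--Young, together with the convergence $v_\tau\to u$ of minimizers (obtained from superlinearity of $\psi$ and the coercivity \eqref{Ezero}), are precisely that adaptation. There is nothing to add.
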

%Moreover, if the infimum in the definition of $\calY_\tau^\psi(E)(u)$ is attained at some $u_\tau $ %\in M_\tau^\psi(E)(u)$,
% there exists a sequence $\tau_n\down 0$ such that\footnote{\RRR mi sembra che non  si possa dire di pi\`u... \EEE}
%\begin{equation}
%\label{more-duality}
%\begin{aligned}
%\psi^* \left(|\partial E|(u) \right) & =\lim_{n\to\infty} \left( \frac{E(u) - E(u_{\tau_n}) }{\tau_n} -  \psi \left( \frac{\md{u}{u_{\tau_n}}}{\tau_n} \right)  \right)
%\\
%& 
%=
%\lim_{n\to\infty} \left( \frac{\mdn(u,u_{\tau_n})}{\tau_n} |\partial E|(u) -  \psi \left( \frac{\md{u}{u_{\tau_n}}}{\tau_n} \right) \right)\,.
%\end{aligned}
%\end{equation}
%\end{lemma}
The \emph{proof} follows by trivially adapting the argument for 
  \cite[Lemma 3.1.5]{AGS08}.
 We conclude this Appendix with   %deduce 
the following lower semicontinuity result, which is crucially used in the proof of Proposition \ref{prop:real-tech-diff}. 
\begin{lemma}
\label{l:AGS-2}
Assume \eqref{Ezero}, \eqref{uscPower-bis}, and \eqref{superlinear-case}. 
Let $(\tau_k)_k \subset (0,\infty)$,  $(t_k)_k \subset [0,T]$,  and $(u_k)_k  \subset \Xs$ fulfill   $\tau_k\down 0$, $t_k \to t$, and $u_k\to u$ for some 
$(t,u) \in [0,T]\times \Xs$,  with $\sup_{k\in \N}  \calE(t_k,u_k) \leq C$. Then,
\begin{equation}
\label{lsc-formula}
\liminf_{k\to\infty} \frac{\calE(t_k,u_k) - \calY_{\tau_k}^\psi(\calE)(t_k,u_k)} {\tau_k}\geq \psi^* \left(|\mathrm{D} \calE|(t,u) \right). 
\end{equation}
\end{lemma}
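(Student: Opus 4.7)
My plan is to adapt the argument for the fixed-point duality formula \eqref{ext-duality} (Lemma \ref{l:AGS}) to the situation of varying base points and Moreau--Yosida parameters. Set $S := |\mathrm{D}\calE|(t, u)$. If $S \le \psi'(0^+)$, then $\psi^*(S) = 0$ and \eqref{lsc-formula} is trivial (the LHS is nonnegative since $\calY_{\tau_k}^\psi(\calE)(t_k, u_k) \le \calE(t_k, u_k)$ by taking $v = u_k$ in the defining infimum). Assume therefore $\psi^*(S) > 0$. Since $\psi^*$ is continuous and nondecreasing on $[0, \infty)$ (as the Legendre conjugate of the superlinear convex $\psi$), it suffices to show, for each $R > 0$ and $\sigma \in (\psi'(0^+), S)$, that the $\liminf$ in \eqref{lsc-formula} is at least $R\sigma - \psi(R)$; taking the supremum over $R$ gives $\psi^*(\sigma)$, and letting $\sigma \uparrow S$ yields $\psi^*(S)$.

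The core step is to produce suitable test points. By the slope characterization $S = \inf_{\delta > 0} \sup\{(\calE(t,u) - \calE(t,v))_+/\md{u}{v} : 0 < \md{u}{v} < \delta\}$, for each $k$ there exists $v_k \in \Xs$ with $0 < \md{u}{v_k} < R\tau_k$ and $(\calE(t,u) - \calE(t,v_k))/\md{u}{v_k} > \sigma$; in particular $v_k \to u$. Plugging $v = v_k$ into the formula defining $\calY_{\tau_k}^\psi(\calE)(t_k, u_k)$ gives
\[
\frac{\calE(t_k, u_k) - \calY_{\tau_k}^\psi(\calE)(t_k, u_k)}{\tau_k} \geq \frac{\calE(t_k, u_k) - \calE(t_k, v_k)}{\tau_k} - \psi\!\left(\frac{\md{u_k}{v_k}}{\tau_k}\right).
\]
I estimate the numerator by the decomposition $\calE(t_k, u_k) - \calE(t_k, v_k) = [\calE(t_k, u_k){-}\calE(t, u_k)] + [\calE(t, u_k){-}\calE(t, u)] + [\calE(t, u){-}\calE(t, v_k)] + [\calE(t, v_k){-}\calE(t_k, v_k)]$: the first and fourth brackets are $O(|t_k{-}t|)$ by the Lipschitz continuity of $t \mapsto \calE(t, w)$ from \eqref{Lip-cont-E}--\eqref{prop-pert} (uniformly in $w$ thanks to the energy bound); the second is at least $-\eta$ for every $\eta > 0$ and $k$ large by the lower semicontinuity \eqref{Ezero}; and the third exceeds $\sigma\md{u}{v_k}$ by construction. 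After extracting a subsequence along which $r_k := \md{u_k}{v_k}/\tau_k$ converges to some $r$, the estimate yields $\liminf \ge r\sigma - \psi(r)$.

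The main obstacle is scaling: the bound $r\sigma - \psi(r)$ attains $\psi^*(\sigma)$ only for the optimal $r^* \in \partial\psi^*(\sigma)$, so one must engineer the test points so that $r_k \to r^*$. In the favorable regime where $\rho_k := \md{u_k}{u}/\tau_k$ is bounded along a subsequence, one has $r_k \le \rho_k + R$ bounded, and, by combining the freedom to vary $R$ with the ability provided by the slope $\limsup$ at $(t,u)$ to take $v_k$ at arbitrarily small scales, a diagonal construction recovers the full supremum $\psi^*(\sigma) = \sup_r[r\sigma - \psi(r)]$. In the delicate regime $\rho_k \to \infty$, test points generated from the slope at $(t, u)$ no longer sit at scale $O(\tau_k)$ from $u_k$; here I switch to the slope at the moving base point $(t_k, u_k)$, using the lower semicontinuity of the slope from \eqref{uscPower-bis} (which applies under the energy bound together with an a~priori bound on $|\mathrm{D}\calE|(t_k, u_k)$ obtained, after a further subsequence, from the slope--MY inequality of Lemma \ref{l:AGS}) to conclude $\liminf_k |\mathrm{D}\calE|(t_k, u_k) \ge S$, and then selecting $w_k$ with $\md{u_k}{w_k} \leq R\tau_k$ and slope ratio close to $|\mathrm{D}\calE|(t_k, u_k)$ via the slope characterization at $(t_k, u_k)$. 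A subsequence bifurcation combining the two regimes completes the proof.
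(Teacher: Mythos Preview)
Your approach has a genuine gap that already breaks the ``favorable regime''. After inserting the competitor $v_k$ and decomposing $\calE(t_k,u_k)-\calE(t_k,v_k)$ into four brackets, you divide by $\tau_k$ and pass to the $\liminf$. But two of the brackets are \emph{not} of order $O(\tau_k)$: the time-shift terms are only $O(|t_k-t|)$, and the lemma imposes no relation whatsoever between $|t_k-t|$ and $\tau_k$; more seriously, for $\calE(t,u_k)-\calE(t,u)$ lower semicontinuity \eqref{Ezero} only yields $\ge -\eta$ for large $k$, so after dividing by $\tau_k$ this contributes $-\eta/\tau_k\to-\infty$. No quantitative rate is available here, and the bound $r\sigma-\psi(r)$ is never reached. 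The ``delicate regime'' has the same time-rate problem, and in addition your a~priori bound on $|\mathrm{D}\calE|(t_k,u_k)$ via Lemma~\ref{l:AGS} is not justified: that lemma concerns the limit $\tau\downarrow 0$ at a \emph{fixed} base point and says nothing about the single difference quotient at $\tau=\tau_k$. Finally, even modulo these issues, the slope is only a $\limsup$, so you cannot prescribe the scale $\md{u_k}{w_k}/\tau_k$ to hit the optimizer of $r\mapsto r\sigma-\psi(r)$; the claimed ``diagonal construction'' is not substantiated.

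The paper's proof sidesteps all of this by quoting an integral inequality (\cite[Lemma~4.5]{RMS08}) valid at each fixed $k$:
\[
\frac{\calE(t_k,u_k)-\calY_{\tau_k}^\psi(\calE)(t_k,u_k)}{\tau_k}\ \ge\ \frac1{\tau_k}\int_0^{\tau_k}\psi^*\bigl(|\mathrm{D}\calE|(t_k,u_r^k)\bigr)\dd r,
\]
with $u_r^k\in M_r^\psi(\calE)(t_k,u_k)$. Rescaling $r=s\tau_k$, one has $u_{s\tau_k}^k\to u$ for each $s\in(0,1)$, so the slope lower semicontinuity from \eqref{uscPower-bis} and Fatou give \eqref{lsc-formula} directly. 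No comparison of $\calE$ at $u_k$ versus $u$, and no control of $|t_k-t|/\tau_k$, is ever needed.
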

\begin{proof}
For every $k\in\N$, let  $u_{\tau_k}^k \in M_{\tau_k}^\psi(\calE)(t_k,u_k)$. We have that 
\[
\begin{aligned}
 \frac{\calE(t,u_k) - \calY_{\tau_k}^\psi(\calE)(t_k,u_k)} {\tau_k} & =  \frac{\calE(t_k,u_k) -  \calE(t_k,u_{\tau_k}^k) - \tau_k \psi \left( \frac{\displaystyle \md{u_k}{u_{\tau_k}^k}}{\displaystyle \tau_k} \right)  } {\tau_k}
 \\
  &  \geq  \frac1{\tau_k} \int_0^{\tau_k} \psi^*  \left(|\mathrm{D} \calE|(t_k,u_r^k) \right) \dd r, 
\end{aligned}
\]
where the latter  estimate  follows from \cite[Lemma 4.5]{RMS08}, with
 $u_r^k$ is a (measurable) selection in $M_{r}^\psi(\calE)(t_k,u_k)$ for  $r\in (0,\tau_k)$. Observe that  
 $\liminf_{k\to\infty}  \psi^*  \left(|\mathrm{D} \calE|(t_k,u_r^k) \right) \geq  \psi^*  \left(|\mathrm{D} \calE|(t,u) \right) $ taking into account that 
 $u_r^k\to u$ as $k\to\infty$  for every $r\in (0,\tau_k)$, cf.\ the proof of 
  \cite[Lemma 4.5]{RMS08},
  %\footnote{\RRR mi sembra che per questo serva che $\sup_{k\in \N}  \calE(t,u_k) \leq C$} 
  and using the lower semicontinuity of $|\mathrm{D}\calE|$ granted by \eqref{uscPower-bis}.  Then,  by Fatou's lemma we have 
  \[
  \liminf_{k\to\infty}  \frac1{\tau_k} \int_0^{\tau_k} \psi^*  \left(|\mathrm{D} \calE|(t_k,u_r^k) \right) \dd r \geq  \psi^* \left(|\mathrm{D} \calE|(t,u) \right),
  \]
  which concludes the proof of \eqref{lsc-formula}. 
\end{proof}

%\end{proof}

{\small 
\bibliographystyle{alpha}
\bibliography{ricky_lit}
}

\end{document}